\documentclass[12pt,a4paper]{amsart}

\usepackage{mathrsfs}
\usepackage{amssymb,amsmath,mathtools,amsthm,color}
\usepackage{graphicx,cite,txfonts}
\usepackage{url}
\usepackage{setspace}
\usepackage{enumerate}
\usepackage[headheight=110pt,top=1.4in,bottom=1.4in,left=1.0in,right=1.0in]{geometry}
\usepackage{tikz}
\usepackage{tikz-cd}
\usetikzlibrary{matrix}
\usepackage{hyperref}
\hypersetup{colorlinks,linkcolor={red},citecolor={red},urlcolor={blue}}

\newtheorem{theorem}{Theorem}[section]

\newtheorem{proof of lemma}[theorem]{Proof of Lemma}
\newtheorem{proposition}[theorem]{Proposition}

\newtheorem{corollary}[theorem]{Corollary}

\theoremstyle{definition}
\newtheorem{definition}[theorem]{Definition}

\newtheorem{remark}[theorem]{Remark}

\numberwithin{equation}{section}
\makeatletter
\@namedef{subjclassname@2020}{\textup{2020} Mathematics Subject Classification}
\makeatother

\newcommand{\bigslant}[2]{{\raisebox{.2em}{$#1$}\left/\raisebox{-.2em}{$#2$}\right.}}

\newcommand{\Lip}{\operatorname{Lip}}

\newcommand{\BLip}{\operatorname{BLip}}
\newcommand{\BLipz}{\operatorname{BLip}_0}
\newcommand{\Blin}{\operatorname{Blin}}
\newcommand{\Fix}{\operatorname{Fix}}
\newcommand{\spancl}{\overline{\operatorname{span}}}
\newcommand{\F}{\mathcal{F}}

\newcommand{\whot}{\widehat{\otimes}_{\pi}}

\title{Group Actions, Fixed Points and Orbit Quotients in Lipschitz Spaces}

\author[Arindam Mandal]{Arindam Mandal$^\dagger$}
	\address{Arindam Mandal, School of Mathematical Sciences, National Institute of Science Education and Research
		Bhubaneswar, An OCC of Homi Bhabha National Institute, P.O. Jatni, Khurda, Odisha 752050,
		India.}
	\email{arindam.mandal@niser.ac.in}

\thanks{$\dagger$ \texttt{Corresponding author}}
\subjclass[2020]{Primary 46B20; Secondary 58D19, 47H10}
\keywords{Lipschitz map, Lipschitz-free space, Group actions by similarities, Character-equivariant Lipschitz mappings, Amenable averaging, Orbit difference subspace}

\date{}

\begin{document}

\baselineskip=17pt
\begin{abstract}
We study Lipschitz mappings associated with group actions by similarities, extending the orbit-quotient approach for point-preserving isometric actions. Normalizing by the similarity ratios we obtain isometric representations whose fixed points are character-equivariant Lipschitz mappings. The corresponding quotients by closed spans of orbit differences provide canonical preduals and norm-preserving linearizations, without an amenability assumption. 

Applications include positively homogeneous, linear, and bilinear mappings. In particular, we prove that the space of positively homogeneous Lipschitz functions form a $1$-complemented subspace of the Lipschitz space (strenghthening the earlier existed result) and interpret existing projections onto linear and bilinear mappings as invariant-mean averages. For amenable groups, the same approach yields isometric realizations of the quotient spaces in the biduals of the original linearization spaces.
\end{abstract}

\maketitle

%==============
\section{Introduction}
The motivation of this work comes from the interplay between the algebraic structure of group actions and the analytic properties of Lipschitz spaces. A Lipschitz map between metric spaces, is a map for which the distance between any two images is bounded by a fixed constant times the distance between the corresponding points.
Lipschitz theory (in particular, Lipschitz-free space) provides a linear framework for studying
the geometry of metric spaces and the structure of Lipschitz
mappings. 

For a pointed metric space $(M,d,0)$, the space
$\Lip_0(M)$ of real-valued Lipschitz functions vanishing at
the distinguished point is canonically the dual of the
Lipschitz-free space $\F(M)$. Every origin-preserving
Lipschitz mapping $f \in \Lip_0(M,E)$ from $M$ into a Banach space $E$ admits a
unique bounded linear extension $\widehat{f} \in \mathcal{L}(\F(M),E)$ (called as linearization of the Lipschitz map $f$) through the canonical
embedding $\delta_M\colon M\to\F(M);~~x \mapsto \delta_x \in \Lip_0(M)^*$, with preservation of
the norm. This correspondence allows conditions on nonlinear
mappings to be expressed as linear relations in $\F(M)$;
see \cite{LFBS,LA} for background. Here, we note that the Lipschitz space
$\Lip_0(M,E)$ forms a Banach space with respect to the Lipschitz norm
$$
\Lip(f)=\sup_{\substack{x,y\in M\\x\neq y}}\frac{\|f(x)-f(y)\|}{d(x,y)}.$$

Group actions give a natural source of such relations.
For an isometric action on $M$, invariance of a Lipschitz
function is expressed by the vanishing of its linearization
on the vectors $\delta_{gx}-\delta_x$. C\'uth and Doucha
\cite{PILFSIBGA} developed this observation in their study of
projections induced by group actions. They identified the
free space over the metric orbit quotient with a quotient
of $\F(M)$ by the closed span of orbit differences (see \cite[Lemma~4.3]{PILFSIBGA}), and
used amenable averaging to obtain bidual realizations and,
under additional hypotheses, complemented embeddings (see \cite[Theorem~4.5]{PILFSIBGA}).
Their results connect the geometry of metric orbit spaces
with the linear structure of Lipschitz-free spaces.

Building on the classical notions of equivariance, invariants, and coinvariants, we develop a group-action framework for studying distinguished spaces of Lipschitz mappings and their associated linearization spaces, with applications to complementability and bidual realizations. Our notion of equivariance follows the standard formulation for mappings between spaces equipped with group actions. In the Lipschitz setting, this formulation appears, for example, in Gu\'eritaud and Kassel's \emph{Maximally stretched laminations on geometrically finite hyperbolic manifolds} \cite[p.~694]{MSLOGFHM}, where equivariance is defined relative to prescribed representations on the domain and codomain.

Starting from the isometric group actions considered by C\'uth and Doucha \cite{PILFSIBGA}, we consider point-preserving actions that also allow changes of scale. More precisely, we replace isometries by similarities, that is, bijections that multiply all distances by a fixed positive factor. If a group $G$ acts by similarities on a nontrivial pointed metric space $(M,d,0)$ and fixes $0$, then each $g\in G$ determines a unique $\chi(g)>0$ such that
$d(gx,gy)=\chi(g)d(x,y),$ for any $ x,y\in M$.
Composition gives $\chi(gh)=\chi(g)\chi(h)$, so these scaling factors form a positive character $\chi\colon G\to(0,\infty)$, called the similarity character of the action. The choice $\chi\equiv1$ recovers the point-preserving isometric case of \cite{PILFSIBGA}. A basic example beyond that case is the dilation action of $(0,\infty)$ on a nonzero Banach space, for which $\chi(r)=r$.

For each $g\in G$, the number $\chi(g)$ is the similarity ratio of the map $x\mapsto gx$. To reflect this scaling in the codomain, we let $G$ act on $E$ by $g\cdot u=\chi(g)u$. Since $\chi$ is a character, this defines the scalar representation $g\mapsto\chi(g)I_E$. Equivariance with respect to these domain and codomain actions is therefore expressed by
$f(gx)=\chi(g)f(x)$, for any $ g\in G,\ x\in M$,
which motivates Definition~\ref{def:chi-equivariant}.

These actions also induce operators on the space of Lipschitz mappings. For each $g\in G$, define
$U_g\colon\Lip_0(M,E)\longrightarrow\Lip_0(M,E)$ such that $
(U_gf)(x)=\chi(g)f(g^{-1}x)$.
Here precomposition by $g^{-1}$ scales distances in the domain by $\chi(g)^{-1}$, while multiplication by $\chi(g)$ compensates for this change. Consequently, $\Lip(U_gf)=\Lip(f)$, and the family $(U_g)_{g\in G}$ forms a representation by surjective linear isometries. Moreover, $U_gf=f$ for every $g\in G$ precisely when $f$ is character-equivariant. Thus the equivariance condition becomes a fixed-point condition on the Lipschitz space.

The quotient construction has a parallel algebraic
motivation. For a representation of $G$ on a vector
space $X$, the space of coinvariants is obtained by
factoring out the linear span of the differences
$gv-v$. We refer to Tom Dieck's
\emph{Representation Theory}
\cite[\S\S7.6.3--7.6.4]{tomDieck} for invariants and
coinvariants, and to Meir's
\emph{Universal Rings of Invariants}
\cite[discussion preceding Lemma~2.2]{UROI}
for the same coinvariant construction.
We use its norm-closed version for the normalized
representation on the Lipschitz-free space. Taking
the norm closure ensures that the resulting quotient
is a Banach space; its dual is precisely the space
of character-equivariant Lipschitz functions.

These constructions therefore specialize standard
algebraic notions to the metric and Banach space
settings. The underlying group-theoretic terminology,
similarity actions, and character-equivariance are
developed in
Subsections~\ref{subsec: group theo notion},
\ref{subsec:act by sim}, and
\ref{subsec: cha equ Lip map}, respectively.
The subsequent sections establish their linearization
properties and apply them to positively homogeneous,
linear, and bilinear mappings.
We then consider recentered
translations, whose fixed points are bounded linear
mappings, and their two-variable counterparts, whose fixed
points are bounded bilinear mappings. The common mechanism
is the passage from an isometric representation on a
linearization space to its dual fixed-point space and
its Banach space of coinvariants.

\noindent\textbf{Notation.}
Unless otherwise stated, $(M,d,0)$ denotes a pointed metric space, and all Banach spaces $X,Y,E$ are over the real field. By $X^*$ we consider the Banach dual of $X$. We denote by $\ell_\infty(M)$ the Banach space of bounded real-valued functions on $M$, equipped with the supremum norm. The groups of surjective isometries of $M$ and surjective linear isometries of $X$ are denoted by $\operatorname{Isom}(M)$ and $\operatorname{Isom}_{\mathrm{lin}}(X)$, respectively. Group actions on pointed metric spaces are assumed to fix the distinguished point unless explicitly stated otherwise. The symbol $\cong, \equiv$ denotes a linear isometric isomorphism and equivalence, respectively. We write $X\widehat{\otimes}_\pi Y$ for the completed projective tensor product of $X$ and $Y$, and $\mathcal L(X,Y)$ for the Banach space of bounded linear operators from $X$ to $Y$, equipped with the operator norm. For the basics on tensor product theory we refer \cite{RAR} Finally, $I_X$ denotes the identity operator on $X$, and $GL(X)$ denotes the group of bounded linear bijections from $X$ onto itself with bounded inverses. For any abelian group $G$, $~m_r$ means applying the invariant mean $m \in \ell_{\infty}(G)^{*}$ to a bounded function of the variable $r$. For details on invariant mean we refer \cite{GNFA}. Further notation is introduced as needed.
\subsection{Main results}
\label{subsec:main-results}
We now state the main results of this article; their detailed formulations and proofs are given in the subsequent sections, beginning
with the linearization theorem for character-equivariant
Lipschitz mappings.

Let a group $G$ acts on a nontrivial pointed metric space
$(M,d,0)$ by similarities fixing $0$, and let
$\chi\colon G\to(0,\infty)$ be the associated
similarity character. Thus
$d(gx,gy)=\chi(g)d(x,y)$ for all $g\in G,\ x,y\in M$.
For a real Banach space $E$, we consider
\[
\Lip_0^{G,\chi}(M,E)
=
\left\{
f\in\Lip_0(M,E):
f(gx)=\chi(g)f(x)
\text{ for all }g\in G,\ x\in M
\right\}.
\]
In the scalar-valued case, we abbreviate this space
to $\Lip_0^{G,\chi}(M)$.

Again, we define
$\mathcal Y_{G,\chi}
=
\overline{\operatorname{span}}
\{\delta_{gx}-\chi(g)\delta_x:
g\in G,\ x\in M\}
\subseteq\F(M)$,
where the closure is taken in norm, and set
$\F^{G,\chi}(M)=\F(M)/\mathcal Y_{G,\chi}$.
Let $q_{G,\chi}\colon\F(M)\to\F^{G,\chi}(M)$
denote the quotient map.

\begin{theorem}\label{thm:main-equivariant}
With the preceding notation, the following assertions hold.
\begin{enumerate}
\item[(i)] For each $g\in G$, the formula
$(U_gf)(x)=\chi(g)f(g^{-1}x)$, for $f\in\Lip_0(M),\ x\in M$,
defines a surjective linear isometry $U_g\colon\Lip_0(M)\to\Lip_0(M)$. There also exists a unique surjective linear isometry $V_g\colon\F(M)\to\F(M)$ satisfying
$V_g\delta_x=\chi(g)^{-1}\delta_{gx}$, for any $ x\in M$.
These operators satisfy
$U_e=I_{\Lip_0(M)}, ~V_e=I_{\F(M)},~ U_gU_h=U_{gh}, ~V_gV_h=V_{gh}$
for all $g,h\in G$, where $e$ is the identity of $G$. Under the canonical identification $\F(M)^*=\Lip_0(M)$,
for any $g\in G$, we have
$$U_g=V_{g^{-1}}^* ~\mbox{and}~\Fix(U):=\{f\in\Lip_0(M):U_gf=f\text{ for every }g\in G\}=\Lip_0^{G,\chi}(M).$$

\item[(ii)] The subspace $\mathcal Y_{G,\chi}$ is precisely the \emph{orbit-difference subspace} associated with $V=(V_g)_{g \in G}$ satisfies
$\mathcal Y_{G,\chi}=\overline{\operatorname{span}}\{V_g\mu-\mu:g\in G,\ \mu\in\F(M)\}$,
where the closure is taken in the norm of $\F(M)$. Its annihilator is
\[
\mathcal Y_{G,\chi}^{\perp}:=\{f\in\Lip_0(M):\langle\mu,f\rangle=0\text{ for every }\mu\in\mathcal Y_{G,\chi}\}=\Lip_0^{G,\chi}(M).
\]
In particular, the adjoint of the quotient map,
$q_{G,\chi}^*\colon\bigl(\F^{G,\chi}(M)\bigr)^*\longrightarrow\Lip_0(M)$,
is a linear isometry with range $\Lip_0^{G,\chi}(M)$, given explicitly by
\[
(q_{G,\chi}^*\varphi)(x)=\varphi\bigl(q_{G,\chi}(\delta_x)\bigr), \qquad \varphi\in\bigl(\F^{G,\chi}(M)\bigr)^*,\ x\in M.
\]

\item[(iii)] The orbit quotient $\F^{G,\chi}(M)$ has the following universal property. For every real Banach space $E$ and every $f\in\Lip_0^{G,\chi}(M,E)$, there exists a unique bounded linear operator $\widetilde f\colon\F^{G,\chi}(M)\to E$ such that
$\widetilde f\bigl(q_{G,\chi}(\delta_x)\bigr)=f(x)$ for any $ x\in M$.
If $\widehat f\colon\F(M)\to E$ denotes the canonical linearization of $f$, then
\[
\widehat f=\widetilde f\circ q_{G,\chi}, \qquad \|\widetilde f\|=\|\widehat f\|=\Lip(f).
\]
Moreover, the map
$\Lambda_E\colon\Lip_0^{G,\chi}(M,E)\longrightarrow\mathcal L(\F^{G,\chi}(M),E),~~\Lambda_E(f)=\widetilde f$,
is a surjective linear isometry, whose inverse is
$\Lambda_E^{-1}(T)=T\circ q_{G,\chi}\circ\delta_M, $ for any $T\in\mathcal L(\F^{G,\chi}(M),E)$.
\end{enumerate}
\end{theorem}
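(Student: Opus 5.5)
For (i), I would first check that $U_g$ is well defined and isometric. Given $f\in\Lip_0(M)$, we have $(U_gf)(0)=\chi(g)f(g^{-1}0)=\chi(g)f(0)=0$. Since $d(g^{-1}x,g^{-1}y)=\chi(g)^{-1}d(x,y)$,
\[
|(U_gf)(x)-(U_gf)(y)|\le \chi(g)\Lip(f)\,\chi(g)^{-1}d(x,y),
\]
so $\Lip(U_gf)\le\Lip(f)$. Linearity is clear. A direct computation gives $U_gU_h=U_{gh}$ by multiplicativity of $\chi$, and $U_e=I$. Hence $U_{g^{-1}}$ inverts $U_g$, which yields surjectivity and equality of norms.

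For $V_g$, I would apply the linearization property of $\F(M)$ to the map $x\mapsto\chi(g)^{-1}\delta_{gx}$, which is $0$-preserving and satisfies
\[
\|\chi(g)^{-1}(\delta_{gx}-\delta_{gy})\|=\chi(g)^{-1}d(gx,gy)=d(x,y).
\]
This gives a unique bounded linear $V_g$ of norm $\le1$. Uniqueness on the dense span of the $\delta_x$ yields $V_gV_h=V_{gh}$ and $V_e=I$, so $V_g$ is a surjective isometry. The duality $U_g=V_{g^{-1}}^*$ I would check on molecules:
\[
\langle V_{g^{-1}}\delta_x,f\rangle=\chi(g^{-1})^{-1}f(g^{-1}x)=(U_gf)(x),
\]
and then extend by density and continuity. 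The fixed-point identity is immediate: $U_gf=f$ for all $g$ means $f(x)=\chi(g)f(g^{-1}x)$ for all $g$; replacing $g$ by $g^{-1}$ and $x$ by $gx$ converts this to $f(gx)=\chi(g)f(x)$.

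For (ii), note that $V_g\delta_x-\delta_x=\chi(g)^{-1}(\delta_{gx}-\chi(g)\delta_x)$. This gives $\mathcal Y_{G,\chi}\subseteq\overline{\operatorname{span}}\{V_g\mu-\mu\}$. For the reverse inclusion, $\mu\mapsto V_g\mu-\mu$ is linear and continuous, so it maps $\overline{\operatorname{span}}\{\delta_x\}=\F(M)$ into the closed span of the $V_g\delta_x-\delta_x$. For the annihilator, $f\perp\mathcal Y_{G,\chi}$ if and only if $f(gx)=\chi(g)f(x)$ for all $g,x$, evaluating on the generators (closure and span do not change annihilators). The statement about $q_{G,\chi}^*$ is then the standard Banach-space duality $(X/Y)^*\cong Y^\perp$ via the adjoint of the quotient map, with formula $\langle\delta_x,q^*\varphi\rangle=\varphi(q\delta_x)$.

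For (iii), I would take $f\in\Lip_0^{G,\chi}(M,E)$ and its linearization $\widehat f$, with $\|\widehat f\|=\Lip(f)$. Then $\widehat f(\delta_{gx}-\chi(g)\delta_x)=f(gx)-\chi(g)f(x)=0$, so by continuity $\widehat f$ vanishes on $\mathcal Y_{G,\chi}$. It therefore factors through the quotient as $\widetilde f\circ q_{G,\chi}$, with $\|\widetilde f\|=\|\widehat f\|$ because $q$ maps the open unit ball onto the open unit ball. Uniqueness holds since the $q(\delta_x)$ span a dense subspace.

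The main point needing care is that $\Lambda_E$ is surjective. Given $T$, set $f=T\circ q\circ\delta_M$. Then $f$ is Lipschitz with $f(0)=0$, and $f(gx)-\chi(g)f(x)=Tq(\delta_{gx}-\chi(g)\delta_x)=0$, so $f\in\Lip_0^{G,\chi}(M,E)$. Moreover $\widetilde f$ agrees with $T$ on a dense set, hence $\widetilde f=T$. Linearity of $\Lambda_E$ is clear, and the norm equality has already been shown, which completes the argument.
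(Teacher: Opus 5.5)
Your proposal is correct and follows essentially the same route as the paper, which assembles the theorem from Propositions~\ref{prop:Ug-isometry}, \ref{prop:chi-fixed-points}, \ref{prop:Vg-action}, Theorem~\ref{thm:annihilator-character}, the quotient duality $(\F(M)/\mathcal Y_{G,\chi})^*\cong\mathcal Y_{G,\chi}^\perp$, and Theorem~\ref{thm:universal-character}. Your local variations are all sound and, if anything, slightly more careful than the paper: you deduce that $V_g$ is an isometry from $\|V_g\|\le 1$ together with the contractive inverse $V_{g^{-1}}$; you obtain $\|\widetilde f\|=\|\widehat f\|$ because $q_{G,\chi}$ maps the open unit ball onto the open unit ball, rather than from the paper's pair of mutually inverse contractions $f\mapsto\widetilde f$, $T\mapsto f_T$; and you explicitly check that the generators $\delta_{gx}-\chi(g)\delta_x$ and the orbit differences $V_g\mu-\mu$ have the same closed span (one small note: a single substitution, $g\mapsto g^{-1}$ or alternatively $x\mapsto gx$, suffices in the fixed-point step).
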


The normalization by $\chi$ compensates for the change
of scale in the original metric action and produces
isometric representations on the associated Banach
spaces. The theorem then identifies
$\F^{G,\chi}(M)$ as a canonical predual of the
scalar-valued character equivariant Lipschitz space and as
a norm-preserving linearization space for its
Banach-valued counterpart.

When $\chi\equiv1$, the defining relations reduce
to $\delta_{gx}-\delta_x$, and the construction
recovers the point-preserving isometric case of
the metric orbit-quotient framework in
\cite{PILFSIBGA}. No amenability assumption is
needed for Theorem~\ref{thm:main-equivariant};
amenability enters subsequently in the construction
of averaging projections and bidual realizations.

One of the main applications concerns positively homogeneous
Lipschitz mappings on a real Banach space $X$. The
multiplicative group $(0,\infty)$ acts on $X$ by
dilations, with similarity character $\chi(r)=r$.
Consequently,
\[
\Lip_0^{ph}(X)
=
\Fix\left(
f\longmapsto r f(\,\cdot\,/r)
\right)_{r>0}, ~\mbox{and}~
\F^{ph}(X)
\cong
\F(X)\big/
\overline{\operatorname{span}}
\{\delta_{rx}-r\delta_x:r>0,\ x\in X\}.
\]
This identifies the positively homogeneous free space
studied in \cite{AM2} as a space of coinvariants
for the normalized dilation representation.

Amenability of $(0,\infty)$ further yields a norm-one
projection
$P_{ph}\colon\Lip_0(X)\longrightarrow\Lip_0^{ph}(X)$
for every nonzero real Banach space $X$. Explicitly,
$(P_{ph}f)(x)
=
m_r\left[r f\left(\frac{x}{r}\right)\right]$
where $m$ is an invariant mean on
$\ell_\infty((0,\infty))$.
This strengthens the complementability result of
\cite[Proposition~4.2.5]{MandalThesis}, where a radial
extension projection was constructed with norm at most
$3$. The averaging projection is distinct from
the radial extension projection.

The translation applications require a different
normalization because translations do not preserve
the distinguished point. On $\Lip_0(X,E)$, we use
the recentered operators
$(\rho_af)(x)=f(x+a)-f(a)$.
Their fixed points are precisely the bounded linear
mappings. On $\F(X)$, the corresponding operators are
$S_a\delta_x=\delta_{x+a}-\delta_a$
and their orbit differences generate the kernel of
the barycentre (linear) map $\beta_X$, a linear left inverse of $\delta_X$:
\[
\overline{\operatorname{span}}
\{\delta_{x+a}-\delta_x-\delta_a:x,a\in X\}
=
\ker(\beta_X).
\]
The resulting coinvariant space is therefore
canonically isometric to $X$. In this formulation,
the projection onto $\mathcal L(X,E)$ constructed
in \cite{AM3}, for dual ranges $E$, is invariant-mean
averaging of the recentered translation orbit.

The same construction extends to two-Lipschitz
mappings vanishing on the coordinate axes. For real
Banach spaces $X$, $Y$, and $E$, the representation
\[
(\rho_{(a,b)}T)(x,y)
=T(x+a,y+b)-T(x+a,b)-T(a,y+b)+T(a,b)
\]
acts isometrically on $\BLipz(X,Y;E)$, and
$\Fix(\rho)=\Blin(X,Y;E)$.
Its predual counterpart acts on
$\F(X)\widehat\otimes_\pi\F(Y)$.
We identify the closed span $\mathcal Z_{X,Y}$ of
its orbit differences as
\[
\mathcal Z_{X,Y}
=
\ker(\beta_X\widehat\otimes_\pi\beta_Y),~\mbox{and hence obtain}~
\bigl(\F(X)\widehat\otimes_\pi\F(Y)\bigr)/
\mathcal Z_{X,Y}
\cong
X\widehat\otimes_\pi Y
\]
linearly isometrically. This gives a translation-based
description of the projective tensor product and
interprets the projection of \cite{AM1} as averaging
onto the bilinear fixed-point space.

Finally, we apply the bidual averaging construction
of C\'uth and Doucha \cite[Lemma~3.2]{PILFSIBGA} to these
representations. For an amenable discrete group acting
isometrically on a Banach space $X$, the corresponding
coinvariant space admits an isometric embedding into
$X^{**}$, and the dual fixed-point space is contractively
complemented in $X^*$. This gives, in particular,
isometric embeddings
\[
\F^{ph}(X)\longrightarrow\F(X)^{**},
\qquad
X\widehat\otimes_\pi Y
\longrightarrow
\bigl(\F(X)\widehat\otimes_\pi\F(Y)\bigr)^{**}.
\]
For strongly continuous compact-group representations,
Haar averaging takes values in the original space and
identifies the coinvariant space with a contractively
complemented fixed-point subspace. Throughout, we
distinguish these complemented realizations from the
quotient identifications, which hold independently
of amenability.
%================================================
\section{Preliminaries}
\subsection{Group-theoretic notions} \label{subsec: group theo notion}

We begin by recalling some standard terminology concerning group actions;
see \cite[Definition~2.2.1]{DAS}, \cite{tomDieck}. Let $G$ be a group with identity
element $e$, and let $S$ be a nonempty set. An action of $G$ on $S$ is
determined by a group homomorphism
$\rho\colon G\longrightarrow \operatorname{Sym}(S)$,
where $\operatorname{Sym}(S)$ denotes the group of all bijections of
$S$. As usual, we write
$gx:=\rho(g)(x)$, for all $g\in G,\ x\in S$.
A set equipped with such an action is called a $G$-set. If $M$ and $N$
are $G$-sets, a map $f\colon M\to N$ is said to be $G$-equivariant if for all $g\in G,\ x\in M, ~ ~f(gx)=gf(x)$.

In particular, suppose that $M$ and $N$ are metric spaces and that $G$
acts on them through homomorphisms
\[
\alpha\colon G\longrightarrow \operatorname{Isom}(M)
\quad\text{and}\quad
\beta\colon G\longrightarrow \operatorname{Isom}(N).
\]
Then a map $f\colon M\to N$ is $(\alpha,\beta)$-equivariant precisely
when
$f(\alpha(g)x)=\beta(g)f(x)$ for all $ g\in G,\ x\in M$.
This point of view is standard in the study of equivariant Lipschitz
mappings; see, for example, \cite[p.~694]{MSLOGFHM}.

For our purposes, we shall focus on a particular class of group actions,
namely actions by similarities. The preceding formulation of
$(\alpha,\beta)$-equivariance then provides a natural motivation for the
notion of a \emph{character-equivariant Lipschitz mapping}, introduced
in Definition~\ref{def:chi-equivariant}. As will be seen throughout the paper,
this notion provides a common framework for several distinguished
classes of Lipschitz mappings.

\subsection{Group actions by similarities} \label{subsec:act by sim}

Let $(M,d)$ be a nontrivial metric space. We denote by
$\operatorname{Sim}(M)$ the group of all similarities of $M$, that is,
\[
\operatorname{Sim}(M)
:=
\left\{
T\colon M\to M \text{ bijective} :
d(Tx,Ty)=c_Td(x,y)
\text{ for some }c_T>0
\right\}.
\]
Since $M$ is nontrivial, the constant $c_T$ is uniquely determined by
$T$ and will be referred to as the \emph{similarity ratio} of $T$. The map
$\lambda\colon \operatorname{Sim}(M)\longrightarrow (0,\infty),~
\lambda(T)=c_T$,
is a group homomorphism. Indeed, for $S,T\in\operatorname{Sim}(M)$ and
$x,y\in M, ~ ~
d(STx,STy)
=
c_S d(Tx,Ty)
=
c_Sc_Td(x,y),$
and hence
\[
\lambda(ST)=\lambda(S)\lambda(T).
\]

Suppose now that $G$ acts on $M$ by similarities; equivalently, the
action is given by a homomorphism
$
\rho\colon G\longrightarrow \operatorname{Sim}(M)$.
The composition
$\chi:=\lambda\circ\rho\colon G\longrightarrow(0,\infty)$
is then a positive character of $G$, and $d(gx,gy)=\chi(g)d(x,y),$ for all $ g\in G,\ x,y\in M$.
We shall refer to $\chi$ as the \emph{similarity character} associated
with the action.

The usual isometric actions occur as the special case
$\chi\equiv 1$. Indeed, in this case $d(gx,gy)=d(x,y),$ for all $ g\in G,\ x,y\in M$,
so that the action takes values in $\operatorname{Isom}(M)$.

The preceding discussion may be summarized as follows.

\begin{proposition}\label{prop:similarity-character}
Let $(M,d)$ be a nontrivial metric space and let $G$ act on $M$. Then
the following assertions are equivalent:
\begin{itemize}
\item[(i)] the action of $G$ on $M$ is by similarities;
\item[(ii)] there exists a positive character
$\chi\colon G\longrightarrow(0,\infty)$
such that
\[
d(gx,gy)=\chi(g)d(x,y), ~\mbox{for any}~  g\in G,\ x,y\in M.
\]
\end{itemize}
Moreover, whenever these conditions hold, the character $\chi$ is
uniquely determined by the action.
\end{proposition}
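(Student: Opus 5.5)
The equivalence is essentially a repackaging of the discussion that precedes the statement, so I will prove the two implications directly and then prove uniqueness separately. The only input used throughout is that $M$ is nontrivial, so there exist points $x_0\neq y_0$ with $d(x_0,y_0)>0$.

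For (i)$\Rightarrow$(ii), let $\rho\colon G\to\operatorname{Sim}(M)$ be the homomorphism defining the action. First I check that the similarity ratio of $T\in\operatorname{Sim}(M)$ is well defined. If $d(Tx,Ty)=c\,d(x,y)=c'\,d(x,y)$ for all $x,y$, then evaluating at $x_0,y_0$ and dividing by $d(x_0,y_0)$ gives $c=c'$. Next, $\lambda\colon\operatorname{Sim}(M)\to(0,\infty)$ is multiplicative by the displayed computation $d(STx,STy)=c_Sc_T\,d(x,y)$ together with uniqueness of the ratio. I then set $\chi=\lambda\circ\rho$. As a composition of homomorphisms it is a positive character, and by definition of $\lambda$ it satisfies $d(gx,gy)=\chi(g)d(x,y)$.

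For (ii)$\Rightarrow$(i), each map $x\mapsto gx$ is already a bijection of $M$, since its inverse is $x\mapsto g^{-1}x$. It scales all distances by the positive constant $\chi(g)$, so it lies in $\operatorname{Sim}(M)$. Hence $\rho$ takes values in $\operatorname{Sim}(M)$, and it is a homomorphism because it is an action.

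For uniqueness, suppose $\chi$ and $\chi'$ both satisfy (ii). Fixing $g$ and evaluating at $x_0,y_0$ gives $\chi(g)d(x_0,y_0)=\chi'(g)d(x_0,y_0)$, so $\chi(g)=\chi'(g)$. Note that uniqueness only needs the scaling identity; the character property of $\chi'$ is not used.

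There is no real obstacle here. The one point needing care is the repeated use of nontriviality: it is exactly what makes the ratio, and hence $\lambda$ and $\chi$, well defined. Without it, on a one-point space, every positive constant would work.
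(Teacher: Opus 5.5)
Your proposal is correct and follows the paper's proof: (i)$\Rightarrow$(ii) via $\chi=\lambda\circ\rho$, (ii)$\Rightarrow$(i) by observing that each $x\mapsto gx$ is a bijection scaling distances by $\chi(g)$, and uniqueness by evaluating at two distinct points of $M$. The only difference is that you re-verify the well-definedness and multiplicativity of $\lambda$ inside the proof, whereas the paper establishes these in the discussion preceding the proposition.
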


\begin{proof}
Let the action is by similarities, and suppose
$\rho\colon G\to\operatorname{Sim}(M)$ denotes the corresponding
homomorphism. Then $\chi=\lambda\circ\rho$ is a positive character and,
by the definition of the similarity ratio, $d(gx,gy)=\chi(g)d(x,y)$ for every $g\in G$ and $x,y\in M$.

Conversely, suppose that such a positive character $\chi$ exists. Then for
each $g\in G$, the map $x\mapsto gx$ is a bijection and satisfies
$d(gx,gy)=\chi(g)d(x,y),$ for all $x,y\in M$.
Thus $g$ acts as a similarity with similarity ratio $\chi(g)$.
Uniqueness of $\chi$ follows from the assumption that $M$ contains at
least two distinct points.
\end{proof}

\noindent\textbf{Convention.}
Throughout the remainder of the paper, unless explicitly stated otherwise,
whenever a positive character $\chi\colon G\to(0,\infty)$ is considered
in connection with an action of $G$ on a metric space $(M,d)$, it is
understood that the action is by similarities and that $\chi$ is the
associated similarity character; that is,
$d(gx,gy)=\chi(g)d(x,y)$ for all $g\in G,~x,y\in M$.

\subsection{Character-equivariant Lipschitz mappings} \label{subsec: cha equ Lip map}

Let $(M,d,0)$ be a nontrivial pointed metric space and let $G$ act on
$M$ by similarities, fixing the distinguished point $0$. By
$\chi\colon G\longrightarrow(0,\infty)$ we denote the associated similarity character. Thus
$d(gx,gy)=\chi(g)d(x,y),
$ for all $g\in G,~ x,y\in M$.
In view of Proposition~\ref{prop:similarity-character}, the character
$\chi$ is uniquely determined by the action.

Let $E$ be a Banach space. The similarity character gives rise to the
natural scalar representation
$\rho_\chi\colon G\longrightarrow GL(E), ~~\rho_\chi(g)=\chi(g)I_E$.
It is therefore natural to consider mappings which are equivariant with
respect to the given similarity action of $G$ on $M$ and the
representation $\rho_\chi$ on $E$. The corresponding equivariance
condition is
$
f(gx)=\rho_\chi(g)f(x)=\chi(g)f(x),$ for all $ g\in G,~~x\in M$.
This motivates the following definition.

\begin{definition}\label{def:chi-equivariant}
A map
$f\in\Lip_0(M,E)$ is said to be \emph{$\chi$-equivariant} if
$f(gx)=\chi(g)f(x)$, for all $ g\in G,~ x\in M$.

We denote the space of all such mappings by
$\Lip_0^{G,\chi}(M,E)$.
In the scalar-valued case, we write simply $\Lip_0^{G,\chi}(M)
:=
\Lip_0^{G,\chi}(M,\mathbb R)$.
\end{definition}
Thus $\chi$-equivariance is precisely the usual equivariance condition
associated with the similarity action of $G$ on $M$ and the scalar
representation
$g\longmapsto \chi(g)I_E$
on $E$. In particular, if the action is isometric, then its similarity
character is the trivial character $\chi\equiv 1$, and the
$\chi$-equivariance condition reduces to
$f(gx)=f(x)$, for all $ g\in G,~ x\in M$.
In other words, $f$ is constant on the $G$-orbits of $M$. This is the
setting naturally associated with Lipschitz functions on the orbit
space $M/G$; see, for instance, \cite{PILFSIBGA}. Thus the case
$\chi\equiv 1$ may be regarded as the unweighted special case of the
character-equivariant framework introduced above.
\subsection{Lipschitz-free spaces}

Let $(M,d)$ be a metric space with distinguished point $0$ and $E$ be a Banach space.
We denote by $\operatorname{Lip}_0(M, Y)$ the Banach space of Lipschitz functions on $M$ vanishing at $0$, equipped
with the norm
$\Lip(f)
=
\sup_{ x\neq y}
\frac{\|f(x)-f(y)\|}{d(x,y)}.
$
For each $m\in M$, let $\delta_M(m)\in\operatorname{Lip}_0(M)^*$
be the evaluation functional given by
$ \delta_M(m)\left(f\right)=f(m)$ for all
$f\in\operatorname{Lip}_0(M)$.
Then the \emph{Lipschitz-free space} over $M$, also known as the
Arens--Eells space, is
\[
\mathcal{F}(M)
=
\overline{\operatorname{span}}
\{\delta_M(m):m\in M\}
\subseteq \operatorname{Lip}_0(M)^*,
\]
where the closure is taken in norm. The canonical map
$\delta_M\colon M\to\mathcal{F}(M)$ is an isometric embedding
with $\delta_M(0)=0$. Moreover, changing the distinguished point produces
a linearly isometric Lipschitz-free space.
The following universal property characterizes $\mathcal{F}(M)$
and expresses its role as a linearization of the pointed metric
space $M$.

\begin{theorem}\label{thm:free-universal-property} (\cite{LFBS,LA})
Let $(M,d)$ be a pointed metric space. For every Banach space
$E$ and every Lipschitz map $f\colon M\to E$ satisfying $f(0)=0$,
there exists a unique bounded linear operator
$\widehat{f}\colon\mathcal{F}(M)\to E$ such that
$\widehat{f}\circ\delta_M=f$.
Moreover,
$\|\widehat{f}\|=\operatorname{Lip}(f)$.
Together with the conditions that $\delta_M$ is an isometric
embedding, $\delta_M(0)=0$, and
$\overline{\operatorname{span}}~\delta_M(M)=\mathcal{F}(M)$,
this property determines the pair $(\mathcal{F}(M),\delta_M)$
uniquely up to a surjective linear isometry intertwining the
canonical embeddings.
\end{theorem}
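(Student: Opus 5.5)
The plan is to build $\widehat f$ by hand on the dense subspace $\operatorname{span}\,\delta_M(M)$, control its norm by duality with $\Lip_0(M)$, and then extend by continuity.

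\emph{Isometry of $\delta_M$.} I first check that $\delta_M$ is an isometric embedding with $\delta_M(0)=0$. The identity $\delta_M(0)=0$ holds because every $f\in\Lip_0(M)$ vanishes at $0$. For $x,y\in M$ and $\Lip(f)\le 1$,
$$|\langle\delta_x-\delta_y,f\rangle|=|f(x)-f(y)|\le d(x,y).$$
For the reverse inequality I test against $h(z)=d(z,y)-d(0,y)$. This function lies in $\Lip_0(M)$, has $\Lip(h)\le 1$, and satisfies $h(x)-h(y)=d(x,y)$. Hence $\|\delta_x-\delta_y\|=d(x,y)$.

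\emph{Construction and the upper bound.} Fix a Lipschitz $f\colon M\to E$ with $f(0)=0$. On $\operatorname{span}\,\delta_M(M)$ I set
$$\widehat f\Bigl(\sum_i a_i\delta_{x_i}\Bigr)=\sum_i a_i f(x_i).$$
The key step, and the only real point, is to show that this is well defined and bounded by $\Lip(f)$ simultaneously. Take $e^*\in E^*$ with $\|e^*\|\le1$. Then $e^*\circ f\in\Lip_0(M)$ and $\Lip(e^*\circ f)\le\Lip(f)$. Writing $\mu=\sum_i a_i\delta_{x_i}$, this gives
$$\Bigl|e^*\Bigl(\sum_i a_i f(x_i)\Bigr)\Bigr|=|\langle \mu, e^*\circ f\rangle|\le \Lip(f)\,\|\mu\|.$$
Taking the supremum over all such $e^*$ and using Hahn--Banach yields
$$\Bigl\|\sum_i a_i f(x_i)\Bigr\|\le \Lip(f)\,\|\mu\|.$$
In particular, if $\mu=0$ then the right-hand side vanishes. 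So the value of $\widehat f(\mu)$ does not depend on the representation of $\mu$, which gives well-definedness. The map is then a bounded linear operator on a dense subspace of $\F(M)$, so it extends uniquely to $\F(M)$ with $\|\widehat f\|\le\Lip(f)$.

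\emph{Lower bound and uniqueness.} For the reverse inequality, the isometry of $\delta_M$ gives
$$\|f(x)-f(y)\|=\|\widehat f(\delta_x-\delta_y)\|\le\|\widehat f\|\,d(x,y),$$
so $\Lip(f)\le\|\widehat f\|$. Uniqueness of $\widehat f$ is immediate: any two bounded operators that agree on $\delta_M(M)$ agree on its closed span, which is $\F(M)$.

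\emph{Uniqueness of the pair $(\F(M),\delta_M)$.} Suppose $(Z,j)$ is another pair with the same properties. Apply the universal property of $\F(M)$ to $j$ and that of $Z$ to $\delta_M$. Both maps have Lipschitz constant $1$, so this produces operators $A\colon\F(M)\to Z$ and $B\colon Z\to\F(M)$ of norm at most $1$, with $A\delta_M=j$ and $Bj=\delta_M$. The composites $BA$ and $AB$ fix the canonical points in each space. By density of the spans they are therefore the respective identities. Since $A$ and $B$ are mutually inverse contractions, both are surjective linear isometries intertwining the embeddings.

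I expect no serious obstacle. The only step needing care is the duality estimate above, since it delivers well-definedness and the norm bound at the same time.
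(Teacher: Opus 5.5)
Your proof is correct. The paper gives no proof of its own: it quotes the result from \cite{LFBS,LA} and refers to the literature for details. Your argument is the standard one found there: define $\widehat f$ on $\operatorname{span}\,\delta_M(M)$, get well-definedness and $\|\widehat f\|\le\Lip(f)$ together from the duality estimate with $e^*\circ f$, extend by density, and obtain uniqueness of the pair $(\F(M),\delta_M)$ from the two mutually inverse contractions $A$ and $B$.
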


Thus, the following diagram commutes:
\[
\begin{tikzcd}[column sep=large,row sep=large]
M \arrow[r,"f"] \arrow[dr,"\delta_M"']
  & E \\
  & \mathcal{F}(M) \arrow[u,"\widehat{f}"']
\end{tikzcd}
\]
For a proof and further background, see \cite[Section 2]{OTSOLFS}.

Applying Theorem~\ref{thm:free-universal-property} to scalar-valued
maps yields the canonical isometric identification
$\mathcal{F}(M)^*\cong\operatorname{Lip}_0(M)$.
In particular, $\mathcal{F}(M)$ is a canonical predual of
$\operatorname{Lip}_0(M)$. On norm-bounded subsets of
$\operatorname{Lip}_0(M)$, the associated weak$^*$ topology
coincides with the topology of pointwise convergence on $M$,
since $\operatorname{span}\delta_M(M)$ is dense in
$\mathcal{F}(M)$.

An alternative construction defines $\mathcal{F}(M)$ directly
through the Kantorovich--Rubinstein norm, without first introducing
$\operatorname{Lip}_0(M)$. This approach connects the theory of
Lipschitz-free spaces with optimal transport; see \cite{LA} for an
introduction.
\section{Character-equivariant Lipschitz mappings and their linearization}
\label{sec:character-equivariant}

In this section we develop the basic linearization theory associated with
character-equivariant Lipschitz mappings. We first realize these mappings
as the fixed points of a natural isometric representation on
$\Lip_0(M)$. We then pass to the corresponding action on the
Lipschitz-free space and identify the associated orbit-difference
subspace. This leads naturally to an equivariant Lipschitz-free space
and its universal property.

\subsection{The induced action on the Lipschitz space}
\label{subsec:action-Lip}

Let $(M,d,0)$ be a pointed metric space and let $G$ acts on $M$ by
similarities, fixing the distinguished point. Let
$\chi\colon G\to(0,\infty)$ denote the associated similarity character.
The action of $G$ on $M$ naturally induces an action on $\Lip_0(M)$.
For each $g\in G$, we define
\begin{equation}\label{eq:Ug}
U_g\colon \Lip_0(M)\longrightarrow \Lip_0(M), ~\mbox{by}~
(U_gf)(x)=\chi(g)f(g^{-1}x),
\end{equation}
for $f\in\Lip_0(M)$ and $x\in M$.

\begin{proposition}\label{prop:Ug-isometry}
For each $g\in G$, the operator $U_g$ defined by
\eqref{eq:Ug} is a surjective linear isometry on $\Lip_0(M)$.
Moreover,
$U_e=I_{\Lip_0(M)}~\text{and}~
U_gU_h=U_{gh}
~~\text{for all } g,h\in G$.
Consequently, the mapping
\[
U\colon G\longrightarrow
\operatorname{Isom}_{\mathrm{lin}}\bigl(\Lip_0(M)\bigr),
\qquad
g\longmapsto U_g,
\]
is a group homomorphism, and hence defines an isometric representation
of $G$ on $\Lip_0(M)$.
\end{proposition}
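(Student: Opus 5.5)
The plan is to verify the assertions directly from the definition \eqref{eq:Ug}, relying only on two facts: $g^{-1}$ fixes $0$, and $d(g^{-1}x,g^{-1}y)=\chi(g^{-1})d(x,y)=\chi(g)^{-1}d(x,y)$. The latter holds because $\chi$ is a character (Proposition~\ref{prop:similarity-character}), so $\chi(g^{-1})=\chi(g)^{-1}$.

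\emph{Well-definedness and linearity.} For $f\in\Lip_0(M)$ we have $(U_gf)(0)=\chi(g)f(g^{-1}0)=\chi(g)f(0)=0$. For $x\neq y$,
\[
|(U_gf)(x)-(U_gf)(y)|=\chi(g)\,|f(g^{-1}x)-f(g^{-1}y)|\le \chi(g)\Lip(f)\,d(g^{-1}x,g^{-1}y)=\Lip(f)\,d(x,y).
\]
Hence $U_gf\in\Lip_0(M)$ and $\Lip(U_gf)\le\Lip(f)$. Linearity in $f$ is immediate from the pointwise formula.

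\emph{Group law.} We have $(U_ef)(x)=\chi(e)f(x)=f(x)$, since $\chi(e)=1$. For the composition, $(U_g(U_hf))(x)=\chi(g)(U_hf)(g^{-1}x)=\chi(g)\chi(h)f(h^{-1}g^{-1}x)=\chi(gh)f((gh)^{-1}x)=(U_{gh}f)(x)$. Here we use $\chi(gh)=\chi(g)\chi(h)$ and $(gh)^{-1}=h^{-1}g^{-1}$, together with the fact that the action is a homomorphism into $\operatorname{Sym}(M)$.

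\emph{Isometry and surjectivity.} From the group law, $U_{g^{-1}}U_g=U_e=I=U_gU_{g^{-1}}$, so each $U_g$ is bijective with inverse $U_{g^{-1}}$. Applying the contraction estimate to $U_{g^{-1}}$ gives $\Lip(f)=\Lip(U_{g^{-1}}U_gf)\le\Lip(U_gf)$, and therefore $\Lip(U_gf)=\Lip(f)$. Thus $U_g$ is a surjective linear isometry, and $g\mapsto U_g$ is a homomorphism into $\operatorname{Isom}_{\mathrm{lin}}(\Lip_0(M))$.

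There is no real obstacle. The only point needing care is the order of composition in $U_gU_h=U_{gh}$, which is exactly why the definition uses $g^{-1}$ inside the argument of $f$.
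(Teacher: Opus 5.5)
Your proof is correct and follows essentially the same direct verification as the paper. The only minor difference is that you obtain $\Lip(U_gf)=\Lip(f)$ by combining the contraction estimate with the inverse $U_{g^{-1}}$, whereas the paper rewrites the difference quotient of $U_gf$ at $(x,y)$ as that of $f$ at $(g^{-1}x,g^{-1}y)$ and takes the supremum, implicitly using that $g^{-1}$ is a bijection of $M$.
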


\begin{proof}
For any $g \in G$ immediately it follows that each $U_g$ is linear. Since the distinguished point is fixed by
the action,
$(U_gf)(0)
=
\chi(g)f(g^{-1}0)
=
0$.
Moreover, by the similarity property,
\[
d(g^{-1}x,g^{-1}y)
=
\chi(g^{-1})d(x,y)
=
\frac{1}{\chi(g)}d(x,y).
\]
Hence, for $x\neq y$,
\[
\begin{aligned}
\frac{|(U_gf)(x)-(U_gf)(y)|}{d(x,y)}
&=
\chi(g)
\frac{|f(g^{-1}x)-f(g^{-1}y)|}{d(x,y)}
=
\frac{|f(g^{-1}x)-f(g^{-1}y)|}
     {d(g^{-1}x,g^{-1}y)}.
\end{aligned}
\]
Taking the supremum over $x\neq y$ we have
$\Lip(U_gf)=\Lip(f)$.
Thus $U_g$ is an isometry.

Finally, for $g,h\in G$,
\[
\begin{aligned}
(U_gU_hf)(x)
&=
\chi(g)\chi(h)
f(h^{-1}g^{-1}x)
=
\chi(gh)f((gh)^{-1}x)
=
(U_{gh}f)(x).
\end{aligned}
\]
Therefore $U_gU_h=U_{gh}$. In particular,
$U_{g^{-1}}=U_g^{-1}$, and hence each $U_g$ is surjective.
\end{proof}

The character-equivariant Lipschitz mappings introduced in
Definition~\ref{def:chi-equivariant} admit a natural fixed-point description
with respect to this representation.
\begin{proposition}\label{prop:chi-fixed-points}
Let
$U=(U_g)_{g\in G}$
be the isometric representation of $G$ on $\Lip_0(M)$ given by
Proposition~\ref{prop:Ug-isometry}. Then the space of
$\chi$-equivariant Lipschitz mappings coincides with the fixed-point
space of $U$; that is,
\[
\Lip_0^{G,\chi}(M)
=
\Fix(U)
:=
\left\{
f\in\Lip_0(M):
U_gf=f \ \text{for every } g\in G
\right\}.
\]
\end{proposition}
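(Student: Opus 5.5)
The plan is to prove the two inclusions directly from the definitions, using only the formula \eqref{eq:Ug} and the fact that $G$ is a group. Both $\Lip_0^{G,\chi}(M)$ and $\Fix(U)$ are subsets of $\Lip_0(M)$, so membership in $\Lip_0(M)$ needs no comment. The content is the equivalence, for a fixed $f\in\Lip_0(M)$, of the two pointwise conditions
\[
f(gx)=\chi(g)f(x)\quad\text{and}\quad \chi(g)f(g^{-1}x)=f(x)\qquad(g\in G,\ x\in M).
\]

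First suppose $f\in\Lip_0^{G,\chi}(M)$, and fix $g\in G$ and $x\in M$. I will apply the equivariance identity to the pair $(g,g^{-1}x)$ in place of $(g,x)$. This gives $f(x)=f(g\,g^{-1}x)=\chi(g)f(g^{-1}x)=(U_gf)(x)$. Hence $U_gf=f$ for every $g$, that is, $f\in\Fix(U)$.

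Conversely, suppose $U_gf=f$ for all $g\in G$. Fix $g$ and $x$, and apply the fixed-point identity with $g^{-1}$ in place of $g$, evaluated at $x$. This yields $f(x)=\chi(g^{-1})f(gx)$. Since $\chi$ is a character into the multiplicative group $(0,\infty)$, Proposition~\ref{prop:similarity-character} gives $\chi(g^{-1})=\chi(g)^{-1}$. Multiplying by $\chi(g)>0$ then gives $f(gx)=\chi(g)f(x)$, so $f\in\Lip_0^{G,\chi}(M)$.

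There is no real obstacle here. The only point requiring care is the substitution step, namely using quantification over all of $G$ to replace $g$ by $g^{-1}$ (or $x$ by $g^{-1}x$), together with the character identity $\chi(g^{-1})=\chi(g)^{-1}$, which follows from $\chi(e)=1$ and multiplicativity. Alternatively, one could observe that $U_gU_{g^{-1}}=I$ from Proposition~\ref{prop:Ug-isometry}, but the direct pointwise argument above suffices.
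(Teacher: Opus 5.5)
Your proof is correct and follows essentially the same route as the paper: both inclusions are checked pointwise by a single substitution in the defining identities. The only difference is which variable you substitute (you replace $x$ by $g^{-1}x$ in the forward direction and $g$ by $g^{-1}$ in the converse, while the paper uses $g^{-1}$ in place of $g$ and $gx$ in place of $x$, respectively), and either choice needs nothing beyond the group structure and $\chi(g^{-1})=\chi(g)^{-1}$.
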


\begin{proof}
Suppose $f\in\Lip_0^{G,\chi}(M)$. Then, for every
$g\in G$ and $x\in M$,
$$f(g^{-1}x)
=
\chi(g^{-1})f(x)
=
\frac{1}{\chi(g)}f(x).$$
Consequently,
$(U_gf)(x)
=
\chi(g)f(g^{-1}x)
=
f(x)$,
and hence $f\in\Fix(U)$.

Conversely, suppose that $U_gf=f$ for every $g\in G$. Then
$\chi(g)f(g^{-1}x)=f(x)$.
Now, replacing $x$ by $gx$ we get
$f(gx)=\chi(g)f(x)$,
and therefore $f\in\Lip_0^{G,\chi}(M)$.
\end{proof}

\begin{remark}\label{rem:trivial-character}
If $\chi\equiv1$, then the action of $G$ on $M$ is isometric and
$(U_gf)(x)=f(g^{-1}x)$.
In this case,
\[
\Lip_0^{G,\chi}(M)
=
\{f\in\Lip_0(M):f(gx)=f(x)
\text{ for all }g\in G,\ x\in M\}.
\]
Thus $f$ is constant on the $G$-orbits of $M$, and the preceding
construction reduces to the setting naturally associated with
Lipschitz functions on the orbit space $M/G$ (see \cite{PILFSIBGA}).
\end{remark}

\subsection{The induced action on the Lipschitz-free space}
\label{subsec:action-free}

We now pass from the fixed-point description on $\Lip_0(M)$ to its
predual counterpart. 

For $g\in G$, we define $V_g$ on the canonical vectors $\delta_M(x)$ for any $x \in M$ as
$V_g\delta_x
=
\frac{1}{\chi(g)}\delta_{gx}$.

\begin{proposition}\label{prop:Vg-action}
For every $g\in G$, the operator $V_g$ extends uniquely to a
surjective linear isometry
$V_g\colon\F(M)\longrightarrow\F(M)$. 
Moreover, $V_gV_h=V_{gh}$ for any $ g,h\in G,$ and
$V_g^*=U_{g^{-1}}$.
Equivalently, $U_g=V_{g^{-1}}^*$.
Thus $V=(V_g)_{g\in G}$ is an isometric representation of $G$ on
$\F(M)$ corresponding to the representation $(U_g)_{g\in G}$ on
$\Lip_0(M)$.
\end{proposition}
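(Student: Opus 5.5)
The plan is to obtain $V_g$ from the universal property in Theorem~\ref{thm:free-universal-property}. For fixed $g\in G$, consider the map $\phi_g\colon M\to\F(M)$ given by $\phi_g(x)=\chi(g)^{-1}\delta_{gx}$. Since the action fixes $0$, we have $\phi_g(0)=0$. Because $\delta_M$ is an isometric embedding,
\[
\|\phi_g(x)-\phi_g(y)\|=\chi(g)^{-1}d(gx,gy)=d(x,y).
\]
So $\phi_g$ is Lipschitz with $\Lip(\phi_g)=1$ (using nontriviality of $M$). Theorem~\ref{thm:free-universal-property} then produces a unique bounded linear $V_g:=\widehat{\phi_g}\colon\F(M)\to\F(M)$ with $\|V_g\|=1$ and $V_g\delta_x=\chi(g)^{-1}\delta_{gx}$. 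Uniqueness of this extension also follows directly: the formula fixes $V_g$ on $\operatorname{span}\delta_M(M)$, which is dense in $\F(M)$.

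Next we check the multiplicativity $V_gV_h=V_{gh}$ on the generators $\delta_x$:
\[
V_gV_h\delta_x=\chi(h)^{-1}V_g\delta_{hx}=\chi(h)^{-1}\chi(g)^{-1}\delta_{ghx}=\chi(gh)^{-1}\delta_{(gh)x}=V_{gh}\delta_x .
\]
Here we use that $\chi$ is a character (Proposition~\ref{prop:similarity-character}). Since both sides are bounded linear maps agreeing on a dense subspace, they coincide. Similarly $V_e=I_{\F(M)}$, so $V_gV_{g^{-1}}=V_{g^{-1}}V_g=I$. Hence each $V_g$ is a bijection with inverse $V_{g^{-1}}$. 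Both $V_g$ and its inverse have norm at most $1$, so for every $\mu$ we get $\|\mu\|\le\|V_g\mu\|\le\|\mu\|$. Thus $V_g$ is a surjective isometry.

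For the adjoint identity we use the identification $\F(M)^*=\Lip_0(M)$, under which $f\in\Lip_0(M)$ corresponds to $\widehat f$ with $\langle\delta_x,f\rangle=f(x)$. For $f\in\Lip_0(M)$ and $x\in M$,
\[
(V_g^*f)(x)=\langle V_g\delta_x,f\rangle=\chi(g)^{-1}f(gx)=\chi(g^{-1})f\bigl((g^{-1})^{-1}x\bigr)=(U_{g^{-1}}f)(x).
\]
The element $V_g^*f$ of $\F(M)^*$ is determined by its values on $\delta_x$, since these span a dense subspace. Hence $V_g^*=U_{g^{-1}}$, and replacing $g$ by $g^{-1}$ gives $U_g=V_{g^{-1}}^*$. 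The representation statement then follows from the homomorphism property above.

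None of these steps is a serious obstacle. The only point needing care is the identification of $V_g^*$ as an element of $\Lip_0(M)$, which must be made precise through evaluation at the $\delta_x$. The rest is density and the universal property.
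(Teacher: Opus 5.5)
Your proposal is correct and follows the paper's proof essentially step by step: you linearize $x\mapsto\chi(g)^{-1}\delta_{gx}$ via the universal property, verify $V_gV_h=V_{gh}$ and the adjoint identity on the $\delta_x$, and extend by density. Your justification that $V_g$ is an isometry is more careful than the paper's. You observe that both $V_g$ and $V_g^{-1}=V_{g^{-1}}$ have norm at most $1$, whereas the paper reads the isometry off the universal property directly, even though that property by itself only gives $\|V_g\|=1$.
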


\begin{proof}
Let $g\in G$. Then for $x,y\in M$,
$$
\begin{aligned}
\|V_g\delta_x-V_g\delta_y\|=
\frac{1}{\chi(g)}
\|\delta_{gx}-\delta_{gy}\|
=
\frac{1}{\chi(g)}d(gx,gy)
=
d(x,y).
\end{aligned}
$$
Hence the map
$x\longmapsto\frac{1}{\chi(g)}\delta_{gx}$ 
is an isometric map from $M$ into $\F(M)$ which sends $0$ to $0$.
Thus, by the universal property of $\F(M)$, it extends uniquely to a
linear isometry $V_g\colon\F(M)\longrightarrow\F(M)$.

Again, for $g,h\in G$ and $x\in M$,
$$
\begin{aligned}
V_gV_h\delta_x
=
\frac{1}{\chi(h)}V_g\delta_{hx}
=
\frac{1}{\chi(g)\chi(h)}\delta_{ghx}
=
\frac{1}{\chi(gh)}\delta_{ghx}
=
V_{gh}\delta_x.
\end{aligned}
$$
It follows that
$V_gV_h=V_{gh}$.
In particular as, $V_{g^{-1}}=V_g^{-1}$, each $V_g$ is surjective.

Finally, for $f\in\Lip_0(M)$ and $x\in M$,
\[
\begin{aligned}
 (V_g^*f)(\delta_x)
=
(V_g\delta_x)(f)=
\frac{1}{\chi(g)}f(gx)
=
\chi(g^{-1})f(gx)
=
(U_{g^{-1}}f)(x).
\end{aligned}
\]
Hence
$V_g^*=U_{g^{-1}}$.
\end{proof}
The fixed-point description obtained above has a natural counterpart on
the Lipschitz-free space. Recall that, if $G$ acts linearly on a vector
space $Z$, the space of $G$-coinvariants is defined as
\[
Z_G
:=
Z\big/
\operatorname{span}\{gz-z:g\in G,\ z\in Z\};
\]
see, for instance,\cite[p.~131136]{UROI} and \cite[\S 7.6.4]{tomDieck}. In the Banach space setting, it is natural to
replace the linear span by its norm closure. Accordingly, for the
representation $V$ of $G$ on $\F(M)$ introduced above, we consider the
closed subspace
\begin{equation}\label{eq:Y-G-chi}
\mathcal Y_{G,\chi}
:=
\overline{\operatorname{span}}
\left\{
V_g\mu-\mu:
g\in G,\ \mu\in\F(M)
\right\}.
\end{equation}
We shall refer to $\mathcal Y_{G,\chi}$ as the
\emph{$(G,\chi)$-orbit-difference subspace} associated with the
representation $V$.

The preceding construction allows us to express the fixed-point space
of the representation $U$ in terms of the annihilator of the
orbit-difference subspace associated with $V$. More precisely, we have
the following.
\begin{theorem}\label{thm:annihilator-character}
Let $V=(V_g)_{g\in G}$ be the isometric representation of $G$ on
$\F(M)$ given by Proposition~\ref{prop:Vg-action}, and let
$\mathcal Y_{G,\chi}$ be its $(G,\chi)$-orbit-difference subspace
defined in \eqref{eq:Y-G-chi}. Then
\[
\mathcal Y_{G,\chi}^{\perp}
=
\Fix(U)
=
\Lip_0^{G,\chi}(M),
\]
where $U=(U_g)_{g\in G}$ is the isometric representation of $G$ on
$\Lip_0(M)$ introduced in Proposition~\ref{prop:Ug-isometry}.
Consequently,
${}^\perp\Lip_0^{G,\chi}(M)
=
\mathcal Y_{G,\chi}$.
\end{theorem}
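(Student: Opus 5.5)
The plan is to establish the two inclusions between $\mathcal Y_{G,\chi}^{\perp}$ and $\Fix(U)$ by duality. The second equality $\Fix(U)=\Lip_0^{G,\chi}(M)$ is exactly Proposition~\ref{prop:chi-fixed-points}, and the final assertion about the pre-annihilator will follow from the bipolar theorem.

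\textbf{First inclusion.} Let $f\in\mathcal Y_{G,\chi}^{\perp}$. Since $f$ is a continuous linear functional, annihilating the closed span is equivalent to annihilating each generator $V_g\mu-\mu$. So for every $g\in G$ and $\mu\in\F(M)$ we have
\[
0=\langle V_g\mu-\mu,f\rangle=\langle \mu, V_g^*f-f\rangle .
\]
Hence $V_g^*f=f$ for every $g$. By Proposition~\ref{prop:Vg-action}, $V_g^*=U_{g^{-1}}$. As $g$ runs over $G$, so does $g^{-1}$, and therefore $U_gf=f$ for all $g$, that is, $f\in\Fix(U)$.

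\textbf{Second inclusion.} Conversely, if $f\in\Fix(U)$, then $V_g^*f=U_{g^{-1}}f=f$. Running the same computation backwards shows that $f$ vanishes on every generator $V_g\mu-\mu$. By linearity and continuity, $f$ then vanishes on their closed span $\mathcal Y_{G,\chi}$. This proves $\mathcal Y_{G,\chi}^{\perp}=\Fix(U)=\Lip_0^{G,\chi}(M)$.

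\textbf{Pre-annihilator.} For the consequence, I apply the Hahn--Banach bipolar identity ${}^\perp(\mathcal Y^\perp)=\overline{\mathcal Y}$, valid for any subspace $\mathcal Y$ of a Banach space paired with its dual. Here the dual of $\F(M)$ is $\Lip_0(M)$ via Theorem~\ref{thm:free-universal-property}, and $\mathcal Y_{G,\chi}$ is already norm-closed. This gives ${}^\perp\Lip_0^{G,\chi}(M)=\mathcal Y_{G,\chi}$.

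\textbf{Expected obstacle.} There is no serious obstacle. The only point needing care is the duality convention: the pairing $\langle\mu,f\rangle$ must be the canonical one under $\F(M)^*=\Lip_0(M)$, and the adjoint relation $V_g^*=U_{g^{-1}}$ from Proposition~\ref{prop:Vg-action}, established on the $\delta_x$, must extend to all of $\F(M)$. It does, by linearity, density of $\operatorname{span}\delta_M(M)$, and boundedness.

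As a side remark, the same argument tested only on $\mu=\delta_x$ shows that the generators $\delta_{gx}-\chi(g)\delta_x$ (rescaled, these are $V_g\delta_x-\delta_x$) already have annihilator $\Fix(U)$. So their closed span coincides with $\mathcal Y_{G,\chi}$, which is the identification used in Theorem~\ref{thm:main-equivariant}(ii).
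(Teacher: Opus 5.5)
Your proof is correct and follows essentially the same route as the paper: identify $\mathcal Y_{G,\chi}^{\perp}$ with the fixed points of the dual action, invoke Proposition~\ref{prop:chi-fixed-points}, and conclude with the bipolar theorem. The only difference is in packaging. You use the adjoint identity $V_g^*=U_{g^{-1}}$ to handle every $\mu\in\F(M)$ at once, whereas the paper evaluates on $\delta_x$ and extends by density and boundedness of $V_g-I_{\F(M)}$.
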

\begin{proof}
Let $f\in\Lip_0(M)$. By the definition of
$\mathcal Y_{G,\chi}$, we have
$f\in\mathcal Y_{G,\chi}^{\perp}$ if and only if
\[
(V_g\mu-\mu)(f)=0
\qquad
\text{for every } g\in G \text{ and } \mu\in\F(M).
\]
In particular, for any $x \in M$ we have $(V_g\delta_x-\delta_x)(f)=0$. That is. $f \in \mathcal Y_{G,\chi}^{\perp}$.

Conversely, suppose that $f\in\Lip_0^{G,\chi}(M)$. Then $ f(gx)=\chi(g)f(x)$ for all $ g\in G,\ x\in M$, and therefore $(V_g\delta_x-\delta_x)(f) = \frac{1}{\chi(g)}f(gx)-f(x) = 0$ for every $g\in G$ and $x\in M$. Since $\operatorname{span}\{\delta_x:x\in M\}$ is dense in $\F(M)$ and $V_g-I_{\F(M)}$ is bounded, it follows that \[ (V_g\mu-\mu)(f)=0 \qquad \text{for every } g\in G \text{ and } \mu\in\F(M). \] Hence  $f\in\mathcal Y_{G,\chi}^{\perp}$. Consequently, $ \mathcal Y_{G,\chi}^{\perp} = \Lip_0^{G,\chi}(M)$.

Now we show that $f \in \mathcal Y_{G,\chi}^{\perp} \iff f \in \Fix(U)$.

Let $f \in \mathcal Y_{G,\chi}^{\perp}$. Thus,
$\frac{1}{\chi(g)}f(gx)-f(x)=0$,
and hence
$f(gx)=\chi(g)f(x)$ for all $g\in G,\ x\in M$.
Equivalently,
\[
f(g^{-1}x)
=
\chi(g^{-1})f(x)
=
\frac{1}{\chi(g)}f(x).
\]
Therefore, by the definition of $U_g$,
\[
(U_gf)(x)
=
\chi(g)f(g^{-1}x)
=
f(x) ~\mbox{for all}~g\in G,\ x\in M.
\]
Thus
$U_gf=f
~\mbox{for every }~ g\in G$,
and consequently $f\in\Fix(U)$.

Conversely, if $f\in\Fix(U)$, then
$\chi(g)f(g^{-1}x)=f(x)$ for all $g\in G,\ x\in M$.
Replacing $x$ by $gx$ we have
$f(gx)=\chi(g)f(x)$.
Hence
$$(V_g\delta_x-\delta_x)(f)
=
\frac{1}{\chi(g)}f(gx)-f(x)
=
0,$$
for every $g\in G$ and $x\in M$. Since
$\operatorname{span}\{\delta_x:x\in M\}$ is dense in $\F(M)$, it follows
that

$(V_g\mu-\mu)(f)=0
~\text{for every } g\in G,\ \mu\in\F(M)$.
Therefore
$f\in\mathcal Y_{G,\chi}^{\perp}$.
Consequently,
$\mathcal Y_{G,\chi}^{\perp}
=
\Fix(U)$.
By Proposition~\ref{prop:chi-fixed-points},
$\Fix(U)=\Lip_0^{G,\chi}(M),$
and hence
$\mathcal Y_{G,\chi}^{\perp}
=
\Fix(U)
=
\Lip_0^{G,\chi}(M)$.

Finally, since $\mathcal Y_{G,\chi}$ is a closed linear subspace of
$\F(M)$, the standard annihilator identity gives
${}^\perp\bigl(\mathcal Y_{G,\chi}^{\perp}\bigr)
=
\mathcal Y_{G,\chi}$.
Combining this with the preceding equality yields
${}^\perp\Lip_0^{G,\chi}(M)
=
\mathcal Y_{G,\chi}$,
as required.
\end{proof}

Motivated by Theorem~\ref{thm:annihilator-character}, we introduce
the corresponding quotient space.

\begin{definition}\label{def:equivariant-free}
The \emph{$(G,\chi)$-equivariant Lipschitz-free space} over $M$ is
defined by
\[
\F^{G,\chi}(M)
=
\F(M)/\mathcal Y_{G,\chi}.
\]
\end{definition}

The preceding theorem immediately gives the canonical dual
identification. From the standard duality for quotient spaces,
$\bigl(\F(M)/\mathcal Y_{G,\chi}\bigr)^*
\cong
\mathcal Y_{G,\chi}^{\perp}$.
The following corollary follows from
Theorem~\ref{thm:annihilator-character}.

\begin{corollary}\label{cor:equivariant-predual}
There is a canonical linear isometric identification
$\bigl(\F^{G,\chi}(M)\bigr)^*
\cong
\Lip_0^{G,\chi}(M)$.
\end{corollary}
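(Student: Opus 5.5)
The identification will come from the standard duality for quotients of Banach spaces, combined with Theorem~\ref{thm:annihilator-character}. Write $q=q_{G,\chi}\colon\F(M)\to\F^{G,\chi}(M)$ for the quotient map. Since $\mathcal Y_{G,\chi}$ is a closed subspace of $\F(M)$, the adjoint $q^*\colon(\F(M)/\mathcal Y_{G,\chi})^*\to\F(M)^*$ is a linear isometry onto $\mathcal Y_{G,\chi}^\perp$. I would verify this directly rather than just quote it.

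The first step is to show $q^*$ lands in $\mathcal Y_{G,\chi}^\perp$. For $\varphi\in(\F^{G,\chi}(M))^*$ and $\mu\in\mathcal Y_{G,\chi}$ we have $\langle\mu,q^*\varphi\rangle=\varphi(q\mu)=0$.

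The second step is to show $q^*$ is isometric. Because $q$ maps the open unit ball of $\F(M)$ onto the open unit ball of the quotient, we get
\[
\|q^*\varphi\|=\sup_{\|\mu\|<1}|\varphi(q\mu)|=\|\varphi\|.
\]

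The third step is surjectivity onto $\mathcal Y_{G,\chi}^\perp$. Take $f\in\mathcal Y_{G,\chi}^\perp$ and define $\varphi(q\mu)=\langle\mu,f\rangle$. This is well defined because $f$ vanishes on $\ker q=\mathcal Y_{G,\chi}$. It is bounded, since $|\langle\mu,f\rangle|=|\langle\mu+\nu,f\rangle|\le\|f\|\,\|\mu+\nu\|$ for every $\nu\in\mathcal Y_{G,\chi}$, and taking the infimum over $\nu$ gives $|\varphi(q\mu)|\le\|f\|\,\|q\mu\|$. By construction $q^*\varphi=f$.

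Next I compose with the canonical isometry $\F(M)^*\cong\Lip_0(M)$ given by $f\mapsto(x\mapsto\langle\delta_x,f\rangle)$, which comes from Theorem~\ref{thm:free-universal-property}. Under this identification, $\mathcal Y_{G,\chi}^\perp$ equals $\Lip_0^{G,\chi}(M)$ by Theorem~\ref{thm:annihilator-character}. The resulting isometry $(\F^{G,\chi}(M))^*\to\Lip_0^{G,\chi}(M)$ is then explicitly $\varphi\mapsto\bigl(x\mapsto\varphi(q(\delta_x))\bigr)$. This formula justifies the word ``canonical'': no choices are involved.

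There is no real obstacle here. The only point needing care is the isometry of $q^*$, which rests on the fact that the quotient norm is an infimum over cosets, so the unit ball of $\F(M)$ maps onto a dense subset of the quotient unit ball (in fact onto the open unit ball). All the substantive content, namely the identification of the annihilator with the $\chi$-equivariant functions, was already established in Theorem~\ref{thm:annihilator-character}.
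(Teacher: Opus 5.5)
Your proposal is correct and follows the same route as the paper, which simply invokes the standard quotient duality $\bigl(\F(M)/\mathcal Y_{G,\chi}\bigr)^*\cong\mathcal Y_{G,\chi}^{\perp}$ and then applies Theorem~\ref{thm:annihilator-character}. The only difference is that you verify the quotient duality by hand, and in doing so you also recover the explicit formula $\varphi\mapsto\bigl(x\mapsto\varphi(q_{G,\chi}(\delta_x))\bigr)$ that the paper states in Theorem~\ref{thm:main-equivariant}(ii).
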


\subsection{The universal property of the equivariant free space}
\label{subsec:universal-equivariant}

We conclude this section by showing that $\F^{G,\chi}(M)$ plays the
same linearizing role for character-equivariant Lipschitz mappings as
$\F(M)$ does for arbitrary Lipschitz mappings.

Let $E$ be a Banach space and let
$q_{G,\chi}\colon
\F(M)\longrightarrow\F^{G,\chi}(M)$
denote the canonical quotient map.

\begin{theorem}\label{thm:universal-character}
For every Banach space $E$, the correspondence
\[
f\longmapsto\widetilde f,
\qquad
\widetilde f\bigl(q_{G,\chi}(\delta_x)\bigr)=f(x),
\]
defines a canonical linear isometric isomorphism
$\Lip_0^{G,\chi}(M,E)
\cong
\mathcal L\bigl(\F^{G,\chi}(M),E\bigr)$.
\end{theorem}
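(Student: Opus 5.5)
The plan is to transfer the problem to $\F(M)$ via Theorem~\ref{thm:free-universal-property}, and then factor through the quotient. Throughout, ``vanishing on $\mathcal Y_{G,\chi}$'' replaces the scalar annihilator computation of Theorem~\ref{thm:annihilator-character} by its $E$-valued analogue.

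\textbf{Existence and norm equality.} Let $f\in\Lip_0^{G,\chi}(M,E)$ and let $\widehat f\colon\F(M)\to E$ be its canonical linearization, so $\|\widehat f\|=\Lip(f)$. For $g\in G$ and $x\in M$ we have
\[
\widehat f\bigl(\delta_{gx}-\chi(g)\delta_x\bigr)=f(gx)-\chi(g)f(x)=0 .
\]
Equivalently, $\widehat f(V_g\delta_x-\delta_x)=0$. Since $V_g-I_{\F(M)}$ is bounded and $\operatorname{span}\delta_M(M)$ is dense, $\widehat f$ vanishes on every $V_g\mu-\mu$, and hence, by linearity and continuity, on all of $\mathcal Y_{G,\chi}$. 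By the factorization theorem for quotients there is a unique bounded $\widetilde f\colon\F^{G,\chi}(M)\to E$ with $\widehat f=\widetilde f\circ q_{G,\chi}$.

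For the norm, the inequality $\|\widetilde f\|\le\|\widehat f\|$ comes from the definition of the quotient norm: take the infimum of $\|\widehat f(\mu+y)\|\le\|\widehat f\|\|\mu+y\|$ over $y\in\mathcal Y_{G,\chi}$. The reverse inequality holds because $\|q_{G,\chi}\|\le1$. Hence $\|\widetilde f\|=\|\widehat f\|=\Lip(f)$.

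\textbf{Uniqueness.} The classes $q_{G,\chi}(\delta_x)$ span a dense subspace of $\F^{G,\chi}(M)$, since $q_{G,\chi}$ is a continuous surjection. Therefore $\widetilde f$ is determined by the prescription $\widetilde f(q_{G,\chi}(\delta_x))=f(x)$. It follows that $\Lambda_E\colon f\mapsto\widetilde f$ is well defined. It is linear by uniqueness, and it is an isometry by the norm equality above.

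\textbf{Surjectivity.} Given $T\in\mathcal L(\F^{G,\chi}(M),E)$, set $f=T\circ q_{G,\chi}\circ\delta_M$. This map is Lipschitz with $\Lip(f)\le\|T\|$, and $f(0)=0$. For equivariance we compute
\[
f(gx)-\chi(g)f(x)=T\,q_{G,\chi}\bigl(\delta_{gx}-\chi(g)\delta_x\bigr)=0,
\]
because $\delta_{gx}-\chi(g)\delta_x=\chi(g)(V_g\delta_x-\delta_x)\in\mathcal Y_{G,\chi}$. Then $\widetilde f$ and $T$ agree on the dense set $\{q_{G,\chi}(\delta_x)\}$, so $\widetilde f=T$. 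This also gives the inverse formula $\Lambda_E^{-1}(T)=T\circ q_{G,\chi}\circ\delta_M$.

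\textbf{Main point of care.} The step that needs attention is showing that $\widehat f$ kills the whole closed span of orbit differences of arbitrary $\mu$, not merely the generators $\delta_{gx}-\chi(g)\delta_x$. This is handled by the density argument already used in Theorem~\ref{thm:annihilator-character}, now with $E$-valued operators. The same identification shows that the two descriptions of $\mathcal Y_{G,\chi}$, via $V_g\mu-\mu$ and via $\delta_{gx}-\chi(g)\delta_x$, agree.
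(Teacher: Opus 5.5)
Your proposal is correct and takes essentially the same route as the paper: linearize through $\F(M)$, observe that $\widehat f$ vanishes on $\mathcal Y_{G,\chi}$, factor through $q_{G,\chi}$, and invert via $T\mapsto T\circ q_{G,\chi}\circ\delta_M$. The differences are minor: you get $\|\widetilde f\|=\Lip(f)$ directly from $\|q_{G,\chi}\|\le 1$ rather than from the two constructions being mutually inverse, and you spell out the density step from the generators $\delta_{gx}-\chi(g)\delta_x$ to all $V_g\mu-\mu$, which the paper leaves implicit.
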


\begin{proof}
Let $f\in\Lip_0^{G,\chi}(M,E)$. By the universal property of
$\F(M)$, there exists a unique operator $\widehat f\colon\F(M)\longrightarrow E$
such that
$\widehat f(\delta_x)=f(x),$ for all $ x\in M$,
and $\|\widehat f\|=\Lip(f)$.
For $g\in G$ and $x\in M$,
\[
\begin{aligned}
\widehat f
\bigl(\delta_{gx}-\chi(g)\delta_x\bigr)
&=
f(gx)-\chi(g)f(x)=0.
\end{aligned}
\]
Hence
$\mathcal Y_{G,\chi}\subseteq\ker\widehat f$,
and therefore $\widehat f$ factors uniquely through the quotient.
Thus there exists a unique

$\widetilde f\colon\F^{G,\chi}(M)\longrightarrow E$
such that
$\widehat f=\widetilde f\circ q_{G,\chi}$.
Moreover, $\|\widetilde f\|\leq\Lip(f)$.

Conversely, let
$T\in\mathcal L(\F^{G,\chi}(M),E)$
and define
$f_T(x)
=
T\bigl(q_{G,\chi}(\delta_x)\bigr),$ for all $ x\in M$.
Since $q_{G,\chi}$ is contractive,
\[
\begin{aligned}
\|f_T(x)-f_T(y)\|
&\leq
\|T\|
\|q_{G,\chi}(\delta_x-\delta_y)\|
\leq
\|T\|d(x,y).
\end{aligned}
\]
Thus $f_T\in\Lip_0(M,E)$ and $\Lip(f_T)\leq\|T\|$.
Furthermore,
$q_{G,\chi}
\bigl(\delta_{gx}-\chi(g)\delta_x\bigr)
=0$,
so $f_T(gx)=\chi(g)f_T(x)$.
Hence $f_T\in\Lip_0^{G,\chi}(M,E)$.

The above two constructions are inverse to each other. Consequently, $\|\widetilde f\|=\Lip(f)$,
and the correspondence is a linear isometric isomorphism.
\end{proof}
%================================================
\subsection{Amenable averaging and bidual realizations}
\label{subsec:amenable-bidual}
%================================================

The orbit-difference construction is closely related to
the averaging method of C\'uth and Doucha \cite{PILFSIBGA}.
Their Lemma~3.2 associates with an isometric group action
on a Banach space a contractive averaging operator taking
values in its bidual. We record the consequence of this
construction for the Banach spaces of coinvariants
considered here.

For the next proposition we consider $G$ be a group that is amenable when endowed with the discrete topology.
\begin{proposition}\label{prop:amenable-coinvariants}
Let $X$ be a real Banach space, let $G$ be an amenable
discrete group, and let
$V\colon G\longrightarrow\operatorname{Isom}_{\mathrm{lin}}(X)$
be a representation by surjective linear isometries.
Set\\
$\mathcal D_V
=
\overline{\operatorname{span}}
\{V_g u-u:g\in G,\ u\in X\}$, and let $q\colon X\to X/\mathcal D_V$ be the quotient map.
Then there exists a linear isometric embedding
$J_V\colon X/\mathcal D_V\longrightarrow X^{**}$.
Moreover, the subspace\\
$\mathcal D_V^\perp
=
\{f\in X^*:V_g^*f=f\text{ for every }g\in G\}$
is the range of a contractive projection on $X^*$.
\end{proposition}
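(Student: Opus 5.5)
The plan is to build everything from one averaging operator $A\colon X^*\to X^*$, defined with an invariant mean on $\ell_\infty(G)$. I would first obtain the contractive projection onto $\mathcal D_V^\perp$ and then get $J_V$ by duality. Fix a left-invariant mean $m\in\ell_\infty(G)^*$, which exists because $G$ is amenable as a discrete group. For $f\in X^*$ and $u\in X$, the function $g\mapsto\langle V_{g^{-1}}u,f\rangle$ (or $\langle V_g u,f\rangle$, whichever choice makes the invariance work) is bounded by $\|u\|\|f\|$, since each $V_g$ is an isometry. So I can define
\[
\langle u,Af\rangle:=m_g\bigl[\langle V_g u,f\rangle\bigr].
\]
This is linear in $u$ and bounded by $\|u\|\|f\|$, hence $Af\in X^*$ and $\|A\|\le1$.

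\textbf{Identifying the fixed-point space.} The identity $\mathcal D_V^\perp=\{f:V_g^*f=f\ \forall g\}$ is direct. We have $f\perp V_gu-u$ for all $u$ exactly when $V_g^*f=f$, and annihilating a set is the same as annihilating its closed span.

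\textbf{Projection properties.} Next I check three facts.
\begin{enumerate}
\item[(a)] If $V_g^*f=f$ for all $g$, the averaged function is constant, so $Af=f$.
\item[(b)] For every $f$ and $h$, $\langle u,V_h^*Af\rangle=\langle V_hu,Af\rangle=m_g[\langle V_gV_hu,f\rangle]=m_g[\langle V_{gh}u,f\rangle]$. This equals $\langle u,Af\rangle$ by invariance of $m$; right invariance is needed here, so I would choose $m$ two-sided or right-invariant, which is available for amenable discrete groups (replace $m$ by $\check m$).
\item[(c)] Hence $Af\in\mathcal D_V^\perp$, so $A$ is a contractive projection onto $\mathcal D_V^\perp$.
\end{enumerate}
This is the step most likely to cause bookkeeping trouble: matching the side of invariance to the order of composition $V_gV_h=V_{gh}$. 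I would settle it by choosing the variable ($g$ versus $g^{-1}$) so that a right-invariant mean applies.

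\textbf{Constructing $J_V$.} Define $J_V(q u):=A^*(\iota u)$, where $\iota\colon X\to X^{**}$ is the canonical embedding. Concretely, $\langle f,J_V(qu)\rangle=\langle u,Af\rangle$. This is well defined: for $u\in\mathcal D_V$ we have $\langle u,Af\rangle=0$ because $Af\in\mathcal D_V^\perp$. For the norm, recall $(X/\mathcal D_V)^*\cong\mathcal D_V^\perp$ isometrically, so $\|qu\|=\sup\{|\langle u,f\rangle|:f\in\mathcal D_V^\perp,\|f\|\le1\}$.
\begin{itemize}
\item Upper bound: $|\langle f,J_V(qu)\rangle|=|\langle u,Af\rangle|$, and since $Af$ lies in the unit ball of $\mathcal D_V^\perp$ whenever $\|f\|\le1$, this gives $\|J_V(qu)\|\le\|qu\|$.
\item Lower bound: testing $J_V(qu)$ against $f\in\mathcal D_V^\perp$ gives $\langle u,Af\rangle=\langle u,f\rangle$, so $\|J_V(qu)\|\ge\|qu\|$.
\end{itemize}
Therefore $J_V$ is a linear isometric embedding, as required.
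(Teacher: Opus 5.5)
Your proposal is correct and follows essentially the paper's argument: your averaging operator $A$ is the paper's projection $P_V$, and $A^*\iota$ is its operator $R_V$, which vanishes on $\mathcal D_V$ and induces $J_V$, with the same two-sided norm estimate via $(X/\mathcal D_V)^*\cong\mathcal D_V^\perp$. The differences are cosmetic. You construct the projection before the embedding, and you average $g\mapsto\langle V_g u,f\rangle$ against a right-invariant mean, whereas the paper averages $g\mapsto f(V_{g^{-1}}u)$ and needs only left invariance.
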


\begin{proof}
Let us choose a bi-invariant mean $m$ on $\ell_\infty(G)$.
Following \cite[Lemma~3.2]{PILFSIBGA}, we consider
$R_V\colon X\to X^{**}$ as
$(R_Vu)(f)
=
m_g\bigl[g \mapsto f(V_{g^{-1}}u)\bigr],
~\mbox{ for any}~ u\in X,\ f\in X^*$.
Then $\|R_V\|\leq1$, and invariance of the mean gives $R_VV_h=R_V $ for any $h\in G$.
Hence $R_V$ vanishes on $\mathcal D_V$ and factors
through a contraction
\[
J_V\colon X/\mathcal D_V\longrightarrow X^{**},
\qquad J_Vq=R_V.
\]

Let $f\in\mathcal D_V^\perp$, then
$f(V_{g^{-1}}u)=f(u)$ for every $g\in G, u \in X$, and therefore $(R_Vu)(f)=f(u)$.
Using the canonical identification
$(X/\mathcal D_V)^*=\mathcal D_V^\perp$, we obtain
\[
\|q(u)\|
=
\sup_{\substack{f\in\mathcal D_V^\perp\\\|f\|\leq1}}
|f(u)|\leq \|R_Vu\|
=\|J_Vq(u)\|
\leq\|q(u)\|.
\]
Thus $J_V$ is an isometry. In particular,
$\ker R_V=\mathcal D_V$.

Finally, we define $P_V\colon X^*\to X^*$ by $(P_Vf)(u)= (R_Vu)(f)$.
The identity $R_VV_h=R_V$ implies that
$P_Vf\in\mathcal D_V^\perp$, while the preceding
calculation shows that $P_Vf=f$ whenever
$f\in\mathcal D_V^\perp$. Thus $P_V$ is a contractive
projection onto $\mathcal D_V^\perp$.
\end{proof}
\begin{remark}
The discreteness assumption allows us to average arbitrary bounded coefficient functions $g\mapsto f(V_{g^{-1}}u)$ without imposing continuity conditions on the representation. Thus amenability is understood here for the underlying discrete group. A left-invariant mean on $\ell_\infty(G)$ suffices for the proof.
\end{remark}

Applying this proposition to the normalized representation
on $\F(M)$ gives the following consequence.

\begin{corollary}\label{cor:equivariant-bidual}
Suppose that an amenable discrete group $G$ acts on a
pointed metric space $M$ by similarities fixing the
distinguished point, with associated character $\chi$.
Then
$\F^{G,\chi}(M)$
admits a linear isometric embedding into $\F(M)^{**}$.
Moreover, $\Lip_0^{G,\chi}(M)$ is the range of a
contractive projection on $\Lip_0(M)$, given by $(P_{G,\chi}f)(x)
=
m_g\left[g \mapsto \chi(g)f(g^{-1}x)\right]$.
If $\Lip_0^{G,\chi}(M)\neq\{0\}$, this projection
has norm one.
\end{corollary}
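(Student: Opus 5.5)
The plan is to apply Proposition~\ref{prop:amenable-coinvariants} with $X=\F(M)$ and $V=(V_g)_{g\in G}$, the representation by surjective linear isometries from Proposition~\ref{prop:Vg-action}. By Theorem~\ref{thm:main-equivariant}(ii), or equivalently the definition \eqref{eq:Y-G-chi}, the subspace $\mathcal D_V$ is exactly $\mathcal Y_{G,\chi}$. Hence $X/\mathcal D_V=\F^{G,\chi}(M)$, and the embedding $J_V$ provided by the proposition gives the required linear isometric embedding $\F^{G,\chi}(M)\to\F(M)^{**}$.

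For the projection, the same proposition yields a contractive projection $P_V$ on $\F(M)^*=\Lip_0(M)$ whose range is $\mathcal D_V^\perp$. By Theorem~\ref{thm:annihilator-character}, $\mathcal D_V^\perp=\mathcal Y_{G,\chi}^\perp=\Lip_0^{G,\chi}(M)$. It then remains to identify $P_V$ with the stated formula. We have $(P_Vf)(x)=(R_V\delta_x)(f)=m_g[f(V_{g^{-1}}\delta_x)]$. Using $V_{g^{-1}}\delta_x=\chi(g^{-1})^{-1}\delta_{g^{-1}x}=\chi(g)\delta_{g^{-1}x}$, this becomes $m_g[\chi(g)f(g^{-1}x)]$, which is the formula for $P_{G,\chi}$. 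Equivalently, $f(V_{g^{-1}}\delta_x)=(U_gf)(x)$ by Proposition~\ref{prop:Vg-action}. Two points need checking here:
\begin{enumerate}
\item[(a)] The coefficient function $g\mapsto\chi(g)f(g^{-1}x)$ is bounded. Indeed, $|\chi(g)f(g^{-1}x)|\le \chi(g)\Lip(f)d(g^{-1}x,0)=\Lip(f)d(x,0)$, because the action fixes $0$.
\item[(b)] The definition of $P_V$ in the proposition evaluates $P_Vf$ on all of $\F(M)$, while the stated formula only evaluates it at points. Both are elements of $\Lip_0(M)=\F(M)^*$, and they agree on $\delta_x$ for every $x$, so by linearity and density of $\operatorname{span}\delta_M(M)$ they coincide.
\end{enumerate}

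Finally, for the norm-one claim, note that any nonzero projection $P$ satisfies $\|P\|=\|P^2\|\le\|P\|^2$, so $\|P\|\ge1$. If $\Lip_0^{G,\chi}(M)\ne\{0\}$, then $P_{G,\chi}$ is nonzero, and since it is also contractive, $\|P_{G,\chi}\|=1$.

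The only step requiring care is matching the conventions, namely that $V_{g^{-1}}$ produces the factor $\chi(g)$ rather than $\chi(g)^{-1}$. Everything else is a direct invocation of the earlier results.
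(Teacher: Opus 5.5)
Your proposal is correct and follows essentially the same route as the paper. You apply Proposition~\ref{prop:amenable-coinvariants} to $X=\F(M)$ with the normalized representation $V$, identify $\mathcal D_V=\mathcal Y_{G,\chi}$ and its annihilator with $\Lip_0^{G,\chi}(M)$, and read off the projection formula from $V_{g^{-1}}\delta_x=\chi(g)\delta_{g^{-1}x}$. Your extra checks (boundedness of $g\mapsto\chi(g)f(g^{-1}x)$, the identification on the $\delta_x$, and $\|P\|\ge 1$ for a nonzero projection) only make explicit details that the paper's short proof leaves implicit.
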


\begin{proof}
Applying Proposition~\ref{prop:amenable-coinvariants} with
$X=\F(M)$ and
$V_g\delta_x=\frac{1}{\chi(g)}\delta_{gx}$.
The orbit-difference subspace is $\mathcal Y_{G,\chi}$,
and its annihilator is $\Lip_0^{G,\chi}(M)$.
Furthermore,
$V_{g^{-1}}\delta_x=\chi(g)\delta_{g^{-1}x}$,
which gives the stated projection formula.
\end{proof}

\begin{remark}
When $\chi\equiv1$, the quotient
$\F^{G,1}(M)$ is canonically isometric to $\F(M/G)$,
where $M/G$ denotes the metric space of orbit closures
used in \cite{PILFSIBGA}. This is the quotient identification
of \cite[Lemma~4.3]{PILFSIBGA}.

Thus Corollary~\ref{cor:equivariant-bidual} gives the
point-preserving isometric case of the bidual realization
in \cite[Theorem~4.5]{PILFSIBGA}, and extends the same
construction to normalized similarity representations.
\end{remark}

%================================================
\section{Applications}
\label{sec:applications}
%================================================

We now present several applications of the fixed-point and
orbit-difference framework developed in the preceding section. We first
show that positive homogeneous Lipschitz mappings arise from the
character-equivariant construction associated with the dilation action
of the multiplicative group $(0,\infty)$. We then turn to recentered
translation actions and show that their fixed points are precisely the
bounded linear mappings. Finally, we extend the same point of view to
two-Lipschitz mappings, where the corresponding fixed-point space is
the space of bounded bilinear mappings.

%================================================
\subsection{Positive homogeneous Lipschitz mappings} (\cite{AM2})
\label{subsec:positive-homogeneous}
%================================================

We first apply the character-equivariant framework to positive
homogeneous Lipschitz mappings. Let $X$ be a Banach space and consider
the multiplicative group
$G=(0,\infty)$
acting on $X$ by dilations, $r\cdot x=rx$.

Let $\chi(r)=r$.
Then $\|rx-ry\|
=
r\|x-y\|
=
\chi(r)\|x-y\|$,
so that this is an action by similarities with associated similarity
character $\chi$ and hence the results of the preceding section apply. We recall that $\Lip_0^{ph}(X) = \left\{ f\in\Lip_0(X): f(rx)=rf(x) \text{ for every } r\geq0,\ x\in X \right\}$ (see \cite{AM2}). With respect to the above action and character, the condition $f(rx)=rf(x)$ is precisely the $(G,\chi)$-equivariance condition. Consequently, \begin{equation}\label{eq:ph-character-equivariant} \Lip_0^{ph}(X) = \Lip_0^{G,\chi}(X). \end{equation} The representation of $G$ on $\Lip_0(X)$ introduced in \eqref{eq:Ug} takes, in the present setting, the form \begin{equation}\label{eq:dilation-action-Lip} (U_rf)(x) = r f\left(\frac{x}{r}\right), \qquad r>0,\ x\in X. \end{equation} Therefore, by Proposition~\ref{prop:chi-fixed-points} and \eqref{eq:ph-character-equivariant}, we immediately obtain \[ \Lip_0^{ph}(X) = \Fix(U) = \left\{ f\in\Lip_0(X): U_rf=f \text{ for every } r>0 \right\}. \] Thus positive homogeneous Lipschitz mappings are precisely the fixed points of the normalized dilation representation on $\Lip_0(X)$. We next consider the corresponding representation on the Lipschitz-free space. By Proposition~\ref{prop:Vg-action}, we have $ V_g\delta_x = \frac{1}{\chi(g)}\delta_{gx}$. Hence, for the dilation action, \begin{equation}\label{eq:dilation-action-free} V_r\delta_x = \frac1r\delta_{rx}, \qquad r>0,\ x\in X. \end{equation} It follows that the associated $(G,\chi)$-orbit-difference subspace is \[ \begin{aligned} \mathcal Y_{G,\chi} = \overline{\operatorname{span}} \left\{ V_r\delta_x-\delta_x: r>0,\ x\in X \right\} = \overline{\operatorname{span}} \left\{ \frac1r\delta_{rx}-\delta_x: r>0,\ x\in X \right\}. \end{aligned} \] Since multiplication of the generators by the nonzero scalar $r$ does not change their closed linear span, we obtain \begin{equation}\label{eq:Y-ph} \mathcal Y_{G,\chi} = \overline{\operatorname{span}} \left\{ \delta_{rx}-r\delta_x: r>0,\ x\in X \right\} = \overline{\operatorname{span}} \left\{ r\delta_x-\delta_{rx}: r>0,\ x\in X \right\}. \end{equation} Combining \eqref{eq:ph-character-equivariant}, \eqref{eq:Y-ph}, and Theorem~\ref{thm:annihilator-character}, we obtain \begin{equation}\label{eq:ph-preannihilator} {}^\perp\Lip_0^{ph}(X) = \overline{\operatorname{span}} \left\{ r\delta_x-\delta_{rx}: r>0,\ x\in X \right\}. \end{equation} Consequently, the corresponding equivariant free space is \[ \F^{G,\chi}(X) = \bigslant{\F(X)} {\overline{\operatorname{span}} \left\{ r\delta_x-\delta_{rx}: r>0,\ x\in X \right\}}. \] The quotient on the right-hand side is precisely the canonical predual $\F^{ph}(X)$ of $\Lip_0^{ph}(X)$ as described in \cite[Proposition ~3.5]{AM2}. Hence \begin{equation}\label{eq:equivariant-free-ph} \F^{G,\chi}(X) \cong \F^{ph}(X) \end{equation} linearly isometrically. Thus the canonical relations $r\delta_x-\delta_{rx}$ for all $ r>0,\ x\in X$, which determine $\F^{ph}(X)$, have a natural group-theoretic interpretation: they are precisely the orbit differences arising from the dilation representation of the multiplicative group $(0,\infty)$ on $\F(X)$. In particular, the positive-homogeneous free space may be viewed as the corresponding Banach space of coinvariants. Finally, the universal property of the equivariant free space specializes to the positive homogeneous setting. For every Banach space $E$, one has the canonical linear isometric identification $ \Lip_0^{ph}(X,E) \cong \mathcal L(\F^{ph}(X),E)$, where the operator associated with $f\in\Lip_0^{ph}(X,E)$ is determined by  $\widetilde f(q_{G,\chi}(\delta_x))=f(x)$, for all $x\in X$. 
% Continue here with:
% - the representation U_r;
% - Lip_0^{ph}(X)=Fix(U);
% - the induced action on F(X);
% - the orbit-difference description;
% - identification with F^{ph}(X);
% - the universal property.
\subsubsection{Amenable averaging and positive homogeneity}

In \cite[Proposition~4.2.5]{MandalThesis}, Mandal established
that $\Lip_0^{ph}(X)$ is complemented in $\Lip_0(X)$ by
constructing the radial extension projection
\[
(Qf)(x)
=
\begin{cases}
\|x\|f\left(\dfrac{x}{\|x\|}\right), & x\neq 0,\\[6pt]
0, & x=0,
\end{cases}
\]
with $\|Q\|\leq 3$. This estimate does not establish
$1$-complementability. We now strengthen the complementability
conclusion by constructing a norm-one projection through
amenable averaging.

Since the multiplicative group $(0,\infty)$ is abelian, it
admits an invariant mean
$m\colon\ell_\infty((0,\infty))\to\mathbb{R}$.
For $f\in\Lip_0(X)$ and $x\in X$, we define
\begin{equation}\label{eq:Pph}
(P_{ph}f)(x)
=
m_r
\left[r \mapsto
r f\left(\frac{x}{r}\right)
\right],
\end{equation}
where $m_r$ means applying the invariant mean $m$ to a bounded function of the variable $r$, explicitly to the map $ r \mapsto r f\left(\frac{x}{r}\right)$.
The expression is well defined, since for any fix $x \in X$ and for any $r>0$ we have  
$\left|
r f\left(\frac{x}{r}\right)
\right|
\leq
\Lip(f)\|x\| $ .

\begin{theorem}\label{thm:ph-projection}
There exists a norm-one projection
\[
P_{ph}\colon
\Lip_0(X)\longrightarrow\Lip_0^{ph}(X).
\]
Consequently, $\Lip_0^{ph}(X)$ is $1$-complemented in
$\Lip_0(X)$.
\end{theorem}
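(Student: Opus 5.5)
The plan is to obtain Theorem~\ref{thm:ph-projection} as the special case of Corollary~\ref{cor:equivariant-bidual} for the dilation action of $G=(0,\infty)$ on $X$ with $\chi(r)=r$. The group $(0,\infty)$ is abelian, so it is amenable as a discrete group and an invariant mean $m$ on $\ell_\infty((0,\infty))$ exists. By \eqref{eq:ph-character-equivariant}, $\Lip_0^{G,\chi}(X)=\Lip_0^{ph}(X)$. Since $V_{r^{-1}}\delta_x=r\,\delta_{x/r}$, the projection supplied by the corollary is exactly $(P_{ph}f)(x)=m_r[r f(x/r)]$, which is formula \eqref{eq:Pph}. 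As a check, and so the argument does not depend on the bidual machinery, I would also verify the key properties directly.

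\textbf{Linearity and $P_{ph}f\in\Lip_0(X)$.} Linearity follows from linearity of $m$, and $(P_{ph}f)(0)=m_r[0]=0$. For $x,y\in X$, the difference $r f(x/r)-r f(y/r)$ is bounded in absolute value by $\Lip(f)\|x-y\|$ for every $r>0$. Since $m$ is positive and $m(1)=1$, it follows that $|(P_{ph}f)(x)-(P_{ph}f)(y)|\le \Lip(f)\|x-y\|$, so $\|P_{ph}\|\le 1$.

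\textbf{Range in $\Lip_0^{ph}(X)$.} Fix $s>0$ and compute $(P_{ph}f)(sx)=m_r[r f(sx/r)]$. Substituting $r=st$ gives $s\,m_t[t f(x/t)]$ after using invariance of $m$ under the translation $r\mapsto r/s$ of the multiplicative group. Hence $(P_{ph}f)(sx)=s(P_{ph}f)(x)$. For $r\ge0$ including $r=0$, the identity is trivial since both sides vanish at $0$.

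\textbf{Projection onto.} If $f\in\Lip_0^{ph}(X)$, then $r f(x/r)=f(x)$ is constant in $r$, so $P_{ph}f=f$. Thus $P_{ph}$ is a projection onto $\Lip_0^{ph}(X)$, and $P_{ph}^2=P_{ph}$ follows.

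\textbf{Norm exactly one.} This requires $\Lip_0^{ph}(X)\neq\{0\}$. For nonzero $X$, take any nonzero $x^*\in X^*$; it is a nonzero positively homogeneous Lipschitz function. Then $1=\|P_{ph}x^*\|/\|x^*\|\le\|P_{ph}\|\le1$.

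The only delicate step is the invariance computation. One must make sure the invariance of $m$ is applied to the bounded function $t\mapsto t f(x/t)$, composed with multiplication by $s$ in the group, and that the factor $s$ is pulled out by linearity. Everything else is routine.
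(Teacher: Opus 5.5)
Your proposal is correct and takes essentially the same route as the paper: define $(P_{ph}f)(x)=m_r[rf(x/r)]$, get $\Lip(P_{ph}f)\le\Lip(f)$ from positivity of the mean, positive homogeneity from invariance of $m$ under $r\mapsto r/s$, and $P_{ph}f=f$ on $\Lip_0^{ph}(X)$. Your remark that this is the case $\chi(r)=r$ of Corollary~\ref{cor:equivariant-bidual}, and your explicit nonzero $x^*\in X^*$ showing $\Lip_0^{ph}(X)\neq\{0\}$ when $X\neq\{0\}$, are small additions to the paper's argument that do not change it.
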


\begin{proof}
Let $f\in\Lip_0(X)$. For $x,y\in X$,
\[
\begin{aligned}
|(P_{ph}f)(x)-(P_{ph}f)(y)|
&\leq
m_r
\left[ r \mapsto
r\left|
f\left(\frac{x}{r}\right)
-
f\left(\frac{y}{r}\right)
\right|
\right]
\leq
\Lip(f)\|x-y\|.
\end{aligned}
\]
Hence
$\Lip(P_{ph}f)\leq\Lip(f)$.

We next show that $P_{ph}f$ is positive homogeneous. Let $s>0$.
Then
$(P_{ph}f)(sx)
=
m_r
\left[
r f\left(\frac{sx}{r}\right)
\right]$.

Put
$H_x(t)=t f\left(\frac{x}{t}\right)$. Thus
we have
$r f\left(\frac{sx}{r}\right)
=
sH_x\left(\frac{r}{s}\right)$.
By invariance of the mean under multiplication by $s^{-1}$, we get
$m_r
H_x\left(\frac{r}{s}\right)
=
m_r H_x(r)$.
It follows that
$(P_{ph}f)(sx)
=
s(P_{ph}f)(x)$, and hence
$P_{ph}f\in\Lip_0^{ph}(X)$.

If $f\in\Lip_0^{ph}(X)$, then
for any $r>0~~
r f\left(\frac{x}{r}\right)=f(x)$. That is.
$P_{ph}f=f$.
Thus $P_{ph}$ is a projection onto $\Lip_0^{ph}(X)$.
Finally,
$\|P_{ph}\|\leq1$.
Since $P_{ph}$ restricts to the identity on the nonzero space
$\Lip_0^{ph}(X)$, we have
$\|P_{ph}\|=1$. Hence the proof follows.
\end{proof}

\begin{remark}\label{rem:comparison-ph-projections}
\begin{enumerate}
    \item The averaging projection $P_{ph}$ is distinct from the radial
projection $Q$ above. Indeed, for $s>0$, we define
$(U_s f)(x)=s f\left(\frac{x}{s}\right)$.
Invariance of the mean gives
\[
(P_{ph}U_s f)(x)
=
m_r\left[rs f\left(\frac{x}{rs}\right)\right]
=
(P_{ph}f)(x),
\]
and hence $P_{ph}U_s=P_{ph}$ for every $s>0$.

The radial projection does not satisfy this identity.
For example, let $f(x)=\min\{\|x\|,1\}$. Then we have
$(Qf)(x)=\|x\|,~\mbox{and}~
(QU_{1/2}f)(x)=\frac12\|x\|$.
Since $X\neq\{0\}$, these functions are different. Therefore
$P_{ph}\neq Q$, independently of the choice of invariant mean.
Theorem~\ref{thm:ph-projection} thus establishes
$1$-complementability by a different construction; it does not
assert that the radial projection has norm one.

\item The averaging argument does not establish weak$^*$ continuity
of $P_{ph}$. Therefore, it does not by itself provide a
norm-one projection onto an isometric copy of $\F^{ph}(X)$
inside $\F(X)$. Here $\F^{ph}(X)$ is naturally realized as
a quotient of $\F(X)$, rather than as a specified subspace.
Obtaining a compatible complemented realization at the
predual level we require an additional argument.
\end{enumerate}

\end{remark}

% Continue with your invariant-mean construction P_{ph} and
% Theorem~\ref{thm:ph-projection}.

%================================================
%================================================
\subsection{Recentered translations and Lipschitz mappings}
\label{subsec:linear-applications}
%================================================

We next apply the fixed-point and orbit-difference approach to bounded
linear mappings. In contrast with the dilation action considered in the
preceding subsection, the additive group $(X,+)$ acts on $X$ by
translations, which do not preserve the distinguished point $0$.
Consequently, the character-equivariant construction developed above
does not apply directly. This difficulty is overcome by recentering
translations at the origin.

\subsubsection{The translation action and its fixed points}
Let $X$ and $Y$ be real Banach spaces. For $a\in X$ and
$f\in\Lip_0(X,Y)$, we define
\begin{equation}\label{eq:rho-translation}
(\rho_a f)(x)
=
f(x+a)-f(a), ~\mbox{for any}~ x\in X.
\end{equation}
Clearly, $\rho_a f$ vanishes at the origin. Moreover,
\[
\|(\rho_a f)(x)-(\rho_a f)(x')\|
=
\|f(x+a)-f(x'+a)\|
\leq
\Lip(f)\|x-x'\|,
\]
and hence $\rho_a f\in\Lip_0(X,Y)$. The following proposition shows
that these recentered translations form an isometric representation
of the additive group $X$.

\begin{proposition}\label{prop:rho-translation-action}
For every $a\in X$, the operator
$\rho_a\colon\Lip_0(X,Y)\longrightarrow\Lip_0(X,Y)$
is a surjective linear isometry. Moreover,
$\rho_a\rho_b=\rho_{a+b}$, for any $ a,b\in X$.
Consequently, $(\rho_a)_{a\in X}$ is an isometric representation of
the additive group $(X,+)$ on $\Lip_0(X,Y)$.
\end{proposition}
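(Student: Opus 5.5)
The argument is a direct verification, parallel to Proposition~\ref{prop:Ug-isometry}, and splits into four steps: linearity and well-definedness, the norm identity, the composition law, and surjectivity. Linearity of $\rho_a$ in $f$ is immediate from \eqref{eq:rho-translation}. The computation preceding the statement already shows that $\rho_a f\in\Lip_0(X,Y)$ with $\Lip(\rho_a f)\le\Lip(f)$.

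\textbf{Isometry.} I will prove the reverse inequality directly rather than deduce it afterwards. For $x\neq x'$, the translation $u=x+a$, $u'=x'+a$ is a bijection of the pairs of distinct points of $X$, and it preserves $\|u-u'\|=\|x-x'\|$. Hence
\[
\sup_{x\neq x'}\frac{\|(\rho_af)(x)-(\rho_af)(x')\|}{\|x-x'\|}
=\sup_{u\neq u'}\frac{\|f(u)-f(u')\|}{\|u-u'\|},
\]
so $\Lip(\rho_a f)=\Lip(f)$.

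\textbf{Composition law.} For $f\in\Lip_0(X,Y)$ and $x\in X$, set $h=\rho_b f$, so that $h(z)=f(z+b)-f(b)$. Then
\[
(\rho_a\rho_b f)(x)=h(x+a)-h(a)=\bigl(f(x+a+b)-f(b)\bigr)-\bigl(f(a+b)-f(b)\bigr)=f(x+a+b)-f(a+b),
\]
which is $(\rho_{a+b}f)(x)$. This step is the only one with any real content: the recentering constants $f(b)$ cancel, and this cancellation is exactly why the subtraction of $f(a)$ is compatible with the group law. Since $X$ is abelian, the order of composition causes no issue. Also $\rho_0 f=f-f(0)=f$, so $\rho_0=I$.

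\textbf{Surjectivity and conclusion.} From the composition law, $\rho_a\rho_{-a}=\rho_{-a}\rho_a=\rho_0=I$. Hence each $\rho_a$ is bijective with inverse $\rho_{-a}$, and in particular surjective. Therefore $a\mapsto\rho_a$ is a homomorphism from $(X,+)$ into $\operatorname{Isom}_{\mathrm{lin}}(\Lip_0(X,Y))$, which is the claimed isometric representation. No step is a genuine obstacle; the only point requiring attention is the careful bookkeeping of the constants in the composition law.
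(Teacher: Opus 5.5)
Your proof is correct and follows essentially the same route as the paper: linearity, the composition law via cancellation of the recentering constants, and $\rho_{-a}=\rho_a^{-1}$ giving surjectivity. The only difference is that you obtain $\Lip(\rho_a f)=\Lip(f)$ directly from the translation invariance of the norm, whereas the paper obtains the reverse inequality by applying the contraction estimate to $\rho_{-a}=\rho_a^{-1}$.
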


\begin{proof}
Linearity of $\rho_a$ immediately follows for any $a \in X$. As observed above,
$\rho_a f\in\Lip_0(X,Y)$ with
$\Lip(\rho_a f)\leq\Lip(f)$.

Furthermore, for any $a,b,x\in X$,
\[
\begin{aligned}
(\rho_a\rho_bf)(x)
&=
(\rho_bf)(x+a)-(\rho_bf)(a)=
f(x+a+b)-f(b)-f(a+b)+f(b)\\
&=
f(x+a+b)-f(a+b)=
(\rho_{a+b}f)(x).
\end{aligned}
\]
Thus $\rho_a\rho_b=\rho_{a+b}$,
in particular,
$\rho_{-a}=\rho_a^{-1}$.
Now, applying the preceding Lipschitz estimate to $\rho_{-a}$ we get
\[
\Lip(f)
=
\Lip(\rho_{-a}\rho_af)
\leq
\Lip(\rho_af)
\leq
\Lip(f),
\]
and therefore $\Lip(\rho_af)=\Lip(f)$.
Hence, each $\rho_a$ is a surjective linear isometry.
\end{proof}
The fixed-point space of this representation has a simple and useful
description.

\begin{theorem}\label{thm:linear-fixed-points}
Let $X$ and $Y$ be real Banach spaces, and let
\[
\rho\colon (X,+)\longrightarrow
\operatorname{Isom}_{\mathrm{lin}}\bigl(\Lip_0(X,Y)\bigr),
\qquad
a\longmapsto \rho_a,
\]
be the isometric representation defined as in \ref{eq:rho-translation}.
Then the fixed-point space of $\rho$ coincides with the space of
bounded linear operators from $X$ into $Y$. More precisely,
\[
\Fix(\rho)
:=
\bigcap_{a\in X}\ker\bigl(\rho_a-I_{\Lip_0(X,Y)}\bigr)
=
\mathcal L(X,Y).
\]
Equivalently,
$f\in\mathcal L(X,Y)
\quad\Longleftrightarrow\quad
\rho_a f=f
\quad\text{for every }a\in X$.
\end{theorem}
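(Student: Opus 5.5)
We prove the two inclusions separately.

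\emph{Step 1: $\mathcal L(X,Y)\subseteq\Fix(\rho)$.} Every $T\in\mathcal L(X,Y)$ is Lipschitz with $\Lip(T)=\|T\|$ and satisfies $T(0)=0$, so $\mathcal L(X,Y)\subseteq\Lip_0(X,Y)$. Additivity gives, for every $a\in X$,
\[
(\rho_aT)(x)=T(x+a)-T(a)=T(x).
\]

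\emph{Step 2: $\Fix(\rho)\subseteq\mathcal L(X,Y)$, additivity.} Let $f\in\Lip_0(X,Y)$ satisfy $\rho_af=f$ for every $a\in X$. Written out, this says $f(x+a)=f(x)+f(a)$ for all $x,a\in X$, so $f$ is additive. Additivity yields $f(nx)=nf(x)$ for $n\in\mathbb N$. From $f(0)=0$ we get $f(-x)=-f(x)$. Dividing, $f(x)=n f(x/n)$, and combining these gives $f(qx)=qf(x)$ for every $q\in\mathbb Q$.

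\emph{Step 3: real homogeneity.} For $t\in\mathbb R$, pick rationals $q_k\to t$. The Lipschitz estimate $\|f(tx)-f(q_kx)\|\le\Lip(f)\,|t-q_k|\,\|x\|$ shows $f(q_kx)\to f(tx)$. Since $f(q_kx)=q_kf(x)\to tf(x)$, we conclude $f(tx)=tf(x)$. Thus $f$ is linear. It is bounded with $\|f\|\le\Lip(f)$, because $\|f(x)\|=\|f(x)-f(0)\|\le\Lip(f)\|x\|$. Hence $f\in\mathcal L(X,Y)$.

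The only step requiring any care is the passage from additivity to real homogeneity. In general this passage needs a regularity assumption, and here it is supplied by the Lipschitz continuity through the density of $\mathbb Q$ in $\mathbb R$. The remaining steps are direct unwinding of the definition \eqref{eq:rho-translation}, and the stated equivalence $f\in\mathcal L(X,Y)\iff\rho_af=f$ for all $a$ is just a restatement of the two inclusions.
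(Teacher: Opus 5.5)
Your proposal is correct and follows essentially the same route as the paper: check that linear maps are fixed, read off additivity from $\rho_af=f$, and use Lipschitz continuity to upgrade additivity to real linearity. The only difference is that you write out the rational-homogeneity and density argument, together with the bound $\|f\|\le\Lip(f)$, which the paper compresses into ``the usual continuity argument for additive mappings.''
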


\begin{proof}
If $T\in\mathcal L(X,Y)$, then, for every $a,x\in X$,
\[
(\rho_aT)(x)
=
T(x+a)-T(a)
=
T(x),
\]
and hence $T\in\Fix(\rho)$.

Conversely, suppose that $f\in\Fix(\rho)$. Then
$f(x+a)-f(a)=f(x)$, for any $ a,x\in X$,
or equivalently,
$f(x+a)=f(x)+f(a)$.
Thus $f$ is additive. Since $f$ is Lipschitz, it is continuous, and
hence the usual continuity argument for additive mappings yields
$f(\lambda x)=\lambda f(x)$, for any $\lambda\in\mathbb R,\ x\in X$.
Therefore, $f\in\mathcal L(X,Y)$.
\end{proof}

Thus bounded linear mappings are precisely the fixed points of the
recentered translation representation. This provides the analogue, for
linearity, of the fixed-point description of positive homogeneous
Lipschitz mappings obtained in the preceding subsection.

%================================================
\subsubsection{The induced action and the translation quotient}
%================================================

We now describe the corresponding action on $\F(X)$. For $a\in X$,
we consider the map
$X\longrightarrow\F(X),
$ such that $
x\longmapsto\delta_{x+a}-\delta_a$.
It sends $0$ to $0$ and satisfies
$\|(\delta_{x+a}-\delta_a)
-(\delta_{x'+a}-\delta_a)\|
=
\|x-x'\|$.
Hence, by the universal property of $\F(X)$, there exists a unique
linear isometry
$S_a\colon\F(X)\longrightarrow\F(X)$
such that
\begin{equation}\label{eq:Sa-definition}
S_a\delta_x
=
\delta_{x+a}-\delta_a.
\end{equation}
\begin{proposition}\label{prop:Sa-action}
Let $X$ be a real Banach space. For each $a\in X$, let
$S_a\colon \F(X)\longrightarrow \F(X)$
be the linear isometry determined by
$S_a\delta_x=\delta_{x+a}-\delta_a$ for any $ x\in X$.
Then each $S_a$ is surjective, and
$S_0=I_{\F(X)}
\qquad\text{and}\qquad
S_aS_b=S_{a+b}~\text{for all }a,b\in X$.
Consequently, the mapping
\[
S\colon (X,+)\longrightarrow
\operatorname{Isom}_{\mathrm{lin}}\bigl(\F(X)\bigr),
\qquad
a\longmapsto S_a,
\]
is a group homomorphism, and hence defines an isometric representation
of $(X,+)$ on $\F(X)$.

Moreover, under the canonical isometric identification
$\F(X)^*=\Lip_0(X)$
we have for every $a\in X$,
$S_a^*=\rho_a$.
More generally, for
$f\in\Lip_0(X,Y)$ and any $a \in X~$, $\widehat{\rho_af}= \widehat{f} \circ S_a$.
\end{proposition}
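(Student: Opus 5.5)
The plan is to verify everything on the generators $\delta_x$, $x\in X$. Since $\operatorname{span}\{\delta_x\}$ is dense in $\F(X)$ and all operators involved are bounded, identities checked on generators extend to all of $\F(X)$ by linearity and continuity.

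\textbf{Identity and composition.} First, $S_0\delta_x=\delta_x-\delta_0=\delta_x$, because $\delta_0=0$; hence $S_0=I_{\F(X)}$. Next, for $a,b,x\in X$,
\[
S_aS_b\delta_x=S_a(\delta_{x+b}-\delta_b)=(\delta_{x+a+b}-\delta_a)-(\delta_{a+b}-\delta_a)=\delta_{x+a+b}-\delta_{a+b}=S_{a+b}\delta_x ,
\]
so $S_aS_b=S_{a+b}$.

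\textbf{Surjectivity and the representation.} Taking $b=-a$ gives $S_aS_{-a}=S_{-a}S_a=S_0=I$. Thus each isometry $S_a$ is invertible, hence surjective. So $a\mapsto S_a$ is a homomorphism into $\operatorname{Isom}_{\mathrm{lin}}(\F(X))$.

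\textbf{Duality.} For $f\in\Lip_0(X)$ and $x\in X$,
\[
(S_a^*f)(\delta_x)=\langle S_a\delta_x,f\rangle=f(x+a)-f(a)=(\rho_af)(x).
\]
Since a functional in $\Lip_0(X)=\F(X)^*$ is determined by its values on the $\delta_x$, this gives $S_a^*=\rho_a$.

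\textbf{Vector-valued linearization.} For $f\in\Lip_0(X,Y)$, both $\widehat{f}\circ S_a$ and $\widehat{\rho_af}$ are bounded linear maps $\F(X)\to Y$. On generators,
\[
\widehat{f}(S_a\delta_x)=f(x+a)-f(a)=(\rho_af)(x)=\widehat{\rho_af}(\delta_x).
\]
By density, or directly by the uniqueness clause of Theorem~\ref{thm:free-universal-property} applied to $\rho_af$, the two maps coincide.

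None of these steps poses a real obstacle. The only points needing care are the use of $\delta_0=0$ in the identity and composition steps, and the passage from generators to the whole space, which is handled by the boundedness of the operators together with density.
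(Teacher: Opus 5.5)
Your proposal is correct and follows essentially the same route as the paper's proof: computing $S_aS_b$ on the generators $\delta_x$, getting surjectivity from $S_{-a}=S_a^{-1}$, evaluating $S_a^*f$ at $\delta_x$, and concluding $\widehat{\rho_af}=\widehat{f}\circ S_a$ by density. The only difference is that you explicitly verify $S_0=I_{\F(X)}$ via $\delta_0=0$, which the paper leaves implicit.
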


\begin{proof}
For any $a,b,x\in X$, it follows that
$$S_aS_b\delta_x
=
S_a(\delta_{x+b}-\delta_b)
=
\delta_{x+a+b}-\delta_a
-\delta_{a+b}+\delta_a
=
\delta_{x+a+b}-\delta_{a+b}
=
S_{a+b}\delta_x.$$

Thus
$S_aS_b=S_{a+b}$.
In particular, $S_{-a}=S_a^{-1}$,
so each $S_a$ is bijective.

Now, for any $f\in\Lip_0(X)$, 
$ (S_a^*f)(x)= (S_a\delta_x)(f)=
f(x+a)-f(a)=
(\rho_af)(x)$.
Hence $S_a^*=\rho_a$.

For the vector-valued assertion,
\[
(\widehat{f}\circ S_a)(\delta_x)=
\widehat{f}(\delta_{x+a}-\delta_a)
=
f(x+a)-f(a)
=
(\rho_af)(x)=
\widehat{\rho_af}(\delta_x).
\]
Since the linear span of $\{\delta_x:x\in X\}$ is dense in
$\F(X)$, the proof follows.
\end{proof}

%================================================
%================================================
\subsubsection{The translation orbit-difference space}
\label{subsubsec:translation-orbit-difference}
%================================================

We now apply the orbit-difference construction developed in the
preceding section to the recentered translation representation $S$ as defined in the Proposition \ref{prop:Sa-action}.
Accordingly, in analogy with the $(G,\chi)$-orbit-difference subspace
introduced in \eqref{eq:Y-G-chi}, we define
\begin{equation}\label{eq:translation-orbit-space}
\mathcal Y_X
:=
\overline{\operatorname{span}}
\left\{
S_a\mu-\mu:
a\in X,\ \mu\in\F(X)
\right\}.
\end{equation}
Since $\operatorname{span}\{\delta_x:x\in X\}$ is dense in $\F(X)$
and each $S_a-I_{\F(X)}$ is continuous, it follows that
\begin{equation}\label{eq:translation-generators}
\mathcal Y_X
=
\overline{\operatorname{span}}
\left\{
\delta_{x+a}-\delta_x-\delta_a:
x,a\in X
\right\}.
\end{equation}

As in the general fixed-point/orbit-difference correspondence, the
annihilator of $\mathcal Y_X$ is precisely the fixed-point space of
the dual representation. Indeed, Proposition~\ref{prop:Sa-action}
gives $S_a^*=\rho_a$, for all $a\in X$,
under the canonical identification $\F(X)^*=\Lip_0(X)$. Hence
\[
\mathcal Y_X^\perp
=
\left\{
f\in\Lip_0(X):
S_a^*f=f\text{ for every }a\in X
\right\}=
\Fix(\rho).
\]
By Theorem~\ref{thm:linear-fixed-points}, applied with scalar range,
$\Fix(\rho)=X^*$. Therefore
\begin{equation}\label{eq:translation-annihilator}
\mathcal Y_X^\perp=X^*.
\end{equation}

This abstract description of $\mathcal Y_X$ admits a concrete
identification in terms of the barycentre map
$\beta_X : \F(X)\xrightarrow[contraction]{onto} X$. Then $\beta_X^*$ is an isometry. Hence $\operatorname{ran}(\beta_X^*)=X^*$.

\begin{remark}\label{rem:kernel-orbit}
\begin{enumerate}
\item Since $(\ker\beta_X)^\perp
=
\operatorname{ran}(\beta_X^*)$, the bipolar theorem yields
\[
\mathcal Y_X
=
{}^\perp(\mathcal Y_X^\perp)
=
{}^\perp\bigl((\ker\beta_X)^\perp\bigr)
=
\ker(\beta_X).
\]

\item 
Thus we identify the kernel of the
barycentre map with the orbit-difference subspace associated with the
recentered translation representation. In particular,
\[
\ker(\beta_X)
=
\overline{\operatorname{span}}
\left\{
\delta_{x+a}-\delta_x-\delta_a:
x,a\in X
\right\}.
\]
Moreover, the generators of $\ker(\beta_X)$ are precisely the defects of
additivity of the canonical embedding
$\delta_X\colon X\longrightarrow\F(X)~ \mbox{given by}~
x\longmapsto\delta_x$,
since $\delta_{x+a}-\delta_x-\delta_a$
measures the failure of $\delta_X$ to satisfy
$\delta_{x+a}=\delta_x+\delta_a$.
Accordingly, the decomposition into the orbit-difference space and
its quotient separates two aspects of the canonical embedding:
$\ker(\beta_X)~~\text{records its failure to be additive part,}$
whereas $\F(X)/\ker(\beta_X)\cong X$
recovers the underlying linear structure of $X$.
\end{enumerate}
\end{remark}
%================================================
The preceding identifications yield the following equivalent
characterizations of linearity.

\begin{corollary}\label{thm:three-linear-characterizations}
Let $X$ and $Y$ be real Banach spaces. For
$f\in\Lip_0(X,Y)$, the following assertions are equivalent:
\begin{enumerate}
\item[(i)] $f\in\mathcal L(X,Y)$;
\item[(ii)] $\rho_af=f$ for every $a\in X$;
\item[(iii)] $\widehat f\circ S_a=\widehat f$ for every $a\in X$;
\item[(iv)] $\widehat f|_{\ker(\beta_X)}=0$.
\end{enumerate}
\end{corollary}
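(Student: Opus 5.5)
The plan is to prove the cycle (i)$\Leftrightarrow$(ii)$\Leftrightarrow$(iii)$\Leftrightarrow$(iv), relying on Theorem~\ref{thm:linear-fixed-points}, Proposition~\ref{prop:Sa-action} and Remark~\ref{rem:kernel-orbit}. The equivalence (i)$\Leftrightarrow$(ii) is exactly Theorem~\ref{thm:linear-fixed-points}, so nothing new is needed there.

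For (ii)$\Leftrightarrow$(iii), I use the vector-valued identity $\widehat{\rho_af}=\widehat f\circ S_a$ from Proposition~\ref{prop:Sa-action}. If $\rho_af=f$, then $\widehat f\circ S_a=\widehat{\rho_af}=\widehat f$. Conversely, if $\widehat f\circ S_a=\widehat f$, then $\widehat{\rho_af}=\widehat f$. Uniqueness of the linearization in Theorem~\ref{thm:free-universal-property} then gives $\rho_af=f$; concretely, evaluate both sides at $\delta_x$.

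For (iii)$\Leftrightarrow$(iv), the key input is the identification
\[
\ker(\beta_X)=\mathcal Y_X=\overline{\operatorname{span}}\{S_a\mu-\mu:a\in X,\ \mu\in\F(X)\}
\]
from Remark~\ref{rem:kernel-orbit}. Condition (iii) says exactly that $\widehat f(S_a\mu-\mu)=0$ for all $a\in X$ and $\mu\in\F(X)$. Since $\widehat f$ is linear and bounded, $\ker\widehat f$ is a closed subspace. Hence (iii) is equivalent to $\mathcal Y_X\subseteq\ker\widehat f$, which is (iv) once $\mathcal Y_X$ is replaced by $\ker(\beta_X)$.

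The only step I expect to need care is the equality $\mathcal Y_X=\ker(\beta_X)$. Remark~\ref{rem:kernel-orbit} derives it through scalar duality, using $\mathcal Y_X^\perp=X^*=\operatorname{ran}(\beta_X^*)$ together with the bipolar theorem. I will take that as established. Its validity depends only on $\beta_X$ being a quotient map $\F(X)\to X$ with $\beta_X\delta_x=x$, so that $\beta_X^*$ is an isometry onto $X^*$.

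A more direct route to (i)$\Rightarrow$(iv) is also available, and I may include it as a check. For $f\in\mathcal L(X,Y)$ we have $\widehat f=f\circ\beta_X$, since both sides are bounded linear maps that agree on every $\delta_x$. This immediately gives $\widehat f|_{\ker\beta_X}=0$, confirming one direction independently of the duality argument.
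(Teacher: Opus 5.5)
Your proposal is correct and follows the paper's proof essentially step for step: (i)$\Leftrightarrow$(ii) by Theorem~\ref{thm:linear-fixed-points}, (ii)$\Leftrightarrow$(iii) via $\widehat{\rho_af}=\widehat f\circ S_a$ together with uniqueness of linearization, and (iii)$\Leftrightarrow$(iv) via the identification $\ker(\beta_X)=\mathcal Y_X$ from Remark~\ref{rem:kernel-orbit}. You invoke the closedness of $\ker\widehat f$ explicitly to pass from the orbit differences to their closed span, a step the paper leaves implicit; your direct check that $\widehat f=f\circ\beta_X$ for linear $f$ is an optional addition.
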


\begin{proof}
The equivalence of (i) and (ii) follows from
Theorem~\ref{thm:linear-fixed-points}. By The Proposition
\eqref{prop:Sa-action}, it follows
$\widehat{\rho_af}=\widehat{f}\circ S_a$,
and the uniqueness of linearization therefore gives the equivalence
of $(ii)$ and $(iii)$.

Suppose that $(iii)$ holds. Then
$\widehat{f}(S_a\mu-\mu)=0$ for any $a\in X,\ \mu\in\F(X)$.
Now from Remark \ref{rem:kernel-orbit} we have
$\ker(\beta_X)
=
\mathcal Y_X
=
\overline{\operatorname{span}}
\{S_a\mu-\mu:a\in X,\ \mu\in\F(X)\}$ and hence
we obtain $(iv)$.

Conversely, let $\widehat{f}$ vanishes on $\ker(\beta_X)$, then
as $S_a\mu-\mu\in\ker(\beta_X)$;
for every $a\in X$ and $\mu\in\F(X)$ we have
$\widehat{f} \circ S_a=\widehat{f}$, which gives $(iii)$.
\end{proof}

%================================================
In particular, bounded linear mappings on $X$ correspond
isometrically to bounded linear operators on the translation
quotient.

\begin{corollary}\label{cor:linear-universal-property}
Let $X$ and $Y$ be real Banach spaces, and let
$q_X\colon\F(X)\to\F(X)/\mathcal Y_X$ be the quotient map.
For every $T\in\mathcal L(X,Y)$, there exists a unique
bounded linear operator
\[
\widetilde T\colon
\bigslant{\F(X)}{\mathcal Y_X}\longrightarrow Y
\]
such that
$\widetilde T\circ q_X\circ\delta_X=T$.
Moreover, $\|\widetilde T\|=\|T\|$, and the correspondence
$T\mapsto\widetilde T$ is a surjective linear isometry.
Consequently,
$\mathcal L(X,Y)
\cong
\mathcal L\left(
\bigslant{\F(X)}{\mathcal Y_X},Y
\right)
=
\mathcal L\left(
\bigslant{\F(X)}{\ker(\beta_X)},Y
\right)$.
\end{corollary}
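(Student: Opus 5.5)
The plan is to mimic the proof of Theorem~\ref{thm:universal-character}, with the recentered translation representation $S$ in place of $V$ and Theorem~\ref{thm:linear-fixed-points} in place of Proposition~\ref{prop:chi-fixed-points}. The identification of the quotient is then read off from Remark~\ref{rem:kernel-orbit}.

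\emph{Existence.} Let $T\in\mathcal L(X,Y)$. Then $T\in\Lip_0(X,Y)$ with $\Lip(T)=\|T\|$, so Theorem~\ref{thm:free-universal-property} gives a linearization $\widehat T\colon\F(X)\to Y$ with $\|\widehat T\|=\|T\|$. By additivity,
\[
\widehat T(\delta_{x+a}-\delta_x-\delta_a)=T(x+a)-T(x)-T(a)=0 .
\]
By \eqref{eq:translation-generators} and continuity, $\widehat T$ vanishes on $\mathcal Y_X$; alternatively, use (i)$\Rightarrow$(iv) of Corollary~\ref{thm:three-linear-characterizations} together with $\mathcal Y_X=\ker(\beta_X)$. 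Hence $\widehat T$ factors uniquely as $\widehat T=\widetilde T\circ q_X$, and since $q_X$ maps the open unit ball onto the open unit ball, $\|\widetilde T\|=\|\widehat T\|=\|T\|$. Moreover $\widetilde T\circ q_X\circ\delta_X=\widehat T\circ\delta_X=T$.

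\emph{Uniqueness.} $q_X(\operatorname{span}\delta_X(X))$ is dense in $\F(X)/\mathcal Y_X$. So two bounded operators agreeing on the vectors $q_X(\delta_x)$ coincide.

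\emph{Surjectivity of $T\mapsto\widetilde T$.} Given $R\in\mathcal L(\F(X)/\mathcal Y_X,Y)$, set $f=R\circ q_X\circ\delta_X$. Exactly as in the second half of Theorem~\ref{thm:universal-character}, $f\in\Lip_0(X,Y)$ with $\Lip(f)\le\|R\|$. Since $q_X(\delta_{x+a}-\delta_x-\delta_a)=0$, $f$ is additive, i.e.\ $\rho_af=f$ for all $a$, so $f\in\mathcal L(X,Y)$ by Theorem~\ref{thm:linear-fixed-points}. By uniqueness, $\widetilde f=R$. Linearity of $T\mapsto\widetilde T$ also follows from uniqueness, and the norm equality was shown above. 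Hence the map is a surjective linear isometry, which gives $\mathcal L(X,Y)\cong\mathcal L(\F(X)/\mathcal Y_X,Y)$. The final equality is simply the subspace identity $\mathcal Y_X=\ker(\beta_X)$ of Remark~\ref{rem:kernel-orbit}(1).

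\emph{Main obstacle.} The only nonroutine ingredient is $\mathcal Y_X=\ker(\beta_X)$, which rests on $\mathcal Y_X^\perp=X^*$ and the bipolar argument. The argument above avoids needing it except for the last equality, which is imported from Remark~\ref{rem:kernel-orbit}. One must also check that $\beta_X^*$ has range exactly $X^*\subseteq\Lip_0(X)$; this holds because $\beta_X^*\varphi=\varphi\circ\beta_X\circ\delta_X=\varphi$ on $X$.
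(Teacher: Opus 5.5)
Your proof is correct and takes essentially the same route as the paper, which gives no separate proof. The paper treats the corollary as immediate from the factorization argument of Theorem~\ref{thm:universal-character}, the equivalence (i)$\Leftrightarrow$(iv) of Corollary~\ref{thm:three-linear-characterizations}, and the identity $\mathcal Y_X=\ker(\beta_X)$ from Remark~\ref{rem:kernel-orbit}, and your existence, uniqueness and surjectivity steps spell out exactly that argument.
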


%================================================
%================================================
\subsubsection{Amenable averaging and the projection onto linear mappings}
%================================================

The fixed-point description obtained above also provides a
group-theoretic interpretation of the projection onto
$\mathcal L(X,Y)$ constructed in \cite{AM3}. Indeed, the additive group
$(X,+)$ is abelian and hence amenable. If $Y$ is a dual Banach space,
the invariant-mean construction of \cite{AM3} associates with
$f\in\Lip_0(X,Y)$ the mapping
\begin{equation}\label{eq:translation-projection}
(Pf)(z)
=
\mathfrak M_x
\bigl[x \mapsto (f(x+z)-f(x))\bigr],
\qquad z\in X,
\end{equation}
where $\mathfrak M$ is a norm-one translation-invariant
$Y$-valued mean. For details we refer \cite{GNFA}.

In terms of the recentered translation representation introduced in \ref{eq:rho-translation}, we have
\begin{equation}\label{eq:projection-orbit-average}
(Pf)(z)
=
\mathfrak M_x\bigl[x \mapsto(\rho_xf)(z)\bigr].
\end{equation}
Thus the projection constructed in \cite{AM3} is precisely the
invariant-mean averaging of the orbit of $f$ under the representation
$\rho$.

Recall from \cite{AM3} that this construction yields the following.

\begin{theorem}\label{thm:projection-linear-fixed}
Let $X$ be a real Banach space and let $Y$ be a dual Banach space.
Then $\mathcal L(X,Y)$ is $1$-complemented in $\Lip_0(X,Y)$. More
precisely, the operator $P$ defined by
\eqref{eq:translation-projection} is a norm-one projection
$P\colon\Lip_0(X,Y)\longrightarrow\mathcal L(X,Y)$.
\end{theorem}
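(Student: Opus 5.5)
We prove that the operator $P$ of \eqref{eq:translation-projection} is well defined, is a contraction into $\Lip_0(X,Y)$, takes values in $\Fix(\rho)=\mathcal L(X,Y)$, and restricts to the identity there.

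\textbf{Well-definedness.} Since $Y=Z^*$ is a dual space, we use a translation-invariant $Y$-valued mean $\mathfrak M$ on $\ell_\infty(X,Y)$. We obtain it from a scalar invariant mean $m$ on $\ell_\infty((X,+))$, which exists because $(X,+)$ is abelian and hence amenable. For bounded $\varphi\colon X\to Y$ we define $\mathfrak M_x[\varphi]\in Z^*$ by
$$\langle \mathfrak M_x[\varphi],z\rangle=m_x\bigl[\langle\varphi(x),z\rangle\bigr],\qquad z\in Z.$$
This gives $\|\mathfrak M_x[\varphi]\|\le\sup_x\|\varphi(x)\|$, linearity, invariance under translations of the variable $x$, and $\mathfrak M_x[c]=c$ for constants. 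For fixed $z$ we have $\|f(x+z)-f(x)\|\le\Lip(f)\|z\|$, so the function being averaged is bounded and $(Pf)(z)$ is well defined.

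\textbf{Contraction and $Pf(0)=0$.} For $z,w\in X$,
$$(Pf)(z)-(Pf)(w)=\mathfrak M_x\bigl[f(x+z)-f(x+w)\bigr],$$
and the integrand has norm at most $\Lip(f)\|z-w\|$. Hence $\Lip(Pf)\le\Lip(f)$. Clearly $(Pf)(0)=0$.

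\textbf{$Pf$ is a fixed point.} We show $Pf$ is additive. Write
$$f(x+z+w)-f(x)=\bigl[f(x+z+w)-f(x+w)\bigr]+\bigl[f(x+w)-f(x)\bigr].$$
Averaging in $x$ and using translation invariance $x\mapsto x+w$ on the first bracket gives $(Pf)(z+w)=(Pf)(z)+(Pf)(w)$. Equivalently, $\rho_a(Pf)=Pf$ for every $a\in X$. By Theorem~\ref{thm:linear-fixed-points}, additivity together with the Lipschitz bound gives $Pf\in\mathcal L(X,Y)$.

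\textbf{$P$ fixes linear maps, and the norm.} If $T\in\mathcal L(X,Y)$, then $T(x+z)-T(x)=T(z)$ is constant in $x$, so $PT=T$. Therefore $P$ is a contractive projection onto $\mathcal L(X,Y)$. Whenever $\mathcal L(X,Y)\neq\{0\}$, which holds for $X,Y$ nonzero, this forces $\|P\|=1$.

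\textbf{Main obstacle.} The delicate step is constructing the vector-valued invariant mean and checking that translation invariance and the norm bound transfer from $m$ through the weak$^*$ definition. This is exactly where duality of $Y$ is needed: for general $Y$ the average need not lie in $Y$. Once $\mathfrak M$ is in hand, the remaining steps are the routine computations above.
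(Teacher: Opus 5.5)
Your proof is correct and follows the same route as the paper. The paper gives no proof of its own: it cites \cite{AM3}, rewrites $P$ as the orbit average $(Pf)(z)=\mathfrak M_x[(\rho_xf)(z)]$, and identifies the range with $\Fix(\rho)=\mathcal L(X,Y)$ via Theorem~\ref{thm:linear-fixed-points}, which is exactly how you conclude, with your weak$^*$ construction of $\mathfrak M$ supplying the detail behind that citation. Your caveat that $\|P\|=1$ requires $X,Y\neq\{0\}$ is a valid correction, since the statement as written omits it.
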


In view of Theorem~\ref{thm:linear-fixed-points}, the range of this
projection may now be written as
$\operatorname{ran}(P)
=
\mathcal L(X,Y)
=
\Fix(\rho)$.
Consequently, the projection of \cite{AM3} admits the following
interpretation within the present framework:
$P
=
\text{invariant-mean averaging onto the fixed-point space of }\rho$.
Thus the invariant-mean construction from \cite{AM3} is an instance of
the general principle that averaging an amenable group action produces
a projection onto its fixed-point space.

%================================================
\subsubsection{The quotient by the linear mappings}
%================================================

The orbit-difference description obtained above also gives a
group-theoretic interpretation of the quotient results established in
\cite{AM3}. Recall that, for $f\in\Lip_0(X,Y)$, its linearization
$\widehat{f}\colon\F(X)\longrightarrow Y$
satisfies
$f\in\mathcal L(X,Y)
\quad\Longleftrightarrow\quad
\widehat{f}|_{\ker(\beta_X)}=0$.
Now, by the Remark~\ref{rem:kernel-orbit},
$\ker(\beta_X)=\mathcal Y_X$
where $\mathcal Y_X$ is precisely the orbit-difference subspace
associated with the recentered translation representation $S$.
Hence
\begin{equation}\label{eq:linear-orbit-kernel}
\mathcal L(X,Y)
=
\left\{
f\in\Lip_0(X,Y):
\widehat{f}|_{\mathcal Y_X}=0
\right\}.
\end{equation}

When $Y$ is an injective Banach space, the quotient result from
\cite{AM3}, together with
$\mathcal Y_X=\ker(\beta_X)$, therefore yields the canonical linear
isometric identifications
\begin{equation}\label{eq:quotient-linear-orbit}
\bigslant{\Lip_0(X,Y)}{\mathcal L(X,Y)}
\cong
\mathcal L(\mathcal Y_X,Y)
\cong
\mathcal L(\ker(\beta_X),Y).
\end{equation}

The formulation in \eqref{eq:quotient-linear-orbit} gives the earlier
quotient theorem a natural interpretation in terms of the present
group-action framework.
%================================================
\subsection{Recentered translations and two-Lipschitz mappings}
\label{subsec:two-lipschitz-applications}
%================================================

We now extend the recentered translation construction to
two-Lipschitz mappings. The additive group $X\times Y$ acts
by surjective linear isometries on the corresponding mapping
space, with bounded bilinear mappings as its fixed points.

Let $X$, $Y$, and $E$ be real Banach spaces. Recall that
$\BLipz(X,Y;E)$ consists of all mappings
$T\colon X\times Y\to E$ satisfying
$T(x,0)=T(0,y)=0$ for all $x\in X,\ y\in Y$, and $$
\|T(x,y)-T(x',y)-T(x,y')+T(x',y')\|
\leq C\|x-x'\|\|y-y'\|$$
for some $C\geq 0$ and all $x,x'\in X$, $y,y'\in Y$;
see \cite{TLOI}. We denote the least admissible constant
by $\|T\|_{\operatorname{BLip}}$ and write
$\mathscr B=\BLipz(X,Y;E)$.

\subsubsection{The recentered translation action}

For $a\in X$ and $b\in Y$, we define mappings
$\rho_a^X(T),\rho_b^Y(T)\colon X\times Y\to E$ given by
\begin{equation}\label{eq:coordinate-translations}
\begin{aligned}
\bigl(\rho_a^X(T)\bigr)(x,y)
&=T(x+a,y)-T(a,y),\\
\bigl(\rho_b^Y(T)\bigr)(x,y)
&=T(x,y+b)-T(x,b).
\end{aligned}
\end{equation}
The following proposition shows that these formulas define
operators on $\mathscr B$.

\begin{proposition}\label{prop:two-translation-action}
For every $a\in X$ and $b\in Y$, the operators
$\rho_a^X,\rho_b^Y\colon\mathscr B\longrightarrow\mathscr B$
are surjective linear isometries, which satisfy $\rho_a^X\rho_c^X=\rho_{a+c}^X,~~\rho_b^Y\rho_d^Y=\rho_{b+d}^Y,
~~\rho_a^X\rho_b^Y=\rho_b^Y\rho_a^X$
for all $c\in X, d \in Y$.

Consequently, the operators
$\rho_{(a,b)}(T):=\rho_a^X\bigl(\rho_b^Y(T)\bigr)$
define an isometric representation of the additive group
$X\times Y$ on $\mathscr B$. Explicitly,
\begin{equation}\label{rho-two-lip}
\begin{aligned}
\bigl(\rho_{(a,b)}(T)\bigr)(x,y)
={}&T(x+a,y+b)-T(x+a,b)\\
&-T(a,y+b)+T(a,b).
\end{aligned}
\end{equation}
Here products of operators denote composition on $\mathscr B$.
\end{proposition}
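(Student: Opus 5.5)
We verify the assertions of Proposition~\ref{prop:two-translation-action} directly from the definitions in \eqref{eq:coordinate-translations}, treating $\rho_a^X$ first and obtaining the statements for $\rho_b^Y$ by symmetry. For $T\in\mathscr B$, the map $\rho_a^X(T)$ vanishes on the axes: $\rho_a^X(T)(0,y)=T(a,y)-T(a,y)=0$, and $\rho_a^X(T)(x,0)=T(x+a,0)-T(a,0)=0$ because $T$ vanishes on $X\times\{0\}$. For the rectangle estimate, the correction terms $T(a,y)$ depend only on $y$. In the second difference over $x,x'$ and $y,y'$ they therefore cancel, since they appear once with $x$ and once with $x'$ and opposite signs. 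What remains is
\[
T(x+a,y)-T(x'+a,y)-T(x+a,y')+T(x'+a,y'),
\]
which is bounded by $\|T\|_{\BLip}\|x-x'\|\|y-y'\|$ because $(x+a)-(x'+a)=x-x'$. Hence $\rho_a^X(T)\in\mathscr B$ with $\|\rho_a^X(T)\|_{\BLip}\le\|T\|_{\BLip}$. Linearity in $T$ is clear.

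Next we check the composition law by direct evaluation, as in Proposition~\ref{prop:rho-translation-action}:
\[
\rho_a^X\rho_c^X(T)(x,y)=\bigl[T(x+a+c,y)-T(c,y)\bigr]-\bigl[T(a+c,y)-T(c,y)\bigr]=\rho_{a+c}^X(T)(x,y).
\]
Since $\rho_0^X=I_{\mathscr B}$, this gives $\rho_{-a}^X=(\rho_a^X)^{-1}$, so each $\rho_a^X$ is bijective. The isometry property then follows from the same sandwich argument used for $\rho_a$:
\[
\|T\|_{\BLip}=\|\rho_{-a}^X\rho_a^X T\|_{\BLip}\le\|\rho_a^X T\|_{\BLip}\le\|T\|_{\BLip}.
\]
The statements for $\rho_b^Y$ are obtained by interchanging the roles of the two variables.

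For commutativity we expand both orders. On one side,
\[
\rho_a^X\rho_b^Y(T)(x,y)=\rho_b^Y(T)(x+a,y)-\rho_b^Y(T)(a,y)=T(x+a,y+b)-T(x+a,b)-T(a,y+b)+T(a,b).
\]
Computing $\rho_b^Y\rho_a^X(T)(x,y)$ gives the same four terms. This expansion is the only slightly delicate point, and it amounts to careful bookkeeping of the four terms rather than a genuine obstacle. It also yields formula \eqref{rho-two-lip}.

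Finally, set $\rho_{(a,b)}=\rho_a^X\rho_b^Y$. Using the composition laws together with commutativity,
\[
\rho_{(a,b)}\rho_{(c,d)}=\rho_a^X\rho_b^Y\rho_c^X\rho_d^Y=\rho_a^X\rho_c^X\rho_b^Y\rho_d^Y=\rho_{(a+c,b+d)},
\]
and $\rho_{(0,0)}=I_{\mathscr B}$. Each $\rho_{(a,b)}$ is a composition of surjective linear isometries and is therefore itself a surjective linear isometry. Hence $(a,b)\mapsto\rho_{(a,b)}$ is an isometric representation of the additive group $X\times Y$ on $\mathscr B$.
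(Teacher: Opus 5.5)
Your proposal is correct and follows the paper's proof essentially step by step: you check vanishing on the axes, cancel the recentering terms in the second difference, derive the composition law by direct evaluation, use the sandwich argument for isometry and surjectivity, expand both orders for commutativity (which also gives the explicit formula), and compose to obtain the group law. The one point the paper states that you leave implicit is that $\rho_0^X=\rho_0^Y=I_{\mathscr B}$ holds because every $T\in\mathscr B$ vanishes on the coordinate axes.
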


\begin{proof}
Fix $T\in\mathscr B$ and $a\in X$. The mapping
$\rho_a^X(T)$ vanishes on the coordinate axes, since
$\bigl(\rho_a^X(T)\bigr)(0,y)=0$
and
$
\bigl(\rho_a^X(T)\bigr)(x,0)
=T(x+a,0)-T(a,0)=0$.
For a mapping $U\colon X\times Y\to E$, we write
\[
\Delta U(x,x';y,y')
=
U(x,y)-U(x',y)-U(x,y')+U(x',y').
\]
Cancellation of the recentering terms gives us
$\Delta\bigl(\rho_a^X(T)\bigr)(x,x';y,y')
=
\Delta T(x+a,x'+a;y,y')$.
Therefore,
\[
\bigl\|\Delta\bigl(\rho_a^X(T)\bigr)(x,x';y,y')\bigr\|
\leq
\|T\|_{\operatorname{BLip}}
\|(x+a)-(x'+a)\|\|y-y'\|=
\|T\|_{\operatorname{BLip}}
\|x-x'\|\|y-y'\|.
\]
Thus $\rho_a^X(T)\in\mathscr B$ and
$\|\rho_a^X(T)\|_{\operatorname{BLip}}
\leq\|T\|_{\operatorname{BLip}}$.
Interchanging the variables proves the corresponding
assertions for $\rho_b^Y$. Linearity of both operators
follows directly from \eqref{eq:coordinate-translations}.

For $a,c\in X$, we compute
\[
\begin{aligned}
\bigl(\rho_a^X(\rho_c^X(T))\bigr)(x,y)
&=
\bigl(\rho_c^X(T)\bigr)(x+a,y)
-
\bigl(\rho_c^X(T)\bigr)(a,y)\\
&=
\bigl[T(x+a+c,y)-T(c,y)\bigr]-
\bigl[T(a+c,y)-T(c,y)\bigr]\\
&=
T(x+a+c,y)-T(a+c,y)=
\bigl(\rho_{a+c}^X(T)\bigr)(x,y).
\end{aligned}
\]
Hence $\rho_a^X\rho_c^X=\rho_{a+c}^X$. Similarly,
$\rho_b^Y\rho_d^Y=\rho_{b+d}^Y$.
Since every $T\in\mathscr B$ vanishes on the coordinate axes,
$\rho_0^X=\rho_0^Y=I_{\mathscr B}$.
It follows that
$(\rho_a^X)^{-1}=\rho_{-a}^X$, and 
$(\rho_b^Y)^{-1}=\rho_{-b}^Y$.
Now, each operator and its inverse are contractions. In particular,
\[
\|T\|_{\operatorname{BLip}}
=
\|\rho_{-a}^X(\rho_a^X(T))\|_{\operatorname{BLip}}
\leq
\|\rho_a^X(T)\|_{\operatorname{BLip}}
\leq
\|T\|_{\operatorname{BLip}},
\]
so $\rho_a^X$ is an isometry. The same conclusion holds
for $\rho_b^Y$, and both operators are surjective.

Next, applying the definitions successively gives
\[
\begin{aligned}
\bigl(\rho_a^X(\rho_b^Y(T))\bigr)(x,y)
&=
\bigl(\rho_b^Y(T)\bigr)(x+a,y)
-
\bigl(\rho_b^Y(T)\bigr)(a,y)\\
&=
T(x+a,y+b)-T(x+a,b)-T(a,y+b)+T(a,b).
\end{aligned}
\]
Applying the operators in the reverse order gives
\[
\begin{aligned}
\bigl(\rho_b^Y(\rho_a^X(T))\bigr)(x,y)
&=
\bigl(\rho_a^X(T)\bigr)(x,y+b)
-
\bigl(\rho_a^X(T)\bigr)(x,b)\\
&=
T(x+a,y+b)-T(a,y+b)-T(x+a,b)+T(a,b).
\end{aligned}
\]
The two expressions agree. This proves both commutativity
and \eqref{rho-two-lip}.

Finally, for $(a,b),(c,d)\in X\times Y$,
\[
\rho_{(a,b)}\rho_{(c,d)}
=\rho_a^X\rho_b^Y\rho_c^X\rho_d^Y=\rho_a^X\rho_c^X\rho_b^Y\rho_d^Y=\rho_{a+c}^X\rho_{b+d}^Y=\rho_{(a+c,b+d)}.
\]
Moreover, $\rho_{(0,0)}=I_{\mathscr B}$, and each
$\rho_{(a,b)}$ is a surjective linear isometry.
Thus $\rho:=\left(\rho_{(a,b)}\right)_{a \in X,b \in Y}$ is an isometric representation, without any
continuity assumption made in the group.
\end{proof}

%================================================
\subsubsection{Bilinear mappings as fixed points}
%================================================

From \cite{TLOI} we have
$\Blin(X,Y;E)\subseteq\BLipz(X,Y;E)$ with $
\|B\|_{\operatorname{BLip}}=\|B\|$.
The following result is the two-variable counterpart of
Theorem~\ref{thm:linear-fixed-points}.

\begin{theorem}\label{bilinear-fixed}
For the representation $\rho$ constructed in
Proposition~\ref{prop:two-translation-action}, we have
\[
\Fix(\rho)
:=
\{T\in\BLipz(X,Y;E):
\rho_{(a,b)}(T)=T
\text{ for every }(a,b)\in X\times Y\}
=
\Blin(X,Y;E).
\]
\end{theorem}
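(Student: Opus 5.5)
We prove the two inclusions separately, following the pattern of Theorem~\ref{thm:linear-fixed-points}.

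\emph{Bilinear maps are fixed.} Let $B\in\Blin(X,Y;E)$. By \cite{TLOI}, $B\in\BLipz(X,Y;E)$. For all $a,x\in X$ and $b,y\in Y$, expand each term of \eqref{rho-two-lip} by bilinearity. The four terms give
\[
B(x+a,y+b)-B(x+a,b)-B(a,y+b)+B(a,b)=B(x+a,y)-B(a,y)=B(x,y).
\]
Hence $\rho_{(a,b)}(B)=B$.

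\emph{Fixed points are bilinear.} Let $T\in\Fix(\rho)$. First specialise to $b=0$. Since $T$ vanishes on the axes, \eqref{rho-two-lip} reduces to $\rho_{(a,0)}(T)=\rho_a^X(T)$. The fixed-point condition then reads
\[
T(x+a,y)=T(x,y)+T(a,y)\quad\text{for all }x,a\in X,\ y\in Y.
\]
So for each fixed $y$, the map $T(\cdot,y)$ is additive. Symmetrically, taking $a=0$ shows that for each fixed $x$, the map $T(x,\cdot)$ is additive.

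Alternatively, one can keep the full condition $\rho_{(a,b)}(T)=T$ and note that it is equivalent to the separate conditions. We do not need this, because $(a,0)$ and $(0,b)$ already generate $X\times Y$.

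\emph{Continuity.} The main point is to obtain continuity, and hence real homogeneity, of each partial map. Fix $y$. Because $T(0,y)=T(x',0)=0$, the defining estimate with $y'=0$ gives
\[
\|T(x,y)-T(x',y)\|\le\|T\|_{\operatorname{BLip}}\|x-x'\|\|y\|.
\]
Thus $T(\cdot,y)$ is Lipschitz. An additive continuous map between real Banach spaces is $\mathbb R$-linear, by the same argument as in Theorem~\ref{thm:linear-fixed-points}: it is $\mathbb Q$-linear, and density of $\mathbb Q$ in $\mathbb R$ together with continuity extends this to $\mathbb R$. The same reasoning applies to $T(x,\cdot)$, so $T$ is bilinear.

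\emph{Boundedness.} Taking $x'=0$ and $y'=0$ in the defining inequality gives
\[
\|T(x,y)\|\le\|T\|_{\operatorname{BLip}}\|x\|\|y\|.
\]
Hence $T\in\Blin(X,Y;E)$. The only nontrivial ingredient is this use of the vanishing on the axes, which turns the two-Lipschitz estimate into Lipschitz continuity of each section.
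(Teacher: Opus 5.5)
Your proof is correct and follows the paper's argument essentially step by step. You use the bilinear expansion for one inclusion, and for the other you specialize to $b=0$ and $a=0$ to get separate additivity, then apply the two-Lipschitz estimate with $y'=0$ (resp.\ $x'=0$) for continuity of the sections and with $x'=y'=0$ for the bound $\|T(x,y)\|\le\|T\|_{\operatorname{BLip}}\|x\|\|y\|$. One small slip: in the continuity step the vanishing you actually use is $T(x,0)=T(x',0)=0$ rather than $T(0,y)=0$, which is harmless since $T$ vanishes on both axes.
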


\begin{proof}
Let $B\in\Blin(X,Y;E)$. Then
\[
\bigl(\rho_{(a,b)}(B)\bigr)(x,y)=
B(x+a,y+b)-B(x+a,b)-B(a,y+b)+B(a,b)=B(x,y).
\]
Thus $B\in\Fix(\rho)$.

Conversely, let $T\in\Fix(\rho)$. Taking $b=0$ and then
$a=0$ in \eqref{rho-two-lip}, and using the vanishing of
$T$ on the coordinate axes, gives
$T(x+a,y)=T(x,y)+T(a,y)$
and
$T(x,y+b)=T(x,y)+T(x,b)$.
Hence $T$ is additive in each variable.
Put $C=\|T\|_{\operatorname{BLip}}$. Now from the two-Lipschitz
estimate we have
\[
\|T(x,y)-T(x',y)\|
\leq C\|x-x'\|\|y\|
~~\mbox{and}~~
\|T(x,y)-T(x,y')\|
\leq C\|x\|\|y-y'\|.
\]
Each partial mapping is therefore continuous and additive,
and hence real-linear. Moreover,
$\|T(x,y)\|\leq C\|x\|\|y\|$
Thus $T$ is bounded bilinear.
\end{proof}

%================================================
\subsubsection{The induced action on the tensor linearization space}
%================================================

Set
$\mathscr F_{X,Y}
=
\F(X)\widehat\otimes_\pi\F(Y)$.
The linearization of two-Lipschitz mappings gives a canonical
surjective linear isometry (see \cite{TLOI}, \cite{AM1})
\[
\BLipz(X,Y;E)
\longrightarrow
\mathcal L(\mathscr F_{X,Y},E),
\qquad T\longmapsto T_L,
\]
where
$T_L(\delta_x\otimes\delta_y)=T(x,y)$ with $
\|T_L\|=\|T\|_{\operatorname{BLip}}$.

Now we consider the recentered translation operators (as in \ref{prop:Sa-action}) on the
Lipschitz-free spaces:
$S_a\delta_x=\delta_{x+a}-\delta_a,$ and $
R_b\delta_y=\delta_{y+b}-\delta_b$.
By Proposition~\ref{prop:Sa-action}, applied to $X$ and $Y$,
respectively, these are surjective linear isometries satisfying
$S_aS_c=S_{a+c},$ and $R_bR_d=R_{b+d}$.
For $(a,b)\in X\times Y$, we define
$\Sigma_{(a,b)}
=
S_a\widehat\otimes_\pi R_b
\colon\mathscr F_{X,Y}\longrightarrow\mathscr F_{X,Y}$.

\begin{proposition}\label{dual-actions}
The family $(\Sigma_{(a,b)})_{(a,b)\in X\times Y}$ is an
isometric representation of the additive group $X\times Y$
on $\mathscr F_{X,Y}$. Moreover, for
$T\in\BLipz(X,Y;E)$,
$\bigl(\rho_{(a,b)}(T)\bigr)_L
=
T_L\circ\Sigma_{(a,b)}$.
In particular, under the scalar-valued identification
$\BLipz(X,Y;\mathbb R)\cong\mathscr F_{X,Y}^*$
we have
$\Sigma_{(a,b)}^*=\rho_{(a,b)}$.
\end{proposition}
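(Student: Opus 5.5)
The plan is to reduce every assertion to the elementary tensors $\delta_x\otimes\delta_y$, which span a dense subspace of $\mathscr F_{X,Y}$. The algebraic parts of the representation then follow from Proposition~\ref{prop:Sa-action} together with functoriality of the projective tensor product.

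First, the representation property. By Proposition~\ref{prop:Sa-action}, $S_a$ and $R_b$ are surjective linear isometries. The projective tensor product of bounded operators satisfies $\|S_a\whot R_b\|\le\|S_a\|\|R_b\|=1$. It is functorial: $(S_a\whot R_b)(S_c\whot R_d)=S_aS_c\whot R_bR_d$, since the two sides agree on elementary tensors and are bounded. Hence $\Sigma_{(a,b)}\Sigma_{(c,d)}=S_{a+c}\whot R_{b+d}=\Sigma_{(a+c,b+d)}$ and $\Sigma_{(0,0)}=I$. In particular $\Sigma_{(-a,-b)}$ is a contractive inverse of $\Sigma_{(a,b)}$. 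A contraction with a contractive inverse is a surjective isometry, exactly as in the proofs of Propositions~\ref{prop:rho-translation-action} and~\ref{prop:two-translation-action}.

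Second, the intertwining identity $\bigl(\rho_{(a,b)}(T)\bigr)_L=T_L\circ\Sigma_{(a,b)}$. Both sides are bounded linear maps $\mathscr F_{X,Y}\to E$. The left side is bounded because $\rho_{(a,b)}(T)\in\mathscr B$ by Proposition~\ref{prop:two-translation-action}, and the linearization is isometric. So it suffices to check equality on $\delta_x\otimes\delta_y$ and then extend by linearity and density. We compute
\[
T_L\bigl(\Sigma_{(a,b)}(\delta_x\otimes\delta_y)\bigr)=T_L\bigl((\delta_{x+a}-\delta_a)\otimes(\delta_{y+b}-\delta_b)\bigr).
\]
Expanding bilinearly and using $T_L(\delta_u\otimes\delta_v)=T(u,v)$ gives $T(x+a,y+b)-T(x+a,b)-T(a,y+b)+T(a,b)$. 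This is $\bigl(\rho_{(a,b)}(T)\bigr)(x,y)$ by \eqref{rho-two-lip}, which in turn equals $\bigl(\rho_{(a,b)}(T)\bigr)_L(\delta_x\otimes\delta_y)$.

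Third, the scalar identification. Take $E=\mathbb R$, where the duality is $\langle\mu,T\rangle=T_L(\mu)$. Then $\langle\mu,\Sigma_{(a,b)}^*T\rangle=T_L(\Sigma_{(a,b)}\mu)=\langle\mu,\rho_{(a,b)}(T)\rangle$ for all $\mu$, so $\Sigma_{(a,b)}^*=\rho_{(a,b)}$.

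The only point that needs care is density of $\operatorname{span}\{\delta_x\otimes\delta_y\}$ in $\mathscr F_{X,Y}$. This holds because the algebraic tensor product is dense in the completion, and $\operatorname{span}\delta(X)$ and $\operatorname{span}\delta(Y)$ are dense in $\F(X)$ and $\F(Y)$, respectively. Since the projective norm satisfies $\|u\otimes v\|_\pi=\|u\|\|v\|$, approximating each factor separately approximates every elementary tensor $u\otimes v$. Everything else is routine bookkeeping.
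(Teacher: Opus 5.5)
Your proposal is correct and follows essentially the same route as the paper. Contractivity and the homomorphism property of $\Sigma_{(a,b)}=S_a\whot R_b$ give the contractive inverse $\Sigma_{(-a,-b)}$, hence surjective isometries; the intertwining identity is checked on $\delta_x\otimes\delta_y$ by expanding $(\delta_{x+a}-\delta_a)\otimes(\delta_{y+b}-\delta_b)$ and extended by density, with the adjoint identity read off in the scalar case. Your arguments for functoriality of $\whot$ and for density of $\operatorname{span}\{\delta_x\otimes\delta_y\}$ just make explicit what the paper leaves implicit.
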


\begin{proof}
The projective tensor product construction gives
$\|\Sigma_{(a,b)}\|\leq1$. Furthermore,
\[
\Sigma_{(a,b)}\Sigma_{(c,d)}
=
\Sigma_{(a+c,b+d)},
\qquad
\Sigma_{(0,0)}=I.
\]
In particular, $\Sigma_{(-a,-b)}$ is a contractive inverse
of $\Sigma_{(a,b)}$. Hence every $\Sigma_{(a,b)}$ is a
surjective linear isometry.
Now, for canonical elementary tensors, we have
\begin{equation}\label{Sigma-elementary}
\Sigma_{(a,b)}(\delta_x\otimes\delta_y)
=
(\delta_{x+a}-\delta_a)
\otimes(\delta_{y+b}-\delta_b).
\end{equation}
Consequently,
\[
\begin{aligned}
(T_L\circ\Sigma_{(a,b)})(\delta_x\otimes\delta_y)
&=
T(x+a,y+b)-T(x+a,b)-T(a,y+b)+T(a,b)\\
&=
\bigl(\rho_{(a,b)}(T)\bigr)(x,y).
\end{aligned}
\]
Since the canonical elementary tensors have dense linear
span in $\mathscr F_{X,Y}$, uniqueness of linearization
proves the asserted identity. The scalar-valued adjoint
identity follows immediately.
\end{proof}

%================================================
\subsubsection{The orbit-difference space and its quotient}
%================================================

In accordance with the preceding orbit-difference
constructions, we define
\begin{equation}\label{eq:two-orbit-space}
\mathcal Z_{X,Y}
=
\overline{\operatorname{span}}
\left\{
\Sigma_{(a,b)}u-u:
(a,b)\in X\times Y,\ u\in\mathscr F_{X,Y}
\right\}.
\end{equation}
The closure is taken in the projective tensor norm.
By density and \eqref{Sigma-elementary}, this is equivalently
the closed linear span of
$(\delta_{x+a}-\delta_a)
\otimes(\delta_{y+b}-\delta_b)
-
\delta_x\otimes\delta_y,
$ for any $ x,a\in X,\ y,b\in Y$.

\begin{proposition}\label{tensor-annihilator}
Under the canonical identification
$\mathscr F_{X,Y}^*=\BLipz(X,Y;\mathbb R)$, 
$~~\mathcal Z_{X,Y}^{\perp}
=
\Blin(X,Y;\mathbb R)$.
\end{proposition}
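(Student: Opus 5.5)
The plan is to mirror the proof of Theorem~\ref{thm:annihilator-character}, with Proposition~\ref{dual-actions} taking over the role that Proposition~\ref{prop:Vg-action} played there. Under the scalar identification $\mathscr F_{X,Y}^*=\BLipz(X,Y;\mathbb R)$, a functional $T$ acts on $\mathscr F_{X,Y}$ through $T_L$. By definition of the annihilator, $T\in\mathcal Z_{X,Y}^\perp$ exactly when
\[
T_L(\Sigma_{(a,b)}u-u)=0 \qquad \text{for all } (a,b)\in X\times Y,\ u\in\mathscr F_{X,Y}.
\]
Equivalently, $T_L\circ\Sigma_{(a,b)}=T_L$ for every $(a,b)$. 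The annihilator of a closed span equals the annihilator of its generating set, so no closure issues arise in this step.

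The next step converts this into a condition on $T$ itself. By Proposition~\ref{dual-actions}, $T_L\circ\Sigma_{(a,b)}=(\rho_{(a,b)}(T))_L$. Since the linearization $T\mapsto T_L$ is injective (it is an isometry), the identity $T_L\circ\Sigma_{(a,b)}=T_L$ holds if and only if $\rho_{(a,b)}(T)=T$. Hence
\[
\mathcal Z_{X,Y}^\perp=\Fix(\rho)
\]
for the scalar-valued representation of Proposition~\ref{prop:two-translation-action}.

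Theorem~\ref{bilinear-fixed}, applied with $E=\mathbb R$, then gives $\Fix(\rho)=\Blin(X,Y;\mathbb R)$, which is the claim.

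If one prefers to argue through generators, the description of $\mathcal Z_{X,Y}$ as the closed span of the elements $(\delta_{x+a}-\delta_a)\otimes(\delta_{y+b}-\delta_b)-\delta_x\otimes\delta_y$ gives the same result. The pairing of $T$ with such a generator is $(\rho_{(a,b)}T)(x,y)-T(x,y)$, so vanishing on all generators is again exactly the fixed-point condition.

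The only step needing care is the identification of adjoints. We must check that under the duality $\langle u,T\rangle=T_L(u)$, the operator $\Sigma_{(a,b)}^*$ really is $\rho_{(a,b)}$ on all of $\mathscr F_{X,Y}$, not just on elementary tensors. This follows from the density of $\operatorname{span}\{\delta_x\otimes\delta_y\}$ in the projective tensor product together with the boundedness of $\Sigma_{(a,b)}$ and $T_L$, which is precisely what Proposition~\ref{dual-actions} records. So this step should be routine.
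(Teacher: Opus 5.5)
Your proposal is correct and follows the paper's own proof: the paper also rewrites membership in $\mathcal Z_{X,Y}^\perp$ as $\Sigma_{(a,b)}^*T=T$ for all $(a,b)$, uses Proposition~\ref{dual-actions} to identify this with $T\in\Fix(\rho)$, and concludes with Theorem~\ref{bilinear-fixed} for $E=\mathbb R$. Your remarks that a functional annihilates a closed span as soon as it annihilates the generators, and that $T\mapsto T_L$ is injective, simply make explicit two steps the paper leaves implicit.
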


\begin{proof}
A functional $T\in\mathscr F_{X,Y}^*$ annihilates
$\mathcal Z_{X,Y}$ if and only if
$\Sigma_{(a,b)}^*T=T$ for any  $((a,b)\in X\times Y)$.
By Proposition~\ref{dual-actions}, this is equivalent to
$T\in\Fix(\rho)$. The conclusion follows from
Theorem~\ref{bilinear-fixed}.
\end{proof}

The one-variable identity
$\mathcal Y_X=\ker(\beta_X)$ has the following tensor
counterpart. Let
$Q=\beta_X\widehat\otimes_\pi\beta_Y
\colon
\mathscr F_{X,Y}\longrightarrow X\widehat\otimes_\pi Y$.

\begin{theorem}\label{kernel-orbit}
We have
$\mathcal Z_{X,Y}=\ker Q$.
Moreover, $Q$ induces a surjective linear isometry
$\widetilde Q\colon
\bigslant{\mathscr F_{X,Y}}{\mathcal Z_{X,Y}}
\longrightarrow X\widehat\otimes_\pi Y$,
determined by
$\widetilde Q
\bigl(\delta_x\otimes\delta_y+\mathcal Z_{X,Y}\bigr)
=
x\otimes y$.
\end{theorem}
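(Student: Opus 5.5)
Mirroring the one-variable argument of Remark~\ref{rem:kernel-orbit}, I will show that $\ker Q$ and $\mathcal Z_{X,Y}$ have the same annihilator in $\mathscr F_{X,Y}^*=\BLipz(X,Y;\mathbb R)$ and then apply the bipolar theorem.

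\textbf{Properties of $Q$.} Since $\beta_X\colon\F(X)\to X$ and $\beta_Y\colon\F(Y)\to Y$ are norm-one quotient maps (surjective contractions that are isometries on the adjoint side), the projective tensor product of two quotient maps is again a quotient map. Hence $Q=\beta_X\whot\beta_Y$ is a surjective contraction onto $X\whot Y$, and its adjoint $Q^*$ is an isometry. Under the identifications $(X\whot Y)^*=\Blin(X,Y;\mathbb R)$ and $\mathscr F_{X,Y}^*=\BLipz(X,Y;\mathbb R)$, the adjoint sends a bilinear form $B$ to the two-Lipschitz map
\[
(x,y)\mapsto \langle Q(\delta_x\otimes\delta_y),B\rangle=B(x,y),
\]
because $\beta_X\delta_x=x$ and $\beta_Y\delta_y=y$. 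Thus $Q^*$ is the inclusion $\Blin(X,Y;\mathbb R)\hookrightarrow\BLipz(X,Y;\mathbb R)$, and
\[
\operatorname{ran}(Q^*)=\Blin(X,Y;\mathbb R).
\]

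\textbf{Annihilators.} Since $Q$ is surjective, it has closed range, so $(\ker Q)^\perp=\operatorname{ran}(Q^*)=\Blin(X,Y;\mathbb R)$. Proposition~\ref{tensor-annihilator} gives $\mathcal Z_{X,Y}^\perp=\Blin(X,Y;\mathbb R)$. Both $\ker Q$ and $\mathcal Z_{X,Y}$ are norm-closed, so the bipolar theorem yields
\[
\mathcal Z_{X,Y}={}^\perp(\mathcal Z_{X,Y}^\perp)={}^\perp\bigl((\ker Q)^\perp\bigr)=\ker Q.
\]
The inclusion $\mathcal Z_{X,Y}\subseteq\ker Q$ can also be checked directly: on the generators,
\[
Q\bigl((\delta_{x+a}-\delta_a)\otimes(\delta_{y+b}-\delta_b)-\delta_x\otimes\delta_y\bigr)=x\otimes y-x\otimes y=0.
\]

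\textbf{The induced isometry.} Because $Q$ is a quotient map with kernel $\mathcal Z_{X,Y}$, it factors as a surjective linear isometry
\[
\widetilde Q\colon \mathscr F_{X,Y}/\mathcal Z_{X,Y}\longrightarrow X\whot Y
\]
with $\widetilde Q(\delta_x\otimes\delta_y+\mathcal Z_{X,Y})=x\otimes y$.

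\textbf{Main obstacle.} The key step is showing that $Q$ is a quotient map, i.e.\ that the open unit ball is carried onto the open unit ball. I would prove this with the standard projective-tensor fact that $\pi$-tensor products of quotient maps are quotient maps. Concretely, for an element $\sum_n x_n\otimes y_n$ with $\sum_n\|x_n\|\|y_n\|<1$, choose the lift $\sum_n\delta_{x_n}\otimes\delta_{y_n}$: its projective norm is at most $\sum_n\|x_n\|\|y_n\|$ since $\|\delta_x\|=\|x\|$, and it maps to the given element under $Q$. If this step is needed only for the adjoint statement, the alternative is to verify directly that $Q^*$ is the isometric inclusion, using $\|B\|_{\BLip}=\|B\|$ from \cite{TLOI}. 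Surjectivity of an operator with isometric adjoint then supplies the closed range needed above.
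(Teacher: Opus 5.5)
Your proposal is correct and follows essentially the same route as the paper: $Q$ is a metric quotient map, so $(\ker Q)^\perp=\operatorname{ran}(Q^*)=\Blin(X,Y;\mathbb R)=\mathcal Z_{X,Y}^\perp$ by Proposition~\ref{tensor-annihilator}, and then the bipolar theorem together with the quotient property of $Q$ give both $\mathcal Z_{X,Y}=\ker Q$ and the isometry $\widetilde Q$. Your explicit lift $\sum_n\delta_{x_n}\otimes\delta_{y_n}$, which justifies that $Q$ is a quotient map, and your direct check of $\mathcal Z_{X,Y}\subseteq\ker Q$ on generators are useful details that the paper either cites as standard or leaves implicit.
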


\begin{proof}
The barycentre maps $\beta_X$ and $\beta_Y$ are metric
quotient maps. Since completed projective tensor products
preserve metric quotient maps, $Q$ is a metric quotient
map onto $X\widehat\otimes_\pi Y$. In particular,
$(\ker Q)^\perp=\operatorname{ran}(Q^*)$.

Under the canonical identification
$(X\widehat\otimes_\pi Y)^*
=
\Blin(X,Y;\mathbb R)$,
the adjoint $Q^*$ sends a bounded bilinear form $B$ to the
functional on $\mathscr F_{X,Y}$ satisfying
$(Q^*B)(\delta_x\otimes\delta_y)=B(x,y)$.
Thus its range consists precisely of bounded bilinear
forms viewed as two-Lipschitz forms. Therefore, by
Proposition~\ref{tensor-annihilator}, we have
$(\ker Q)^\perp
=
\Blin(X,Y;\mathbb R)
=
\mathcal Z_{X,Y}^\perp$.
Since $\ker Q$ and $\mathcal Z_{X,Y}$ are norm-closed
subspaces, equality of their annihilators implies
$\ker Q=\mathcal Z_{X,Y}$.

The asserted isometric identification now follows from
the metric quotient property of $Q$.
\end{proof}

\begin{remark}
The relation with the one-variable construction can also
be expressed as
\[
\mathcal Z_{X,Y}
=
\overline{
\ker(\beta_X)\otimes\F(Y)
+
\F(X)\otimes\ker(\beta_Y)
}^{\,\mathscr F_{X,Y}}.
\]
Here the tensor products inside the closure are algebraic
tensor products, viewed inside $\mathscr F_{X,Y}$.
Indeed, this is the kernel formula for the projective
tensor product of the two quotient maps.

Thus the translation quotient imposes the linearity
relations in each tensor factor and recovers
$X\widehat\otimes_\pi Y$.
\end{remark}
The preceding results give the following characterization
of bounded bilinear mappings.

\begin{corollary}\label{cor:bilinear-characterizations}
For $T\in\BLipz(X,Y;E)$, the following are equivalent:
\begin{enumerate}
\item[(i)] $T\in\Blin(X,Y;E)$;
\item[(ii)] $\rho_{(a,b)}(T)=T$ for every $(a,b)\in X\times Y$;
\item[(iii)] $T_L\circ\Sigma_{(a,b)}=T_L$ for every
$(a,b)\in X\times Y$;
\item[(iv)] $T_L|_{\ker Q}=0$.
\end{enumerate}
\end{corollary}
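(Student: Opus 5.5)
The four conditions are linked by the chain (i)$\Leftrightarrow$(ii)$\Leftrightarrow$(iii)$\Leftrightarrow$(iv), exactly as in Corollary~\ref{thm:three-linear-characterizations}, with the bilinear results in place of the linear ones. The equivalence (i)$\Leftrightarrow$(ii) is Theorem~\ref{bilinear-fixed}. For (ii)$\Leftrightarrow$(iii), Proposition~\ref{dual-actions} gives $\bigl(\rho_{(a,b)}(T)\bigr)_L=T_L\circ\Sigma_{(a,b)}$. The linearization $T\mapsto T_L$ from $\BLipz(X,Y;E)$ onto $\mathcal L(\mathscr F_{X,Y},E)$ is injective, being an isometry. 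Hence $\rho_{(a,b)}(T)=T$ holds if and only if $T_L\circ\Sigma_{(a,b)}=T_L$, for each fixed $(a,b)$.

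For (iii)$\Rightarrow$(iv), condition (iii) says that $T_L$ vanishes on every vector $\Sigma_{(a,b)}u-u$. Since $T_L$ is linear and bounded, it then vanishes on their closed linear span $\mathcal Z_{X,Y}$. By Theorem~\ref{kernel-orbit}, this span equals $\ker Q$. For (iv)$\Rightarrow$(iii), each $\Sigma_{(a,b)}u-u$ lies in $\mathcal Z_{X,Y}=\ker Q$, again by Theorem~\ref{kernel-orbit}. So $T_L(\Sigma_{(a,b)}u-u)=0$ for all $u$, which is exactly $T_L\circ\Sigma_{(a,b)}=T_L$.

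I do not expect a serious obstacle, since the corollary only assembles earlier results. The one point needing care is that Theorem~\ref{kernel-orbit} and Proposition~\ref{tensor-annihilator} were proved with scalar duality, whereas here $E$ is an arbitrary Banach space. The argument above avoids this: it uses only the set equality $\mathcal Z_{X,Y}=\ker Q$, which does not depend on $E$, together with linearity and continuity of the $E$-valued operator $T_L$. No scalar reduction is needed, and no Hahn--Banach argument over $E^*$ is required.
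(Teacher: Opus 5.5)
Your proposal is correct and follows the paper's proof essentially verbatim: (i)$\Leftrightarrow$(ii) by Theorem~\ref{bilinear-fixed}, (ii)$\Leftrightarrow$(iii) via the intertwining identity of Proposition~\ref{dual-actions} together with injectivity of $T\mapsto T_L$, and (iii)$\Leftrightarrow$(iv) because $T_L$ vanishes on all orbit differences exactly when it vanishes on their closed span, which is $\ker Q$ by Theorem~\ref{kernel-orbit}. Your remark that only the $E$-independent identity $\mathcal Z_{X,Y}=\ker Q$ is needed for an arbitrary Banach space $E$ spells out what the paper leaves implicit.
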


\begin{proof}
The equivalence of \textup{(i)} and \textup{(ii)} is
Theorem~\ref{bilinear-fixed}, and the equivalence of
\textup{(ii)} and \textup{(iii)} follows from
Proposition~\ref{dual-actions}.
Finally, \textup{(iii)} holds precisely when $T_L$
annihilates the closed span of all orbit differences,
which equals $\ker Q$ by Theorem~\ref{kernel-orbit}.
\end{proof}

The orbit quotient therefore has the expected bilinear
universal property.

\begin{corollary}\label{cor:bilinear-universal-property}
There are canonical surjective linear isometries
\[
\Blin(X,Y;E)
\cong
\mathcal L\left(
\bigslant{\mathscr F_{X,Y}}{\mathcal Z_{X,Y}},E
\right)
\cong
\mathcal L(X\widehat\otimes_\pi Y,E).
\]
The first correspondence associates with $B$ the unique
bounded linear operator $\widetilde B$ satisfying
$\widetilde B
\bigl(\delta_x\otimes\delta_y+\mathcal Z_{X,Y}\bigr)
=
B(x,y)$.
Moreover, $\|\widetilde B\|=\|B\|$.
\end{corollary}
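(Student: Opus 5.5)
The plan is to assemble the two isometries from results already proved, mirroring the proof of Theorem~\ref{thm:universal-character} in the bilinear setting. For the first correspondence, start from $B\in\Blin(X,Y;E)$. By \cite{TLOI}, $B\in\BLipz(X,Y;E)$ with $\|B\|_{\operatorname{BLip}}=\|B\|$. Hence the linearization $B_L\in\mathcal L(\mathscr F_{X,Y},E)$ exists, with $\|B_L\|=\|B\|$ and $B_L(\delta_x\otimes\delta_y)=B(x,y)$. Corollary~\ref{cor:bilinear-characterizations}, via (i)$\Rightarrow$(iv) together with Theorem~\ref{kernel-orbit}, gives $B_L|_{\mathcal Z_{X,Y}}=0$. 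So $B_L$ factors uniquely through the quotient map $q\colon\mathscr F_{X,Y}\to\mathscr F_{X,Y}/\mathcal Z_{X,Y}$ as $B_L=\widetilde B\circ q$. This $\widetilde B$ has the stated values on the classes $\delta_x\otimes\delta_y+\mathcal Z_{X,Y}$. Uniqueness holds because these classes have dense span in the quotient, as $q$ is surjective and elementary tensors $\delta_x\otimes\delta_y$ span a dense subspace of $\mathscr F_{X,Y}$.

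For the norm, use the standard fact that factoring through a quotient by a closed subspace contained in the kernel preserves the operator norm. This holds because $q$ maps the open unit ball onto the open unit ball. It gives $\|\widetilde B\|=\|B_L\|=\|B\|$.

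For surjectivity, take $S\in\mathcal L(\mathscr F_{X,Y}/\mathcal Z_{X,Y},E)$. Then $S\circ q\in\mathcal L(\mathscr F_{X,Y},E)$. By the surjective isometry $T\mapsto T_L$, it equals $T_L$ for a unique $T\in\BLipz(X,Y;E)$, namely $T(x,y)=S(q(\delta_x\otimes\delta_y))$. Since $T_L$ vanishes on $\mathcal Z_{X,Y}=\ker Q$, the implication (iv)$\Rightarrow$(i) of Corollary~\ref{cor:bilinear-characterizations} gives $T\in\Blin(X,Y;E)$, and $\widetilde T=S$. Linearity of $B\mapsto\widetilde B$ is clear from uniqueness.

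For the second isometry, use Theorem~\ref{kernel-orbit}: $\widetilde Q$ is a surjective linear isometry onto $X\widehat\otimes_\pi Y$ with $\widetilde Q(\delta_x\otimes\delta_y+\mathcal Z_{X,Y})=x\otimes y$. Composition $S\mapsto S\circ\widetilde Q^{-1}$ is then a surjective linear isometry of the operator spaces. As a consistency check, $\widetilde B\circ\widetilde Q^{-1}$ sends $x\otimes y$ to $B(x,y)$. So the composite correspondence is the classical identification $\Blin(X,Y;E)\cong\mathcal L(X\widehat\otimes_\pi Y,E)$.

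I expect the only delicate point to be the norm equality for the factored operator together with the density argument for uniqueness. Both are routine, given that Theorem~\ref{kernel-orbit} already identifies $\mathcal Z_{X,Y}$ with $\ker Q$.
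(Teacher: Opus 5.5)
Your argument is correct, but it is organized differently from the paper's proof. The paper's proof is a one-liner. It takes the classical universal property $\Blin(X,Y;E)\cong\mathcal L(X\widehat\otimes_\pi Y,E)$, $B\mapsto\widehat B$ with $\|\widehat B\|=\|B\|$, and pulls it back along the isometry $\widetilde Q$ of Theorem~\ref{kernel-orbit}. Then $\widetilde B=\widehat B\circ\widetilde Q$, so both the formula $\widetilde B(\delta_x\otimes\delta_y+\mathcal Z_{X,Y})=B(x,y)$ and the norm identity are inherited immediately. In that route, both isometries rest on $\mathcal Z_{X,Y}=\ker Q$.

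You instead build the first isometry intrinsically, as the coinvariant universal property, i.e.\ the bilinear analogue of Theorem~\ref{thm:universal-character}. You factor the two-Lipschitz linearization $B_L$ through the quotient and use the fixed-point characterization for surjectivity, so $X\widehat\otimes_\pi Y$ enters only in the second step. This is longer, and it relies on the cited surjective isometry $\BLipz(X,Y;E)\cong\mathcal L(\mathscr F_{X,Y},E)$ and on $\|B\|_{\operatorname{BLip}}=\|B\|$.

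It does show more, though: your first isometry does not actually need Theorem~\ref{kernel-orbit}. You pass through condition (iv) of Corollary~\ref{cor:bilinear-characterizations} and $\ker Q$. But condition (iii), $T_L\circ\Sigma_{(a,b)}=T_L$, already says exactly that $T_L$ vanishes on $\mathcal Z_{X,Y}$, because $\mathcal Z_{X,Y}$ is by definition the closed span of the orbit differences $\Sigma_{(a,b)}u-u$. So in your organization the computation $\mathcal Z_{X,Y}=\ker Q$ is needed only for the identification with $\mathcal L(X\widehat\otimes_\pi Y,E)$. Your closing consistency check confirms that the composite is the classical correspondence the paper starts from.
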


\begin{proof}
Combining Theorem~\ref{kernel-orbit} with the universal
property of the completed projective tensor product the result follows.
\end{proof}

%================================================
\subsubsection{Amenable averaging onto the bilinear mappings}
%================================================

The preceding fixed-point characterization gives a
group-theoretic interpretation of the projection constructed
by Mandal in \cite{AM1}.

Let $X$, $Y$, and $E$ be nonzero real Banach spaces, with
$E$ a dual space. Since the additive group $X\times Y$
is abelian, it admits a contractive translation-invariant
$E$-valued mean
$M\colon\ell_\infty(X\times Y,E)\longrightarrow E$
that fixes constant functions. For $T\in\BLipz(X,Y;E)$,
the projection in \cite{AM1} is defined by
$P(T)(x,y)=M(\phi_T^{x,y})$,
where
$
\phi_T^{x,y}(a,b)
=T(a+x,b+y)-T(a+x,b)-T(a,b+y)+T(a,b)$.
This function is bounded, since
$\|\phi_T^{x,y}(a,b)\|
\leq\|T\|_{\operatorname{BLip}}\|x\|\|y\|$.

Then from \eqref{rho-two-lip}, it follows
$\phi_T^{x,y}(a,b)
=
\bigl(\rho_{(a,b)}(T)\bigr)(x,y)$.
Thus the projection admits the orbit-averaging formula
\begin{equation}\label{P-bilinear}
P(T)(x,y)
=
M_{(a,b)}
\left[ (a,b) \mapsto
\bigl(\rho_{(a,b)}(T)\bigr)(x,y)
\right].
\end{equation}
Here the subscript indicates the variable to which the
mean is applied.

We recall the following result from \cite{AM1}.

\begin{theorem}[\cite{AM1}]\label{bilinear-projection}
Let $X$, $Y$, and $E$ be nonzero real Banach spaces,
and suppose that $E$ is a dual space. The operator
$P$ defined by \eqref{P-bilinear} is a norm-one projection
from $\BLipz(X,Y;E)$ onto $\Blin(X,Y;E)$.
\end{theorem}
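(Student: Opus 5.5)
We prove the three assertions in the order they are needed: $P(T)$ is two-Lipschitz with $\|P(T)\|_{\operatorname{BLip}}\le\|T\|_{\operatorname{BLip}}$, $P(T)$ is a fixed point of $\rho$, and $P$ is the identity on $\Blin(X,Y;E)$. Theorem~\ref{bilinear-fixed} then identifies the range as $\Blin(X,Y;E)$.

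First, $P(T)$ vanishes on the coordinate axes, since $\phi_T^{x,0}\equiv0\equiv\phi_T^{0,y}$ because $T$ vanishes on the axes. For the estimate, we use that $\rho_{(a,b)}$ is linear and isometric (Proposition~\ref{prop:two-translation-action}) and that $M$ is linear. These give
\[
\Delta\bigl(P(T)\bigr)(x,x';y,y')=M_{(a,b)}\bigl[\Delta\bigl(\rho_{(a,b)}(T)\bigr)(x,x';y,y')\bigr].
\]
The integrand is bounded in norm by $\|\rho_{(a,b)}(T)\|_{\operatorname{BLip}}\|x-x'\|\|y-y'\|=\|T\|_{\operatorname{BLip}}\|x-x'\|\|y-y'\|$. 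Contractivity of $M$ then yields $\|P(T)\|_{\operatorname{BLip}}\le\|T\|_{\operatorname{BLip}}$, so $P$ is a linear contraction on $\BLipz(X,Y;E)$.

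Next we check invariance. Fix $(c,d)$ and compute $\rho_{(c,d)}(P(T))(x,y)$ by expanding \eqref{rho-two-lip}. This is a four-term combination of values of $P(T)$, hence $M$ applied to the same combination of $\rho_{(a,b)}(T)$-values. That combination equals $\bigl(\rho_{(c,d)}\rho_{(a,b)}(T)\bigr)(x,y)=\bigl(\rho_{(a+c,b+d)}(T)\bigr)(x,y)$ by the homomorphism property. Translation invariance of $M$ removes the shift $(c,d)$, so $\rho_{(c,d)}(P(T))=P(T)$. Hence $P(T)\in\Fix(\rho)=\Blin(X,Y;E)$ by Theorem~\ref{bilinear-fixed}.

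Finally, if $B$ is bilinear then $\rho_{(a,b)}(B)=B$, so the averaged function is the constant $B(x,y)$. Since $M$ fixes constants, $P(B)=B$. Thus $P$ is a projection onto $\Blin(X,Y;E)$ with $\|P\|\le1$. As $X,Y$ are nonzero, there is a nonzero bounded bilinear map, for instance $(x,y)\mapsto x^*(x)y^*(y)e$, so $\|P\|=1$.

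The main obstacle is the second step: we must make sure the translation invariance of $M$ matches the order of composition. Since $X\times Y$ is abelian, $\rho_{(c,d)}\rho_{(a,b)}=\rho_{(a,b)}\rho_{(c,d)}$, so left versus right invariance causes no issue. We also need $M$ to commute with evaluation of the finite linear combination. This holds because $M$ is linear on $\ell_\infty(X\times Y,E)$, and each shifted function is bounded by the estimate $\|\phi_T^{x,y}(a,b)\|\le\|T\|_{\operatorname{BLip}}\|x\|\|y\|$.
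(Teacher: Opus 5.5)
Your proof is correct. The paper does not prove this statement itself. It defers to \cite{AM1} and only observes afterwards that, by Theorem~\ref{bilinear-fixed}, $\operatorname{ran}(P)=\Blin(X,Y;E)=\Fix(\rho)$.

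Your argument gives a self-contained proof inside the paper's own framework. It follows the same three-step pattern as the proof of Theorem~\ref{thm:ph-projection}:
\begin{enumerate}
\item Linearity and contractivity of $M$ give $\|P(T)\|_{\operatorname{BLip}}\le\|T\|_{\operatorname{BLip}}$.
\item The homomorphism identity $\rho_{(c,d)}\rho_{(a,b)}=\rho_{(a+c,b+d)}$ from Proposition~\ref{prop:two-translation-action}, together with translation invariance of $M$, places $P(T)$ in $\Fix(\rho)$.
\item Fixed points have constant orbit functions, and $M$ preserves constants.
\end{enumerate}
Theorem~\ref{bilinear-fixed} then turns fixed points into bounded bilinear maps.

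A direct route that avoids the representation would check additivity by hand. For instance, one uses the identity $\phi_T^{x+x',y}(a,b)=\phi_T^{x,y}(a+x',b)+\phi_T^{x',y}(a,b)$ and invariance of $M$ under the shift $(x',0)$, then finishes with the continuity argument. That is essentially the same computation written out coordinate by coordinate. What your version adds is that it actually justifies the paper's claim that the projection of \cite{AM1} is invariant-mean averaging onto $\Fix(\rho)$, rather than just noting that the two ranges coincide.

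Two small remarks:
\begin{enumerate}
\item The dual-space hypothesis on $E$ is used only to guarantee that $M$ exists, which you, like the paper, take as given.
\item $\phi_T^{x,0}\equiv0\equiv\phi_T^{0,y}$ holds by pure cancellation for any $T$, not because $T$ vanishes on the axes.
\end{enumerate}
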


We refer to \cite{AM1} for the proof. By
Theorem~\ref{bilinear-fixed}, it follows that
$\operatorname{ran}(P)=\Blin(X,Y;E)=\Fix(\rho)$
Consequently, the construction of \cite{AM1} is precisely
invariant-mean averaging onto the fixed-point space of
the recentered translation representation. For the same
choice of mean, the two formulas define the same projection.
% Continue with Theorem~\ref{bilinear-fixed}.

%================================================
\subsection{Bidual realizations of the translation and dilation quotients}
%================================================

The quotient identifications obtained above, combined
with Proposition~\ref{prop:amenable-coinvariants}, yield
a common bidual realization for the three constructions.

\begin{corollary}\label{cor:applications-bidual}
Let $X$ and $Y$ be real Banach spaces. There exist
linear isometric embeddings
$$\F^{ph}(X)\longrightarrow\F(X)^{**},~~X\longrightarrow\F(X)^{**},
~~\mbox{and}~~
X\widehat\otimes_\pi Y
\longrightarrow
\bigl(\F(X)\widehat\otimes_\pi\F(Y)\bigr)^{**}.$$
\end{corollary}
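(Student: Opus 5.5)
The plan is to apply Proposition~\ref{prop:amenable-coinvariants} three times, once for each of the three isometric representations built in Section~\ref{sec:applications}. Each time, the abstract coinvariant space $X/\mathcal D_V$ will be replaced by the concrete quotient identified earlier. In every case the acting group is abelian: $(0,\infty)$ under multiplication, $(X,+)$, or $X\times Y$. Abelian groups are amenable as discrete groups, so the hypothesis of Proposition~\ref{prop:amenable-coinvariants} holds without any continuity requirement.

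\emph{Dilation case.} Take the Banach space $\F(X)$ and the representation $V_r\delta_x=\frac1r\delta_{rx}$ from \eqref{eq:dilation-action-free}. Its orbit-difference subspace is $\mathcal Y_{G,\chi}$, and by \eqref{eq:Y-ph} and \eqref{eq:equivariant-free-ph} we have $\F(X)/\mathcal Y_{G,\chi}\cong\F^{ph}(X)$. Composing the inverse of this isometry with $J_V$ gives the first embedding; equivalently, this is Corollary~\ref{cor:equivariant-bidual} with $G=(0,\infty)$. If $X=\{0\}$, everything is trivial.

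\emph{Translation case.} Use the representation $S$ of $(X,+)$ on $\F(X)$ from Proposition~\ref{prop:Sa-action}. By \eqref{eq:translation-orbit-space} and Remark~\ref{rem:kernel-orbit}, its orbit-difference space is $\mathcal Y_X=\ker\beta_X$. It remains to check that $\F(X)/\ker\beta_X\cong X$ isometrically. This holds because $\beta_X$ is a surjective contraction whose adjoint is an isometry onto $X^*$, and that makes $\beta_X$ a metric quotient map. Explicitly, for $\mu\in\F(X)$ one computes $\|q(\mu)\|=\sup\{|x^*(\beta_X\mu)|:x^*\in B_{X^*}\}=\|\beta_X\mu\|$, using $(\F(X)/\ker\beta_X)^*=(\ker\beta_X)^\perp=\operatorname{ran}\beta_X^*$. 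Composing with $J_S$ then yields $X\to\F(X)^{**}$.

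\emph{Tensor case.} Use $\Sigma_{(a,b)}$ on $\mathscr F_{X,Y}$ from Proposition~\ref{dual-actions}. Its orbit-difference space is $\mathcal Z_{X,Y}$, and Theorem~\ref{kernel-orbit} gives $\mathscr F_{X,Y}/\mathcal Z_{X,Y}\cong X\widehat\otimes_\pi Y$ via $\widetilde Q$. So $J_\Sigma\circ\widetilde Q^{-1}$ is the third embedding.

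The only nonformal point I expect is the metric-quotient property of $\beta_X$ used in the translation case, that is, the isometric identification $\F(X)/\ker\beta_X\cong X$. I would settle it by the duality computation above, relying on the fact stated in the paper that $\beta_X^*$ is an isometry with range $X^*$. Everything else is composition of a linear isometry with the embedding $J_V$ from Proposition~\ref{prop:amenable-coinvariants}.
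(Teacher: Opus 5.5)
Your proposal is correct and follows the paper's proof. You apply Proposition~\ref{prop:amenable-coinvariants} three times, with the abelian groups regarded as discrete, to $V_r$, $S_a$ and $\Sigma_{(a,b)}$, and compose with the identifications $\F(X)/\mathcal Y_{G,\chi}\cong\F^{ph}(X)$, $\F(X)/\ker\beta_X\cong X$ and Theorem~\ref{kernel-orbit}. Your duality check that $\beta_X$ is a metric quotient map makes explicit a step the paper only asserts; it also follows directly from $\beta_X\delta_x=x$ with $\|\delta_x\|=\|x\|$.
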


\begin{proof}
For the first assertion, we apply
Proposition~\ref{prop:amenable-coinvariants} to the
normalized dilation representation
$V_r\delta_x=r^{-1}\delta_{rx},$ for any $ r>0$.
Its coinvariant space is $\F^{ph}(X)$.

For the second assertion, we again apply the same proposition
to the recentered translation representation
$S_a\delta_x=\delta_{x+a}-\delta_a$.
Its orbit-difference subspace is $\ker(\beta_X)$,
and its coinvariant space is canonically isometric
to $X$.

Finally, apply the proposition to
$\Sigma_{(a,b)}=S_a\widehat\otimes_\pi R_b$ on
$\F(X)\widehat\otimes_\pi\F(Y)$.
By Theorem~\ref{kernel-orbit}, its coinvariant space
is canonically isometric to $X\widehat\otimes_\pi Y$.
All the acting groups are abelian and are regarded
as discrete.
\end{proof}

\begin{remark}
These embeddings depend on the choice of invariant mean.
They identify the quotient spaces with subspaces of the
corresponding biduals; they do not, by themselves,
identify them with complemented subspaces of the
original linearization spaces.
\end{remark}

\begin{remark}[Compact groups] \label{rem:compact group}
Suppose that a compact group $K$ acts strongly continuously
by surjective linear isometries $V_k$ on a Banach space $X$.
Normalized Haar averaging defines a contractive projection
\[
R_Ku=\int_K V_ku\,d\nu(k)
\]
on $X$. Its kernel is the orbit-difference subspace
$\mathcal D_V$, and consequently
$X/\mathcal D_V\cong\Fix(V)$.

For a continuous point-preserving isometric action on
$M$, this recovers the complemented realization
$\F(M/K)\cong\Fix(V)\subseteq\F(M)$
from \cite[Corollary~4.6]{PILFSIBGA}.

If a compact group acts continuously by similarities
on a nontrivial metric space, its similarity character
is continuous and must be trivial: its image is a
compact subgroup of $(0,\infty)$.
Thus genuinely nontrivial similarity characters require
a setting beyond this compact-group case.
\end{remark}

%================================================
\section{A unified picture}
%================================================

The preceding constructions are instances of a common
fixed-point and orbit-quotient framework. Let $E$ be a
real Banach space and let
\[
V\colon G\longrightarrow\operatorname{Isom}_{\mathrm{lin}}(E)
\]
be a representation. We define
$\mathcal D_V
=
\overline{\operatorname{span}}
\{V_gu-u:g\in G,\ u\in E\}$.
The associated dual representation is
$U_g=V_{g^{-1}}^*$, and the canonical quotient duality gives
\[
(E/\mathcal D_V)^*
\cong
\mathcal D_V^\perp
=
\Fix(U).
\]
Thus the fixed-point space on the dual side has the
coinvariant space $E/\mathcal D_V$ as a canonical predual.
This identification does not require amenability.

For the scalar-valued spaces considered in this article,
the resulting correspondences are summarized below.
Here $\mathscr F_{X,Y}=\F(X)\widehat\otimes_\pi\F(Y)$,
and $M/G$ denotes the metric orbit quotient.

\[
\begin{array}{c|c|c}
\text{Representation on }E
&
\text{Dual fixed-point space}
&
E/\mathcal D_V
\\ \hline
V_g\delta_x=\delta_{gx}
&
\Lip_0^G(M)
&
\F(M/G)
\\[3pt]
V_g\delta_x=\chi(g)^{-1}\delta_{gx}
&
\Lip_0^{G,\chi}(M)
&
\F^{G,\chi}(M)
\\[3pt]
V_r\delta_x=r^{-1}\delta_{rx}
&
\Lip_0^{ph}(X)
&
\F^{ph}(X)
\\[3pt]
S_a\delta_x=\delta_{x+a}-\delta_a
&
X^*
&
X
\\[3pt]
\Sigma_{(a,b)}=S_a\widehat\otimes_\pi R_b
&
\Blin(X,Y;\mathbb R)
&
X\widehat\otimes_\pi Y
\end{array}
\]

In the first two rows, the metric actions fix the
distinguished point. The first row is the special case
$\chi\equiv1$ of the second, while the third corresponds
to the dilation action with $\chi(r)=r$.

The defining relations in these quotients encode the
corresponding conditions on mappings. Invariance and
character-equivariance are expressed, respectively, by
$\delta_{gx}-\delta_x,
~\mbox{and}~\delta_{gx}-\chi(g)\delta_x$.
Positive homogeneity is expressed by
$\delta_{rx}-r\delta_x$,
whereas the recentered translation relations
$\delta_{x+a}-\delta_x-\delta_a$
encode additivity. In the two-variable setting, the
relations
$\Sigma_{(a,b)}u-u$
impose linearity in each variable and yield the completed
projective tensor product.

These identifications also explain the distinction
between metric orbit quotients and the quotients arising
from recentered translations. The translation action of
$X$ on itself has only one ordinary orbit, whereas its
recentered representation on $\F(X)$ has coinvariant
space canonically isometric to $X$.

When $G$ is amenable and regarded as discrete, the averaging
construction of C\'uth and Doucha \cite[Lemma~3.2]{PILFSIBGA},
in the form of Proposition~\ref{prop:amenable-coinvariants},
provides additional structure:
$E/\mathcal D_V\longrightarrow E^{**}$
admits a linear isometric embedding, and $\Fix(U)$ is
the range of a contractive projection on $A^*$. Applied
to the representations above, this accounts for the
bidual realizations of the equivariant free spaces and
the translation quotients, as well as the scalar-valued
averaging projections.

For strongly continuous representations of compact groups,
Haar averaging takes values in $E$ itself and gives $E/\mathcal D_V\cong\Fix(V)\subseteq E$
linearly isometrically, with $\Fix(V)$ contractively
complemented in $E$. In the isometric metric setting,
this recovers the complemented realization of
$\F(M/G)$ established in \cite[Corollary~4.6]{PILFSIBGA}.

The framework therefore separates three conclusions:
the quotient description of the dual fixed-point space,
which is independent of amenability; its realization
inside the bidual through amenable averaging; and a
complemented realization inside the original space,
which requires additional hypotheses.

%================================================
%\noindent\textbf{Declaration of generative AI and AI-assisted technologies in the manuscript preparation process.}
%During the final stage of manuscript preparation, the author used an \textbf{AI assistant}, namely ChatGPT Plus for proofreading and assistance with basic facts and references concerning amenability in Sections~3.4 and~4.4. AI assistance was also used in connecting one of the results of \cite{PILFSIBGA} with the framework developed in the present article, particularly through Remark~\ref{rem:compact group}.
%The author independently verified all mathematical statements and proofs, reviewed and revised the AI-assisted content, and takes full responsibility for the final version of the article.

\noindent\textbf{Acknowledgements}
The author thanks Professor Anil Kumar Karn for discussions during his doctoral studies on some of the Lipschitz-space results discussed here as applications.

The author declares that he has no competing interests.

\noindent\textbf{Data availability.}
No datasets were generated or analysed during this study.

\end{document}